\definecolor{black}{rgb}{0.0, 0.0, 0.0}
\definecolor{red}{rgb}{1.0, 0.5, 0.5}
\newcommand{\margnote}[1]{
\ifthenelse{\boolean{shownotes}}%
{\marginpar{\raggedright\tiny\texttt{#1}}}%
{}%
}
\newcommand{\hole}[1]{
\ifthenelse{\boolean{shownotes}}%
{\begin{center} \fbox{ \rule {.25cm}{0cm} \rule[-.1cm]{0cm}{.4cm}
\parbox{.85\textwidth}{\begin{center} \texttt{#1}\end{center}} \rule
{.25cm}{0cm}}\end{center}} {} }
\title[Modulated energy estimates for singular kernels]{Modulated energy estimates for singular kernels and their applications to asymptotic analyses for kinetic equations}
\author[Choi]{Young-Pil Choi}
\address[Young-Pil Choi]{\newline Department of Mathematics\newline
Yonsei University, 50 Yonsei-Ro, Seodaemun-Gu, Seoul 03722, Republic of Korea}
\email{ypchoi@yonsei.ac.kr}
\author[Jung]{Jinwook Jung}
\address[Jinwook Jung]{\newline Research Institute of Basic Sciences \newline Seoul National University, Seoul  08826, Republic of Korea}
\email{warp100@snu.ac.kr}
\numberwithin{equation}{section}
\newtheorem{theorem}{Theorem}[section]
\newtheorem{lemma}{Lemma}[section]
\newtheorem{proposition}{Proposition}[section]
\newtheorem{remark}{Remark}[section]
\newcommand{\R}{\mathbb R}
\newcommand{\om}{\Omega}
\newcommand{\bbn}{\mathbb N}
\newcommand{\N}{\mathbb N}
\newcommand{\mc}{\mathcal C}
\newcommand{\K}{\mathcal K}
\newcommand{\bq}{\begin{equation}}
\newcommand{\eq}{\end{equation}}
\newcommand{\e}{\varepsilon}
\newcommand{\lt}{\left}
\newcommand{\rt}{\right}
\newcommand{\pa}{\partial}
\newcommand{\mi}{\mathcal{I}}
\newcommand{\mh}{\mathcal{H}}
\newcommand{\me}{\mathcal{E}}
\newcommand{\mf}{\mathcal{F}}
\newcommand{\md}{\mathcal{D}}
\newcommand{\intr}{\int_{\R^d}}
\newcommand{\intrr}{\iint_{\R^d \times \R^d}}
\newcommand{\sfI}{\mathsf{I}}
\newcommand{\sfJ}{\mathsf{J}}
\newcommand{\sfK}{\mathsf{K}}
\def\d{\mathrm{d}}
\begin{document}
%%%%%%%%%%%%%%%%
\allowdisplaybreaks

\date{\today}

%\subjclass{2010 MSC: 	35Q35,	35Q92, 76T10} 
\keywords{Modulated energy, singular kernels, commutator estimate, small inertia limit, hydrodynamic limit.}

\begin{abstract}In this paper, we provide modulated interaction energy estimates for the kernel $K(x) = |x|^{-\alpha}$ with $\alpha \in (0,d)$, and its applications to quantified asymptotic analyses for kinetic equations. The proof relies on a dimension extension argument for an elliptic operator and its commutator estimates. For the applications, we first discuss the quantified small inertia limit of kinetic equation with singular nonlocal interactions. The aggregation equations with singular interaction kernels are rigorously derived. We also study the rigorous quantified hydrodynamic limit of the kinetic equation to derive the isothermal Euler or pressureless Euler system with the nonlocal singular interactions forces. 
\end{abstract}

\maketitle \centerline{\date}

%\tableofcontents

%%%%%%%%%%%%%%%%%%%%%%%%%%%%%%%%%%%%%%%%%%%%%%%%%%%%%%%%%%%%%%%%%%%%%%%%%%%%%%%%%5
%
%
%                        Section: Introduction 
%
%
%%%%%%%%%%%%%%%%%%%%%%%%%%%%%%%%%%%%%%%%%%%%%%%%%%%%%%%%%%%%%%%%%%%%%%%%%%%%%%%%%
\section{Introduction}\label{sec:intro}

This paper is devoted to estimates of modulated interaction energy with singular kernels and its applications to asymptotic analyses for kinetic equations. To be precise, we are interested in the  singular kernel $K: \R^d \to \R_+$, which has the form of  
\bq\label{kernel}
K(x) = \frac{1}{|x|^\alpha} \quad \mbox{with } \alpha \in (0,d).
\eq
In this setting, $K$ is called Coulomb potential when $\alpha = d-2$ with $d\geq 3$ and the case $0 \vee (d-2) \leq \alpha < d$ is known as Riesz potential.   The singular kernel $K$ is usually taken into account the nonlocal interaction forces in charge of the repulsion or attraction between interacting particles. The interaction energy through the singular kernel $K$ is typically given by 
\[
\intr \mi_K(\rho)\,dx := \frac12 \intr \rho (K \star \rho)\,dx
\]
for some density function $\rho$, and thus its modulated interaction energy is naturally defined as
\[
\intr \mi_K(\rho|\bar\rho)\,dx := \intr \lt(\mi_K(\rho) - \mi_K(\bar\rho) - (D_\rho \mi_K )(\bar\rho)(\rho - \bar\rho)\rt) dx = \frac12 \intr (\rho - \bar\rho) K \star (\rho - \bar\rho)\,dx.
\]

%\subsection{Main result}
The main purpose of this work is to estimate the above modulated interaction energy associated with the continuity equation.
\begin{theorem}\label{thm_main}
Let $T>0$ and $K$ be given by \eqref{kernel}. Suppose that the pairs $(\bar\rho, \bar u)$ and $(\rho,u)$ satisfy the followings:
\begin{enumerate}
\item[(i)]
$(\bar\rho, \bar u)$ and $(\rho,u)$ satisfy the continuity equations in the sense of distribution:
\[
\pa_t \bar\rho + \nabla \cdot (\bar\rho \bar u) =0 \quad \mbox{and} \quad \pa_t \rho + \nabla \cdot (\rho u) =0,
\]
\item[(ii)]
$(\bar\rho, \bar u)$ and $(\rho,u)$ satisfy the energy inequality:
\[
\sup_{0\le t \le T}\lt( \intr \bar\rho |\bar u|^2\,dx + \intr \bar\rho K\star\bar\rho\,dx\rt) <\infty, \quad \sup_{0\le t \le T}\lt( \intr \rho | u|^2\,dx + \intr \rho K\star\rho\,dx\rt) <\infty,
\]
\item[(iii)]
$\bar\rho, \rho\in \mc(0,T; L^1(\R^d))$, $\nabla u\in L^\infty(\R^d \times (0,T))$ and if $\alpha<d-2$, 
\bq\label{cond_u}
\left\{\begin{array}{lcl}
\nabla^{[(d-\alpha)/2]+1} u\in L^\infty(0,T;L^{\frac{d}{[(d-\alpha)/2]}}(\R^d)) & \mbox{if} & \alpha\in(0,d-2)\setminus(d-2\N),\\
\nabla^{\frac{d-\alpha}{2}} u \in L^\infty(0,T;L^{\frac{2d}{d-\alpha-2}}(\R^d)) & \mbox{if} & \alpha\equiv d \ \emph{ mod } 2, 
\end{array}\rt.
\eq
where $d-2\N := \{d-2n \ : \ n \in \N\}$ and $[ \,\cdot \, ]$ denotes the floor function. 
\end{enumerate}
Then we have
\bq\label{res_thm}
\frac12\frac{d}{dt}\intr (\rho -\bar\rho) K \star (\rho - \bar\rho)\,dx \leq \intr \bar\rho(u - \bar u) \cdot \nabla K \star (\rho - \bar\rho)\,dx + C\intr (\rho - \bar\rho) K \star (\rho - \bar\rho)\,dx
\eq
for $t \in [0,T)$ and some $C>0$ which depends only on $\alpha$, $d$ and $\|\nabla u\|_{L^\infty(\R^d \times (0,T))}$, and  if $d < \alpha-2$, additionally  
\[
\lt\{\begin{array}{lcl}
\|\nabla^{[(d-\alpha)/2]+1} u\|_{L^\infty(0,T;L^{\frac{d}{[(d-\alpha)/2]-1}})} & \mbox{if} & \alpha\in(0,d-2)\setminus (d-2\N),\\
\|\nabla^{(d-\alpha)/2}  u\|_{L^\infty(0,T;L^{\frac{2d}{d-\alpha-2}})} & \mbox{if} & \alpha\equiv d \ \emph{mod } 2.
\end{array}\rt.\]
\end{theorem}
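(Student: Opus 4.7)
Set $R := \rho - \bar\rho$ and $h := K\star R$, so that the modulated energy reads $E(t):=\tfrac12\intr R\,h\,dx$. Using the two continuity equations with the splitting $\rho u - \bar\rho\bar u = \bar\rho(u-\bar u) + Ru$, the symmetry of $K$, and one integration by parts in $x$, the plan is first to derive the exact identity
\[
\frac{d}{dt}E \;=\; \intr \bar\rho(u-\bar u)\cdot\nabla h\,dx \;+\; \underbrace{\intr R\,u\cdot\nabla h\,dx}_{=:\,\mathcal C},
\]
which reduces everything to the commutator bound $|\mathcal C|\le C\,E$. Writing $s:=(d-\alpha)/2\in(0,d/2)$, the two structural facts I would exploit are the Riesz identity $(-\Delta)^s h=c_{s,d}R$ and the Hardy-Littlewood-Sobolev equivalence $2E=c_{s,d}^{-1}\|(-\Delta)^{s/2}h\|_{L^2}^2$.

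For the ``mildly singular'' regime $\alpha\in[d-2,d)$, equivalently $s\in(0,1]$, I would invoke the Caffarelli-Silvestre extension: let $H(x,y)$ solve $\nabla\!\cdot\!(y^{1-2s}\nabla H)=0$ on $\R^{d+1}_+$ with $H|_{y=0}=h$, so that, up to constants, $R = -\lim_{y\downarrow 0} y^{1-2s}\partial_y H$ and $2E = c_{s,d}^{-1}\int_0^\infty\!\intr y^{1-2s}|\nabla H|^2\,dx\,dy$. Extending $u$ trivially as $\tilde u(x,y):=u(x)$, rewriting $\mathcal C$ as a trace limit at $y=\varepsilon$, and integrating by parts in $y$ (using the bulk equation $\partial_y(y^{1-2s}\partial_y H) = -y^{1-2s}\Delta_x H$ to dispose of the normal derivative) yields a ``stress-energy'' identity
\[
\mathcal C \;=\; c_{s,d}\!\int_0^\infty\!\!\intr y^{1-2s}\Bigl(-\partial_i u_j\,\partial_i H\,\partial_j H + \tfrac12(\nabla\!\cdot\!u)|\nabla H|^2\Bigr)\,dx\,dy,
\]
whose integrand is pointwise dominated by $\|\nabla u\|_{L^\infty}\,y^{1-2s}|\nabla H|^2$, giving $|\mathcal C|\le C\|\nabla u\|_{L^\infty}E$. (When $s=1$ this collapses to the classical one-line integration by parts in $\R^d$ used in the Newtonian setting.)

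For the ``more singular'' regime $\alpha\in(0,d-2)$, i.e.\ $s>1$, the first-order extension is insufficient. If $s\in\N$ (the case $\alpha\equiv d\!\mod 2$), I would use the local representation $R=c_s(-\Delta)^s h$ and perform $s$ iterated integrations by parts to land on
\[
\mathcal C \;=\; c_s\sum_{k=0}^{s}\binom{s}{k}\intr \nabla^s h\cdot\bigl(\nabla^k u\otimes\nabla^{s+1-k}h\bigr)\,dx;
\]
the terms $k\le 1$ are handled by one further integration by parts (producing only $\|\nabla u\|_{L^\infty}$), while for $k\ge 2$ Hölder's inequality with exponents $\bigl(2,\,d/(k-1),\,2d/(d-2(k-1))\bigr)$ together with the Sobolev embedding $\dot H^{k-1}(\R^d)\hookrightarrow L^{2d/(d-2(k-1))}$ gives a bound by $\|\nabla^k u\|_{L^{d/(k-1)}}E$, the worst case $k=s$ producing exactly the space $L^{2d/(d-\alpha-2)}$ appearing in \eqref{cond_u}. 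If $s\notin\N$, I would first peel off $[s]$ local Laplacians by $[s]$ integrations by parts and then run the Caffarelli-Silvestre argument of the previous paragraph on the residual fractional part $(-\Delta)^{\{s\}}$, $\{s\}\in(0,1)$; the $[s]+1$ derivatives of $u$ in the space $L^{d/[s]}$ demanded in \eqref{cond_u} arise from the combined Hölder/Sobolev chain in exactly the same way.

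The principal technical obstacle is this last step. One must (a) track every Leibniz term of the repeated IBP and show that each is absorbed by a single norm $\|\nabla^k u\|_{L^{p_k}}E$ via a matching Sobolev embedding; (b) verify that all boundary contributions at $y=0^+$ and at spatial infinity vanish in the weighted Muckenhoupt setting $y^{1-2s}\,dy\,dx$, so that the divergence theorem is legitimately applicable; and (c) in the non-integer case, splice the local integrations by parts with the degenerate elliptic extension without losing the weight $y^{1-2s}$ or generating lower-order fractional norms that are not controlled by $E$. The sharp Sobolev exponents in \eqref{cond_u} are pinned down by exactly this matching.
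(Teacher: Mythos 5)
Your overall blueprint---reduce to the commutator bound $|\mathcal C|\le CE$ via the continuity equations, then exploit the Riesz identity $(-\Delta)^s(K\star R)=c_{s,d}R$ with $s=(d-\alpha)/2$---is exactly the reduction made in Remark~\ref{rmk_ip}. Your treatment of the two ``easy'' ranges matches the literature and the paper: for $s\in(0,1]$ (i.e.\ $\alpha\in[d-2,d)$) the single Caffarelli--Silvestre stress-energy identity is the argument of Duerinckx and Serfaty which the paper quotes without repeating; for $s\in\N$ (i.e.\ $\alpha\equiv d\pmod 2$) your expansion of $(-\Delta)^{s/2}(u\cdot\nabla h)$ via Leibniz and slice-wise H\"older/Sobolev is, up to presentation, the paper's Section~\ref{sec_subCoul2} and Lemma~\ref{Moser_lem}, and you land on the correct endpoint space $L^{2d/(d-\alpha-2)}$.

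The genuine gap is in the third case, $\alpha\in(0,d-2)\setminus(d-2\N)$, which is the main technical contribution of the theorem. As literally stated---``peel off $[s]$ local Laplacians, then run the CS argument on the residual $(-\Delta)^{\{s\}}$''---the proposal does not close. If all $[s]$ Laplacians are transferred onto $u\cdot\nabla h$, the top Leibniz term is $\nabla^{2[s]}u$, which already exceeds the available regularity $\nabla^{[s]+1}u\in L^{d/[s]}$ as soon as $[s]\ge 2$; an additional integration by parts only shaves off one order and does not repair the mismatch beyond $[s]=2$. If instead one tries a symmetric integer split $\Delta^{[s]}=\Delta^p\Delta^q$ with $p+q=[s]$, the constraints ``$u$-factor uses at most $[s]+1$ derivatives'' and ``$h$-factor controlled by $E^{1/2}=\|h\|_{\dot H^s}$'' force $p,q\le[s]/2$, which is incompatible with $p+q=[s]$ whenever $[s]$ is odd (and just barely feasible for even $[s]$). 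In other words, because $s=[s]+\{s\}$ is non-integer, the \emph{balanced} split $s=\tfrac s2+\tfrac s2$ that the argument really needs cannot be achieved by local Laplacians alone; the fractional half-order must be distributed to both sides of the pairing, and your proposal only assigns all of $(-\Delta)^{\{s\}}$ to one.

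The paper resolves this precisely by not separating the integer and fractional parts of $(-\Delta)^s$ at all. It introduces the weighted elliptic operator $\Delta_\gamma=\Delta_{(x,\xi)}+\tfrac{\gamma}{\xi}\partial_\xi$ on $\R^d\times\R_+$ with $\gamma=1-2\{s\}$, so that $\xi^\gamma(-\Delta_\gamma)^{j+1}\K=c\,R\otimes\delta_0$ with $j=[s]$, and then splits $\Delta_\gamma^{j+1}$ \emph{symmetrically} in the extended variable ($\Delta_\gamma^m\cdot\Delta_\gamma^m$ when $j+1=2m$, nearly so when $j+1=2m+1$). This gives at most $[s]+1$ derivatives on $(u,0)$ in the Leibniz expansion (Lemma~\ref{lem_leib}) while the companion factor is controlled by the modulated energy through the weighted elliptic identities of Lemmas~\ref{lem_dg2}--\ref{lem_tech2}. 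Making your hybrid close would require exactly this weighted-extension bookkeeping plus a fractional Kato--Ponce commutator for $(-\Delta)^{\{s\}}$, neither of which your sketch supplies; the obstacles you flag under (a) and (c) are real, and they are the heart of the proof rather than a technical afterthought.
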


\begin{remark}\label{rmk_ip} Note that 
\begin{align*}
\frac12\frac{d}{dt}\intr (\rho -\bar\rho) K \star (\rho - \bar\rho)\,dx &= \intr \pa_t (\rho -\bar\rho) K \star (\rho - \bar\rho)\,dx\cr
&=\intr (\rho u - \bar\rho \bar u) \cdot \nabla K \star (\rho - \bar\rho)\,dx\cr
&=  \intr \bar\rho(u - \bar u) \cdot \nabla K \star (\rho - \bar\rho)\,dx\cr
&\quad + \intr (\rho-\bar\rho) u \cdot \nabla K \star(\rho-\bar\rho)\,dx.
\end{align*}
This implies that to prove Theorem \ref{thm_main} it suffices to show 
\bq\label{ess}
\intr (\rho-\bar\rho) u \cdot \nabla K \star(\rho-\bar\rho)\,dx \leq C\intr (\rho - \bar\rho) K \star (\rho - \bar\rho)\,dx
\eq
for $C>0$ which depends only on $d$, $\|\nabla u\|_{L^\infty(\R^d \times (0,T))}$ and  if $d < \alpha-2$, additionally  
\[
\lt\{\begin{array}{lcl}
\|\nabla^{[(d-\alpha)/2]+1} u\|_{L^\infty(0,T;L^{\frac{d}{[(d-\alpha)/2]-1}})} & \mbox{if} & \alpha\in(0,d-2)\setminus (d-2\N),\\
\|\nabla^{(d-\alpha)/2}  u\|_{L^\infty(0,T;L^{\frac{2d}{d-\alpha-2}})} & \mbox{if} & \alpha\equiv d \ \emph{mod } 2.
\end{array}\rt.\]
We would like to point out that the right-hand side of the above is nonnegative since $K$ is positive definite. 
\end{remark}

\begin{remark}
Actually, the condition \eqref{cond_u} can be slightly relaxed to
\[
\lt\{\begin{array}{lcl}
&\nabla^{\lt[\frac{d-\alpha}{2}\rt]}u \in L^\infty(0,T;L^{\frac{d}{[(d-\alpha)/2]-1}}(\R^d)), \quad \Delta^{\lt[\frac{d-\alpha}{4}\rt]} u \in L^\infty(0,T;L^{\frac{d}{[(d-\alpha)/2]}}(\R^d))  \cr
&\mbox{if} \quad  0\vee(4m-2)<d-\alpha<4m, \quad m \in \N,\\
&\nabla^{\lt[\frac{d-\alpha}{2}\rt]}u \in L^\infty(0,T;L^{\frac{d}{[(d-\alpha)/2]-1}}(\R^d)), \quad \nabla\Delta^{\lt[\frac{d-\alpha}{4}\rt]} u \in L^\infty(0,T;L^{\frac{d}{[(d-\alpha)/2]}}(\R^d)) \cr
& \mbox{if} \quad 0\vee 4m<d-\alpha<4m+2, \quad m \in \N,\\
&\nabla^{\frac{d-\alpha}{2}} u \in L^\infty(0,T;L^{\frac{2d}{d-\alpha-2}}(\R^d)) \quad \mbox{if} \quad \alpha\equiv d \ \emph{ mod } 2.
\end{array}\rt.
\]
We simply used the Gagliardo--Nirenberg interpolation inequality \eqref{gn_ineq} to reduce the first two conditions above into $\nabla^{[(d-\alpha)/2]+1} u\in L^\infty(0,T;L^{\frac{d}{[(d-\alpha)/2]}}(\R^d))$ for simplicity. Moreover, we also note that the conditions on $u$ in Theorems \ref{thm_ktoc} and \ref{thm_hydro} can be substituted as above. For detail, see the proof of Theorem \ref{thm_main} in Section \ref{sec_subCoul}.
\end{remark}

The estimates of modulated energies and its variants have been applied to the study for the quantified mean-field limits \cite{BJWpre, Due16, JW18, NRSpre, Ser17, Ser20}, relaxation limit \cite{C21, CJpre1, LT13,LT17}, overdamped limit \cite{CTpre}, hydrodynamic limits \cite{CCJ21, HI21}. In a very recent paper \cite{NRSpre}, the modulated interaction energy estimate with the kernel $K$ given by \eqref{kernel} is discussed to study the mean-field limit from many interacting particle systems towards the continuity equation. 

%The main idea of the proof in \cite{NRSpre} is based on the .., and this allows to consider more general singular kernels, for instance, the kernel $|x|^{-\alpha} + \varphi(x)|x|^{-\tilde\alpha}$ with $\alpha \in (0,d)$, $\tilde\alpha \in [0, \alpha)$, and $\varphi\in \mc_c^\infty(\R^d)$. 

\subsection{Ideas of the proof}
The main idea of our strategy consists of two steps. We first interpret the singular kernel $K$ as a fundamental solution to certain partial differential equations in higher dimensions by dealing with dummy variables if necessary. This enables us to rewrite the left hand side of \eqref{ess}, and it eventually leads to commutator estimates. Let us explain it more specifically by dividing into two cases: $\alpha \in (0 \vee (d-2), d)$ and $\alpha \in (0, d-2)$.

The modulated interaction energy estimate for Riesz interaction potential case, i.e., \eqref{kernel} with $\alpha \in (0 \vee (d-2), d)$ is already discussed in \cite{Due16, Ser17,Ser20} based on the extension representation for the fractional Laplacian \cite{CaS07}. To be more specific, we notice that the convolution term $K \star \rho$ can be extended to be defined in $\R^d \times \R$ via
\[
\intr K ((x,\xi) - (y,0)) \rho(y)\,dy =: (K \star (\rho \otimes \delta_0))(x,\xi),
\]
where we denote
\[
K(x,\xi) = \frac{1}{|(x,\xi)|^\alpha}.
\]
Then taking $\gamma := \alpha + 1 - d \in (-1,1)$ yields that the extended interaction potential satisfies
\[
-\nabla_{(x,\xi)} \cdot \lt(\xi^\gamma \nabla_{(x,\xi)} K \star (\rho \otimes \delta_0) \rt) = c_{\alpha,d} \rho(x) \otimes \delta_0(\xi) \quad \mbox{on} \quad \R^d \times \R_+
\]
in the sense of distributions. Here $c_{\alpha,d}$ is the normalization constant explicitly given by
\[
c_{\alpha,d} = 2\alpha (2\pi)^{d/2} \Gamma\lt(\frac{\alpha+2-d}{2}\rt) \Gamma\lt(\frac{\alpha+2}{2}\rt)^{-1}
\]
for $\alpha \in (0 \vee (d-2), d)$. In particular, $K$ can be considered as a solution of 
\[
-\nabla_{(x,\xi)} \cdot \lt(\xi^\gamma  \nabla_{(x,\xi)} K \rt) = c_{\alpha,d} \delta_0 \quad \mbox{on} \quad \R^d \times \R_+.
\]
This interpretation plays a crucial role in the modulated interaction energy estimate. 

On the other hand, it is not clear how to apply the above argument for the mildly singular case than the Riesz one, i.e., $\alpha \in (0,d-2)$ with $d > 2$. The intuition for our main strategy stems from the following simple observation. Let us consider the interaction potential $K$ given by the fundamental solution to the biharmonic equation, that is, $K$ satisfies the following equation in the sense of distributions:
\bq\label{eq_bih}
(-\Delta)^2 K = \delta_0.
\eq
Then it can also be represented as
\[
-\Delta K = E, \quad -\Delta E = \delta_0.
\]
This shows that $E$ is the fundamental solution to the Laplace's equation, and thus,
\[
E(x) = \begin{cases}\displaystyle \frac{1}{(d-2)\omega_{d-1}} |x|^{2-d} \hspace{1cm} d\ge 3, \\[3mm]
\displaystyle -\frac{1}{2\pi} \log|x| \hspace{2.1cm}  d=2,  \end{cases}
\]
where $\omega_{d-1}$ denotes the area of the surface of the unit sphere $\mathbb{S}^{d-1}$ in $\R^d$. We then use the above observation to have that  $K$ can be represented as
\[
K(x) = \left\{\begin{array}{ll}\displaystyle \frac{1}{8\pi} |x|^2 \left(\log|x|-1\right) & d=2,\\[3mm]
\displaystyle-\frac{1}{4\omega_3}\log|x| & d=4,\\[3mm]
\displaystyle \frac{1}{2(d-2)(d-4)\omega_{d-1}}|x|^{4-d}, & d\neq 2,4.  \end{array}\right.
\]
This illustrates that the interaction potential $K(x) = |x|^{-(d-4)}$ can be interpreted as the solution to the biharmonic equation \eqref{eq_bih} up to a constant, and similarly as in the Coulomb potential case, this can be used for the estimate of modulated interaction energy when $\alpha = d-4$ with $d > 4$. From this simple observation, we expect that the potential $K$ of the form 

\[
K(x) = |x|^{-m}  \mbox{ with $m \in \N$}, \quad m<d  \quad \mbox{and}  \quad \mbox{$m\equiv d$ mod 2}
\]
can be interpreted as the solution to

\[
(-\Delta)^{\frac{d-m}{2}} K(x) = c_{m,d}\delta_0(x)
\]
in the sense of distributions, for some positive constant $c_{m,d}$. 
%Moreover, by applying the dimension extension argument, when $d-m \equiv 1$ mod 2, we also find
%\[
%(-\Delta_{(x,\xi)})^{\frac{d-m+1}{2}}K(x,\xi) = c_{m,d} \delta_0(x,\xi).
%\]
These eventually conclude that we can consider the kernel $K$ with $\alpha \in (0, d-2) \cap (d-2\N)$ as the solution to the polyharmonic equation. 

Let us now focus on the case $\alpha \in (0, d-2) \setminus \lt(d-2\N \rt)$ with $d>2$. For simple presentation of the idea, at the moment, we take into account the case $\alpha \in (0 \vee (d-4), d-2)$, inspired by the extension representation for the fractional Laplacian \cite{CaS07, Ypre}, we observe that $K = |x|^{-\alpha}$ with $\alpha \in (0 \vee (d-4), d-2)$ satisfies
 \[
 \nabla_{(x,\xi)} \cdot \lt( \xi^\gamma \nabla_{(x,\xi)} \lt( \frac{1}{\xi^\gamma} \nabla_{(x,\xi)}\cdot (\xi^\gamma \nabla_{(x,\xi)}K\star(\rho\otimes \delta_0)\rt)\rt) = c_{\alpha,d}\rho(x)\otimes\delta_0(\xi), 
 \]
 where $\gamma := \alpha+3-d \in (-1,1)$. In order to write the above in a compact form, we define an elliptic operator $\Delta_\gamma$ given as
 \bq\label{def_dg}
 \Delta_\gamma := \Delta_{(x,\xi)} + \frac{\gamma}{\xi}\pa_\xi.
 \eq
 Then we can easily check that the operator $\Delta_\gamma$ has the following relation:
 \bq\label{dg}
 \xi^\gamma \Delta_\gamma = \nabla_{(x,\xi)}\cdot(\xi^\gamma \nabla_{(x,\xi)}),
 \eq
 and thus we have
 \bq\label{dg2}
 \xi^\gamma \Delta_\gamma^2 K\star(\rho\otimes\delta_0)(x,\xi) = c_{\alpha,d} \rho(x)\otimes \delta(\xi)  \quad \mbox{in} \quad \R^d \times \R_+.
 \eq
This interpretation plays a significant role in estimating the modulated interaction energy. 

After having those observations, the proof follows from commutator estimates. To give the essential idea of our strategy, for an example, let us consider the Coulomb case, i.e., $\alpha = d-2$ with $d > 2$ and thus $-\Delta K = \delta_0$ up to a constant. Then we estimate
\begin{align*}
&\intr (\rho-\bar\rho) u \cdot \nabla K \star(\rho-\bar\rho)\,dx\cr
&\quad = -\intr \Delta K \star (\rho - \bar\rho) u \cdot \nabla K \star(\rho-\bar\rho)\,dx\cr
&\quad = \intr \nabla K \star (\rho - \bar\rho) \cdot \nabla (u \cdot \nabla K \star(\rho-\bar\rho))\,dx\cr
&\quad = \intr \nabla K \star (\rho - \bar\rho) \cdot \nabla^2 K \star (\rho - \bar\rho) u\,dx + \intr \nabla K \star (\rho - \bar\rho) [\nabla, u\cdot \nabla] K \star (\rho - \bar\rho)\,dx\cr
&\quad =: \sfI + \sfJ,
\end{align*}
where $[\cdot, \cdot]$ stands for the commutator operator, i.e., $[A,B] = AB - BA$. By the integration by parts, we get
\[
\sfI = -\frac12 \intr (\nabla \cdot u)|\nabla K \star (\rho - \bar\rho)|^2\,dx \leq \frac{\|\nabla u\|_{L^\infty}}{2}\intr |\nabla K \star (\rho - \bar\rho)|^2\,dx. 
\]
For $\sfJ$, by H\"older's inequality, we obtain
\[
\sfJ \leq \|\nabla K \star (\rho - \bar\rho)\|_{L^2}\|[\nabla, u \cdot \nabla] K \star (\rho - \bar\rho) \|_{L^2},
\]
and in this simple example, the term with commutator operator can be easily estimated as
\[
\|[\nabla, u \cdot \nabla] K \star (\rho - \bar\rho) \|_{L^2} \leq \|\nabla u\|_{L^\infty} \|\nabla K \star (\rho - \bar\rho)\|_{L^2}.
\]
On the other hand, by using the fact that $K$ satisfies $-\Delta K = \delta_0$, we can readily check that
\[
\|\nabla K \star (\rho - \bar\rho)\|_{L^2}^2 = \intr (\rho - \bar\rho) K \star (\rho - \bar\rho)\,dx.
\]
This shows that the Coulomb case satisfies \eqref{ess}, and thus we have \eqref{res_thm} with $C>0$ which depends only on $\|\nabla u\|_{L^\infty}$. We extend this argument to cover the regime $(0,d)$. As mentioned before, these types of estimates are well developed in \cite{Due16,Ser17,Ser20} for $\alpha \in (0\vee (d-2),d)$. Thus, we will only focus on the case $\alpha \in (0,d-2)$.

\subsection{Applications: quantified asymptotic analysis for kinetic equations}

\subsubsection{Small inertia limit of kinetic equation} 
As applications of Theorem \ref{thm_main}, we first study the asymptotic behavior of the following kinetic equation under the small inertia regime:
\bq\label{main_kin0}
\e\pa_t f^\e + \e v \cdot \nabla_x f^\e - \nabla_v \cdot \lt((\gamma v   + \nabla K \star \rho^\e )f^\e\rt) =0, \quad (x,v) \in \R^d \times \R^d,  \quad t > 0,
\eq
subject to the initial data:
\[
f^\e(x,v,t)|_{t=0} =: f_0^\e(x,v), \quad (x,v) \in \R^d \times \R^d,
\]
where $K$ is the interaction potential, which is of the form  $K = |x|^{-\alpha}$ with $\alpha \in (0,d)$, and $\gamma > 0$ denotes the strength of linear damping in velocity. Here $\rho^\e = \rho^\e(x,t)$ is the local particle density:
\[
\rho^\e = \intr f^\e\,dv.
\]
For the kinetic equation \eqref{main_kin0}, we are interested in the behavior of solutions $f^\e$ when $\e \to 0$. 

At the formal level, it follows from \eqref{main_kin0} that 
\[
(\gamma v   + \nabla K \star \rho^\e )f^\e \simeq 0 \quad \mbox{for} \quad \e \ll 1,
\]
and thus, this deduces that $f^\e$ becomes the mono-kinetic distribution of the form:
\[
f^\e(x,v) \simeq \rho^\e(x) \otimes \delta_{u^\e}(v) \quad \mbox{with} \quad \gamma u^\e =   - \nabla K \star \rho^\e.
\]
Thus, once we get $\rho^\e \to \rho$ and $u^\e \to u$ as $\e \to 0$, by estimating local moment in velocity on $f$, we find 
\bq\label{eq_agg}
\pa_t \rho + \nabla_x \cdot (\rho u) = 0, \quad \gamma  \rho u =  -    \rho \nabla K \star \rho.
\eq
%The main goal of this subsection is to prove the above formal derivation rigorously for singular interaction potential $K$ of the form $K = |x|^{-\alpha}$ with $\alpha \in (0,d)$. Note that this regime includes Coulomb or Riesz potentials. 
The asymptotic analysis of the equation \eqref{main_kin0} with regular interaction potentials $K$ when $\e \to 0$ is discussed in \cite{FS15, FST16, Jab00} based on compactness arguments. More recently, a quantitative estimate for the small inertia limit of kinetic equation is obtained in \cite{CC20}. In \cite{CC20}, the combination of the modulated macro-kinetic energy and $2$-Wasserstein distance plays a crucial role in having that quantitative error estimate between local densities. However, in those works, the strong regularity for the interaction potential $K$ is required, and it is totally unclear for the singular potentials. To the best of our knowledge, the small inertia limit for the kinetic equation \eqref{main_kin0} with singular kernels has not been studied so far. 

In order to overcome difficulties arising from the singular interaction potentials, we employ the modulated meso-kinetic energy combined with the modulated interaction energy estimate in Theorem \ref{thm_main} to obtain the quantitative error bounds between \eqref{main_kin0} and \eqref{eq_agg}.

\begin{theorem}\label{thm_ktoc} Let $T>0$ and $d \geq 1$. Let $f^\e \in \mc([0,T); \mathcal{P}(\R^d \times \R^d))$ be a solution to the equation \eqref{main_kin0} in the sense of distributions, and let $(\rho, u)$ be the unique classical solution of the aggregation-type equation \eqref{eq_agg} with $\rho > 0$ on $\R^d \times [0,T)$, $u \in W^{1,\infty}(\R^d \times (0,T))$, and if $\alpha<d-2$, $\nabla^{[(d-\alpha)/2]+1} u\in L^\infty(0,T;L^{\frac{d}{[(d-\alpha)/2]}}(\R^d))$ up to time $T>0$ with the initial data $ \rho_0$. If 
\[
\sup_{\e > 0}\intrr |v - u_0(x) |^2 f^\e_0(x,v)\,dxdv < \infty
\]
and
\[
\intr (\rho_0 - \rho^\e_0)K\star(\rho_0 - \rho^\e_0)\,dx + \d_{BL}(\rho_0, \rho_0^\e) \to 0 \quad \mbox{as} \quad \e \to 0,
\]
then we have
\begin{align*}
\intr \,f^\e\,dv &\rightharpoonup \rho \quad \mbox{weakly in } L^\infty(0,T;\mathcal{M}(\R^d)), \cr
\intr v \,f^\e\,dv  &\rightharpoonup \rho u  \quad \mbox{weakly in } L^2(0,T;\mathcal{M}(\R^d)), 
\end{align*}
and
\[
f^\e \rightharpoonup \rho\delta_{u} \quad \mbox{weakly in } L^2(0,T;\mathcal{M}(\R^d \times \R^d)),
\]
where $\d_{BL}$ stands for the bounded-Lipschitz distance, and we denoted by $\mathcal{M}(\R^n)$ the space of signed Radon measures on $\R^n$ with $n \in \N$.
\end{theorem}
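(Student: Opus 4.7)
The strategy is to build a modulated energy that combines the meso-kinetic piece and the interaction piece with $\e$-weights chosen so that the two cross terms cancel exactly. Set
\[
\me^\e(t) := \frac{\e}{2}\intrr |v-u(x,t)|^2 f^\e(x,v,t)\,dxdv + \frac{1}{2}\intr (\rho^\e-\rho) K\star(\rho^\e-\rho)\,dx,
\]
where $\rho^\e := \intr f^\e\,dv$ and $u^\e := (\rho^\e)^{-1}\intr v f^\e\,dv$. The $\e$-weight in front of the kinetic piece plays a double role: it kills the potentially $O(1)$ initial kinetic modulated energy, so that $\me^\e(0)\to 0$ under the hypotheses, and it rescales the $1/\e$-singular Vlasov force so that the resulting cross term matches the $O(1)$ cross term delivered by Theorem \ref{thm_main}.

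Differentiating the kinetic piece along \eqref{main_kin0} and using the limit-system identity $\gamma u + \nabla K\star\rho = 0$ (which rewrites the force as $\gamma v + \nabla K\star\rho^\e = \gamma(v-u) + \nabla K\star(\rho^\e-\rho)$), the standard integration by parts in $v$ yields
\[
\frac{\e}{2}\frac{d}{dt}\intrr|v-u|^2 f^\e\,dxdv = -2\gamma D(t) - \intr \rho^\e(u^\e-u)\cdot\nabla K\star(\rho^\e-\rho)\,dx + \e\,\mathsf{R}_{\mathrm{kin}},
\]
where $D(t) := \frac12\intrr|v-u|^2 f^\e\,dxdv$ and $\mathsf{R}_{\mathrm{kin}}$ groups transport residues bounded by $\|\nabla u\|_{L^\infty} D + \|\pa_t u + u\cdot\nabla u\|_{L^\infty}\sqrt D$. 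For the interaction piece, I apply Theorem \ref{thm_main} with the \emph{swap} $\bar\rho = \rho^\e$, $\bar u = u^\e$ (continuity equation from the zeroth $v$-moment of \eqref{main_kin0}) and $(\rho,u)$ equal to the smooth target solution. This choice is crucial: the constant $C$ in \eqref{res_thm} then depends only on the hypothesized Sobolev norms of the smooth $u$, not on the distributional $u^\e$. The resulting bound produces a cross term $+\intr \rho^\e(u^\e-u)\cdot\nabla K\star(\rho^\e-\rho)\,dx$ that is exactly the opposite of the one above.

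Summing the two identities, the cross terms cancel, and Young's inequality absorbs $\e\|\pa_t u + u\cdot\nabla u\|_{L^\infty}\sqrt D \le \gamma D + O(\e^2)$ and, for $\e$ small enough, $\e\|\nabla u\|_{L^\infty} D \le \frac{\gamma}{2} D$, leaving
\[
\frac{d}{dt}\me^\e(t) + \frac{\gamma}{2} D(t) \le C_1\,\me^\e(t) + C_2\,\e^2
\]
with $C_1, C_2$ depending only on $(\rho,u)$. Grönwall together with $\me^\e(0)\to 0$ yields $\sup_{[0,T]}\me^\e(t)\to 0$, and time-integrating the $\frac{\gamma}{2} D$ dissipation further gives $\int_0^T D(t)\,dt\to 0$. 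The three weak convergences then follow: the $L^\infty_t$-decay of the Riesz-type modulated interaction energy combined with tightness (from mass conservation, the propagated $\d_{BL}$ control, and the uniform second-moment bound) gives $\intr f^\e\,dv\rightharpoonup \rho$; the splitting $\rho^\e u^\e - \rho u = (\rho^\e-\rho)u + \rho^\e(u^\e-u)$ together with the Cauchy--Schwarz/Jensen estimate $\|\rho^\e(u^\e-u)\|_{L^1_x}\le\sqrt{2D}$ gives the first-moment convergence; and for a test function $\phi$, the Taylor-type bound $|\phi(x,v)-\phi(x,u)|\le\|\nabla_v\phi\|_{L^\infty}|v-u|$ combined with $\int_0^T D\to 0$ yields the mono-kinetic limit $f^\e\rightharpoonup \rho\delta_u$.

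The main obstacle is the cancellation step: without the $\e$-weight and the role-swap in Theorem \ref{thm_main}, neither the $1/\e$-singular force nor the missing regularity of $u^\e$ can be handled. The $\e$-weight is forced simultaneously by the singular force (for cross-term matching) and by the $O(1)$ initial kinetic energy (to ensure $\me^\e(0)\to 0$), while the swap is forced because the constant $C$ in Theorem \ref{thm_main} must be controlled by the regular target $u$. With both in place, the $-2\gamma D$ dissipation coming from the linear damping absorbs the transport residues and leaves only a benign $O(\e^2)$ source, and the Grönwall loop closes.
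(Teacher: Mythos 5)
Your proposal is correct and follows essentially the same route as the paper: the object $\me^\e$ is exactly $\e$ times the modulated energy used in the paper's Proposition~\ref{prop_ktoc2} (where the interaction piece carries a $1/\e$-weight rather than the kinetic piece carrying an $\e$-weight), the cross-term cancellation is obtained by applying Theorem~\ref{thm_main} with $(\bar\rho,\bar u)=(\rho^\e,u^\e)$ so that the constant depends only on the smooth target $u$, the transport residue is the same $\sfJ_1+\sfJ_4$ controlled via $\|\nabla u\|_{L^\infty}$ and $e=\pa_t u+u\cdot\nabla u$, and the passage from the Gr\"onwall bound to the three weak convergences uses the same $\d_{BL}$-propagation and Cauchy--Schwarz/Jensen estimates (the paper's Remark~\ref{rmk_dbl} and Lemma~\ref{lem_conv}). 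The only cosmetic difference is the normalization of the modulated energy and the fact that the paper phrases the algebraic identity $\gamma u+\nabla K\star\rho=0$ as the rewritten Euler-type system \eqref{eq_Euler}.
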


\begin{remark} From the convergence of modulated interaction energy, we obtain 
\[
\rho^\e \to \rho \quad \mbox{in } L^\infty(0,T; \dot{H}^{-\frac{d-\alpha}{2}}(\R^d)).
\]
\end{remark}

\begin{remark} As stated in Theorem \ref{thm_ktoc}, we need the unique classical solution to the limiting equation \eqref{eq_agg} to make all our estimates completely rigorous. For this, we refer to \cite{CJ21}, where the local-in-time existence and uniqueness of classical solutions are established. On the other hand, for the kinetic equation, we need a rather weak regularity of solutions. When $\alpha \in (0,d-1)$, the global-in-time existence theory for weak solutions obtained in \cite{CCJ21} would be adopted, see also \cite{CCS19} for the case $\alpha \in (0,d-2)$. For the case $\alpha \in [d-1,d)$, up to our limited knowledge, the global-in-time existence theory is not available up to now; the local-in-time existence and uniqueness of strong solutions are investigated in \cite{IVDB98} for $\alpha = d-1$. For that reason, our result on the small inertia limit is rigorous at the formal level when $\alpha \in (d-1,d)$.
\end{remark}

\begin{remark} In order to derive the equation  \eqref{eq_agg} with a linear diffusion, i.e. aggregation-diffusion equation, the Vlasov--Fokker--Planck equation can be considered. More precisely, in \cite{CTpre}, the equation \eqref{main_kin0} with the linear diffusion term $\gamma \Delta_v f$ on the right hand side, and quantified asymptotic analysis under the overdamped regime is discussed, see also \cite{CCPpre, GM10, PS00}. 
\end{remark}

\begin{remark} The fractional porous medium equation \cite{ CV11, CV11_2, CJ21}
\[
\pa_t \rho + \nabla \cdot (\rho \nabla K\star \rho) = c_P \Delta \rho^\gamma
\]
can be rigorously and quantitatively derived from the following damped Euler equations as $\e \to 0$:
\[%\begin{align}\label{main_ktoe}
\begin{aligned}
&\pa_t \rho^\e + \nabla \cdot (\rho^\e u^\e) = 0, \\
&\e\pa_t (\rho^\e u^\e) + \e\nabla \cdot ( \rho^\e u^\e\otimes u^\e) +    c_P \nabla p(\rho^\e) =  -   \rho^\e u^\e -    \rho^\e \nabla K \star \rho^\e.
\end{aligned}
\]%\end{align}
Here $c_P \geq 0$ is the pressure coefficient and the pressure $p = p(\rho)$ is given by the power-law $p(\rho) = \rho^\delta$ for some $\delta \geq 1$. The Coulomb case is investigated for the strong relaxation limit from Euler equations to the aggregation equation in \cite{CPW20, C21, LT13, LT17}, see also \cite{CCT19}.  More recently, the Riesz interaction case $\alpha \in (d-2,d)$, and more generally,  the case $\alpha \in (0,d)$, is dealt with in \cite{CJpre1}, see \cite{CJpre3, CJpre} for the well-posedness theory for the Euler--Riesz system. The main strategy is based on the modulated energies, e.g., kinetic, internal, interaction energies, combined with the bounded-Lipschitz distance estimates. It is worth mentioning that it is typically assumed that the damping coefficient $\gamma >0$ is chosen sufficiently large for the relaxation limit from Euler equations towards aggregation equation, however, in our result, Theorem \ref{thm_ktoc}, we do not require such assumption on $\gamma$. 
\end{remark}

\subsubsection{Hydrodynamic limit from kinetic to Euler equations}
We next apply the modulated interaction energy estimate in Theorem \ref{thm_main} to study the hydrodynamic limit of the kinetic equation to the Euler-type system. We now consider the nonlinear Fokker--Planck operator in the kinetic equation \eqref{main_kin0}:
\begin{align}\label{main_eq}
\begin{aligned}
&\pa_t f + v\cdot\nabla_x f -  \nabla_v \cdot \lt((\gamma v +  \nabla K \star \rho )f \rt)  = \mathcal{N}_{FP}[f], \quad (x,v,t) \in \R^d \times \R^d \times \R_+,
\end{aligned}
\end{align}
where the nonlinear Fokker--Planck operator $\mathcal{N}_{FP}[f]$ is given by
\[
 \mathcal{N}_{FP}[f](x,v) := \nabla_v \cdot (\beta (v - u)f + \sigma \nabla_v f) = \sigma \nabla_v \cdot \lt(f \nabla_v \log \frac{f}{M_u} \rt) 
\]
with the local Maxwellian
\[
M_u := \frac{\beta^{d/2}}{(2\pi \sigma)^{d/2}} \exp\lt(-\frac{\beta |u-v|^2}{2\sigma} \rt) ,
\]
and positive constants $\beta$ and $\sigma$. Here the term with $\beta$ is the local velocity alignment force and the other term with $\sigma$ is the linear diffusion in velocity. 

Then we deal with two asymptotic regimes for \eqref{main_eq}: strong local alignment without diffusion ($\beta = \e^{-1}$ and $\sigma = 0$) and strong local alignment and diffusion ($\beta = \sigma = \e^{-1}$). More precisely, we will show that the equation \eqref{main_eq} converges towards the following Euler-type system as $\e \to 0$:
\begin{align}\label{main_E}
\begin{aligned}
&\pa_t \rho + \nabla_x \cdot (\rho u) = 0, \quad (x,t) \in \R^d \times \R_+,\cr
&\pa_t (\rho u) + \nabla_x \cdot (\rho u \otimes u) + c_P\nabla_x \rho = - \gamma \rho u -  \rho \nabla_x K \star \rho,
\end{aligned}
\end{align}
where the coefficient of pressure $c_P = c_P(\sigma) \geq 0$ is given by
\[
c_P = 0  \mbox{ when }  \sigma = 0 \quad \mbox{and} \quad c_P = 1 \mbox{ when $\sigma = \e^{-1}$}.
\]

The hydrodynamic limit from \eqref{main_eq} towards \eqref{main_E} is well studied in \cite{FK19, KMT15} when $K \equiv 0$. Recently, an unified approach for the hydrodynamic limits of \eqref{main_eq} with the interaction potential $K$, which has either the Coulomb singularity or bounded-Lipschitz continuity, in \cite{CCJ21}, regardless of the presence of the diffusion. The main idea of proof relies on the classical relative entropy argument combined with the modulated interaction energy and bounded-Lipschitz distance estimates. To extend this result to cover the other singular interaction potentials $K = |x|^{-\alpha}$ with $\alpha \in (0,d)$, we need to estimate the term with the potential $K$ differently from that of \cite{CCJ21}, but this can be easily handled by using Theorem \ref{thm_main}.

In order to state the main theorem in this part, we introduce the free energy $\mf$ and the dissipation $\md$ as follows: 
\[
\mf(f) := c_P\intrr f \log f\,dxdv + \frac12\intrr |v|^2 f\,dxdv + \frac{1}{2}\intrr K(x-y)\rho(x) \rho(y)\,dxdy 
\]
and
\[
\md(f) := \intrr \frac{1}{f} \lt|c_P \nabla_v f - f(u-v) \rt|^2 dxdv,
\]
respectively.

\begin{theorem}\label{thm_hydro} Let $T>0$ and $d \geq 1$. Let $f^\e$ be a weak solution to the equation \eqref{main_eq} with either $\beta = 1/\e$ and $\sigma = 0$ or $\beta = \sigma = 1/\e$ satisfying $f \in L^\infty(0,T;(L^1_+ \cap L^\infty)(\R^d \times \R^d))$ and the free energy inequality:
\[
\mf(f^\e) + \int_0^t \lt(\frac1{2\e}\md(f^\e) + \gamma \intr \rho^\e|u^\e|^2\,dx \rt)ds \leq \mf(f_0^\e) + C(1+ \gamma^2)\e,
\]
where $C > 0$ depends only $T$ and $f_0^\e$.
Let $(\rho,u)$ be the unique classical solution to the system \eqref{main_E}  satisfying $u \in W^{1,\infty}(\R^d \times (0,T))$, and if $\alpha<d-2$, $\nabla^{[(d-\alpha)/2]+1} u\in L^\infty(0,T;L^{\frac{d}{[(d-\alpha)/2]}}(\R^d))$ up to the time $T > 0$. Suppose that the initial data are well-prepared in the sense that the following assumptions hold:
\begin{itemize}
\item[(i)] 
$ 
\intr \rho_0^\e|u_0 - u^\e_0|^2\,dx = \mathcal{O}(\sqrt\e)$ and $  \intr \lt( \intr f_0^\e |v|^2\,dv  - \rho_0|u_0|^2\rt)dx  = \mathcal{O}(\sqrt\e), 
$
\item[(ii)] $\intr (\rho_0 - \rho_0^\e)K\star(\rho_0 - \rho_0^\e)\,dx = O(\sqrt\e), $
\item[(iii)] $d^2_{BL}(\rho^\e_0, \rho_0) = \mathcal{O}(\sqrt\e)$.
\end{itemize}
When $\sigma \neq 0$ $(c_P = 1)$, we further assume 
\[
\rho_0^\e\lt( \log \rho^\e_0 - \log \rho_0 \rt) + (\rho_0 - \rho_0^\e) = \mathcal{O}(\sqrt\e)\quad \mbox{and} \quad \intr\lt(\intr f_0^\e \log f_0^\e\,dv - \rho_0 \log \rho_0 \rt)dx = \mathcal{O}(\sqrt\e).
\]
If $\sigma = \beta = \e^{-1}$,  then we have
\bq\label{conv_res1}
\intr \,f^\e\,dv \to \rho, \quad \intr v \,f^\e\,dv  \to \rho u,    \quad \mbox{in } L^\infty(0,T;L^1(\R^d))
\eq
and
\[
f^\e \to \frac{\rho}{(2\pi)^{d/2}} e^{-\frac{|u-v|^2}{2}} \quad \mbox{in } L^\infty(0,T;L^1(\R^d \times \R^d)) \quad \mbox{as } \e \to 0. 
\]
On the other hand, if $\beta = 1/\e$ and $\sigma = 0$ $(c_P = 0)$, then we have the convergences \eqref{conv_res1} weakly in $L^\infty(0,T;\mathcal{M}(\R^d))$ and 
\[
f^\e \rightharpoonup \rho \otimes \delta_u \quad \mbox{weakly in } L^\infty(0,T;\mathcal{M}(\R^d \times \R^d)) \quad \mbox{as } \e \to 0.
\]
\end{theorem}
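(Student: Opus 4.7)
The strategy is a modulated free energy (relative entropy) argument combined with the singular-kernel modulated interaction estimate provided by Theorem \ref{thm_main}. I would define the total modulated energy
\[
\mathcal{E}^\e(t) := \mathcal{R}^\e(t) + \frac{1}{2}\intr (\rho^\e - \rho)K\star(\rho^\e - \rho)\,dx,
\]
where $\mathcal{R}^\e$ denotes the classical kinetic relative entropy of $f^\e$ with respect to the local Maxwellian $\rho M_u$ when $c_P = 1$, and the modulated kinetic energy $\tfrac{1}{2}\iint f^\e |v-u|^2\,dxdv$ about the mono-kinetic state $\rho\otimes\delta_u$ when $c_P = 0$. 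In both cases $\mathcal{E}^\e \geq 0$, and the well-preparedness assumptions (i)--(iii), together with the Csisz\'ar--Kullback--Pinsker inequality when $c_P = 1$, yield $\mathcal{E}^\e(0) = \mathcal{O}(\sqrt\e)$.

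The plan is to propagate this bound by establishing a Gronwall inequality of the form $\mathcal{E}^\e(t) \leq \mathcal{E}^\e(0) + C(1+\gamma^2)\e + C\int_0^t \mathcal{E}^\e(s)\,ds$. To this end I would differentiate $\mathcal{E}^\e$ in time, control the contribution from $f^\e$ via the given free energy inequality (which furnishes both the slack $C(1+\gamma^2)\e$ and the nonnegative dissipation $\tfrac{1}{2\e}\md(f^\e)$), and unfold the time derivatives of the reference quantities $\rho$, $u$, $\log \rho$, and $\rho K\star\rho$ using the limit system \eqref{main_E}. Following the Dafermos--DiPerna relative entropy machinery used in \cite{CCJ21, FK19, KMT15}, the resulting cross terms split into: (i) quadratic forms against $\nabla u$ absorbed by $\|\nabla u\|_{L^\infty}\cdot \mathcal{E}^\e$; (ii) the isothermal pressure contribution, which in the $c_P = 1$ case cancels through the relative entropy identity for the ideal gas; (iii) a non-positive damping term $-\gamma \intr \rho^\e |u^\e - u|^2\,dx$; and (iv) residuals of size $\mathcal{O}(\e^{-1/2})$ produced by the strong-alignment operator, which are absorbed into $\tfrac{1}{\e}\md(f^\e)$ by Young's inequality.

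The genuinely non-standard contribution is the singular interaction cross term of the form $\intr \rho^\e(u^\e - u)\cdot\nabla K\star(\rho - \rho^\e)\,dx$, which for $\alpha$ close to $d$ is not even finite without further care. This is precisely where Theorem \ref{thm_main} enters decisively: applied with $(\bar\rho,\bar u)=(\rho,u)$ (the Euler solution, whose regularity matches the assumptions of the theorem) and $(\rho,u)=(\rho^\e, u^\e)$ (the first two moments of $f^\e$), it gives
\[
\tfrac12\frac{d}{dt}\intr (\rho^\e - \rho)K\star(\rho^\e - \rho)\,dx \leq \intr \rho(u^\e - u)\cdot\nabla K\star(\rho^\e - \rho)\,dx + C\intr (\rho^\e - \rho)K\star(\rho^\e - \rho)\,dx.
\]
The cross term on the right has the opposite sign to the problematic term produced in the kinetic evolution, so the two cancel exactly, leaving a remainder linear in $\mathcal{E}^\e$ as required. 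This cancellation is the main obstacle of the argument; everything else amounts to classical relative entropy bookkeeping.

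The convergence statements then follow by reading off the pieces of $\mathcal{E}^\e$. When $c_P = 1$, the Csisz\'ar--Kullback--Pinsker inequality upgrades the vanishing relative entropy to strong $L^\infty_t L^1_{x,v}$ convergence of $f^\e$ to the local Maxwellian, from which the strong moment convergences \eqref{conv_res1} follow. When $c_P = 0$, vanishing of $\tfrac12\iint f^\e |v - u|^2\,dxdv$ yields weak measure-valued convergence of $f^\e$ to $\rho\otimes\delta_u$; combined with a bounded-Lipschitz distance bound on $\rho^\e - \rho$ propagated from hypothesis (iii) via the continuity equation (as in \cite{CCJ21, CJpre1}), this identifies the limiting density and closes the argument.
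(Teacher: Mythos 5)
The overall strategy you propose (a modulated energy combining the kinetic/macroscopic relative entropy with the modulated interaction energy, closed via a Gr\"onwall inequality that uses Theorem~\ref{thm_main} to control the singular cross term) is in fact the route the paper takes, via Propositions~\ref{prop_re} and~\ref{prop_hydro1}. However, your application of Theorem~\ref{thm_main} is backwards, and this breaks the very cancellation your argument hinges on.

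You apply Theorem~\ref{thm_main} with $(\bar\rho,\bar u)=(\rho,u)$ (Euler) and $(\rho,u)=(\rho^\e,u^\e)$ (kinetic moments). There are two problems. First, in Theorem~\ref{thm_main} the regularity hypotheses in (iii) --- $\nabla u\in L^\infty$, and the higher-order bounds when $\alpha<d-2$ --- are imposed on the velocity of the \emph{second} pair. With your assignment these would have to hold for $u^\e$, which is merely a moment of the weak solution $f^\e$ and enjoys no such regularity; so the theorem is not applicable as you invoke it. Second, even ignoring this, your assignment yields the cross term $\int_{\R^d}\rho\,(u^\e-u)\cdot\nabla K\star(\rho^\e-\rho)\,dx = -\int_{\R^d}\rho\,(u^\e-u)\cdot\nabla K\star(\rho-\rho^\e)\,dx$, which is weighted by the Euler density $\rho$. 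But the problematic term produced by the relative entropy computation is $\int_{\R^d}\rho^\e\,(u^\e-u)\cdot\nabla K\star(\rho-\rho^\e)\,dx$, weighted by the kinetic density $\rho^\e$. Their sum is $\int_{\R^d}(\rho^\e-\rho)(u^\e-u)\cdot\nabla K\star(\rho-\rho^\e)\,dx$, which is not zero and is not obviously controllable by $\me$ plus the interaction energy. The claimed exact cancellation fails.

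The fix is the opposite assignment: $(\bar\rho,\bar u)=(\rho^\e,u^\e)$ and $(\rho,u)$ equal to the smooth Euler pair, exactly as in the proof of Proposition~\ref{prop_ktoc2} and implicitly in Proposition~\ref{prop_hydro1}. Then the regularity hypotheses fall on the classical solution $u$ (where they hold by assumption), and the resulting cross term $\int_{\R^d}\rho^\e\,(u-u^\e)\cdot\nabla K\star(\rho-\rho^\e)\,dx$ is literally the negative of the kinetic cross term, so the cancellation is exact. A secondary remark: the Csisz\'ar--Kullback--Pinsker inequality is not what gives $\mathcal{E}^\e(0)=\mathcal{O}(\sqrt\e)$ --- that follows directly from well-preparedness assumptions (i)--(iii) --- rather, CKP is used at the end of the argument to upgrade the small relative entropy into $L^1$ convergence of $f^\e$ and its moments when $c_P=1$.
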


\begin{remark} If we choose the initial data $f_0^\e$ as
\[
f_0^\e(x,v) = \frac{\rho_0(x)}{(2\pi)^{d/2}}\exp\lt(-\frac{|u_0(x) - v|^2}{2}\rt) \quad \mbox{for all} \quad \e > 0,
\]
then we obtain
\[
\rho^\e_0 = \intr f^\e_0\,dv = \rho_0 \quad \mbox{and} \quad \rho^\e_0 u^\e_0 = \intr v f^\e_0\,dv = \intr u_0 f^\e_0\,dv =\rho_0 u_0.
\]
\end{remark}

\begin{remark} For the existence and uniqueness of classical solutions for the limiting equation \eqref{main_E}, we refer to \cite{CJpre3, CJpre} for the case $\alpha \in (d-2,d)$; local-in-time well-posedness theory is established in \cite{CJpre3} even for the attractive Riesz interaction case, and the global-in-time theory is obtained in \cite{CJpre} for the system \eqref{main_E} with $c_P = 0$. For the case $\alpha \in (0,d-2]$, a simple modification of the strategy used in \cite{CCJ21} would be applied to obtain the local-in-time well-posedness for \eqref{main_E}.
\end{remark}

\begin{remark}The assumptions on the regularity of solutions can be relaxed. One may check the argument proposed in \cite{LT17} to estimate our relative entropy appearing in Section \ref{ssec_hydro} by taking into account the following notation of solutions $(\rho,u)$ to \eqref{main_E}:
\begin{itemize}
\item[(i)] $(\rho, u) \in \mc([0,T];L^1(\R^d)) \times \mc([0,T];W^{1,\infty}(\R^d))$,
\item[(ii)] $(\rho, u)$ satisfies the following free energy estimate in the sense of distributions:
$$\begin{aligned}
&\frac{d}{dt}\lt(\frac12\intr \rho|u|^2\,dx  + c_P\intr \rho \log \rho\,dx   + \frac12\intr (K \star \rho)\rho\,dx \rt) = - \gamma \intr \rho|u|^2\,dx,
\end{aligned}$$
\item[(iii)] $(\rho, u)$ satisfies the system \eqref{main_E} in the sense of distributions.
\end{itemize}
\end{remark}

The rest of this paper is organized as follows. We provide the details of the proof for our main result on the modulated interaction energy in Theorem \ref{thm_main} by dealing with two cases: $\alpha \in (0,d-2) \setminus(d-2 \N)$ and $\alpha \in (0,d-2) \cap (d-2\N)$. In Section \ref{sec_subCoul}, we consider the former case by introducing the elliptic operator $\Delta_\gamma$ and applying the dimension extension argument. Section \ref{sec_subCoul2} is devoted to take into account the other case. Finally, in Section \ref{sec:appl}, we prove Theorems \ref{thm_ktoc} and \ref{thm_hydro} on the quantified small inertia limit and hydrodynamic limit of kinetic equations.

%%%%%%%%%%%%%%%%%%%%%%%%%%%%%%%%%%%%%%%%%%%%%%%
%
%
%
%
%
%
%%%%%%%%%%%%%%%%%%%%%%%%%%%%%%%%%%%%%%%%%%%%%%%

\section{Modulated interaction energy estimates: $\alpha \in (0,d-2) \setminus (d-2 \N)$.}\label{sec_subCoul}

The objective of this section is to provide the details on the modulated interaction energy estimate in Theorem \ref{thm_main} when $\alpha \in (0,d-2) \setminus (d-2\N)$. In order to introduce a general strategy of the proof, in the following two subsections, we consider the exponent $\alpha$ satisfying $\alpha \in (0 \vee (d-4), d-2) $ with describing the main ideas behind our strategy for the general case.

%%%%%%%%%%%%%%%%%%%%%%%%%%%%%%%%%%%%%%%%%%%%%%%
%
%
%
%
%
%
%%%%%%%%%%%%%%%%%%%%%%%%%%%%%%%%%%%%%%%%%%%%%%%
\subsection{Case: $0 \vee (d-4) < \alpha<d-2$}
In this subsection, we consider the interaction potential $K$ given by \eqref{kernel} with $0 \vee (d-4)< \alpha<d-2$ and $d>2$.

 %Let us consider the case $0 \vee (d-4)<\alpha<d-2$. 
Let $\alpha \in (0 \vee (d-4), d-2)$. 
 Straightforward computation shows that we can interpret the interaction potential $K$ satisfies
 \[
 \nabla_{(x,\xi)} \cdot \lt( \xi^\gamma \nabla_{(x,\xi)} \lt( \frac{1}{\xi^\gamma} \nabla_{(x,\xi)}\cdot (\xi^\gamma \nabla_{(x,\xi)}K\star(\rho\otimes \delta_0)\rt)\rt) = c_{\alpha,d}\rho(x)\otimes\delta_0(\xi), 
 \]
 where $\gamma := \alpha+3-d \in (-1,1)$. In order to write the above in a compact form, we define an elliptic operator $\Delta_\gamma$ given as
 \bq\label{def_dg}
 \Delta_\gamma := \Delta_{(x,\xi)} + \frac{\gamma}{\xi}\pa_\xi.
 \eq
 Then we can easily check that the operator $\Delta_\gamma$ has the following relation:
 \bq\label{dg}
 \xi^\gamma \Delta_\gamma = \nabla_{(x,\xi)}\cdot(\xi^\gamma \nabla_{(x,\xi)})=\xi^\gamma \Delta_{(x,\xi)} + \pa_\xi (\xi^\gamma)\pa_\xi,
 \eq
 and thus we have
 \bq\label{dg2}
 \xi^\gamma \Delta_\gamma^2 K\star(\rho\otimes\delta_0)(x,\xi) = c_{\alpha,d} \rho(x)\otimes \delta(\xi)  \quad \mbox{in} \quad \R^d \times \R_+.
 \eq
It is clear that the operators $\Delta_\gamma$ and $\nabla_x$ commute, however, $\Delta_\gamma$ and $\nabla_{(x,\xi)}$ do not commute, i.e. $\Delta_\gamma \nabla_{(x,\xi)} \neq \nabla_{(x,\xi)} \Delta_\gamma$. Indeed, for $f = f(x,\xi) \in \mc^3(\R^d \times \R_+)$, we find
\[
(\nabla_{(x,\xi)} \Delta_\gamma - \Delta_\gamma \nabla_{(x,\xi)})f = (0, [\pa_\xi, \gamma\xi^{-1}]\pa_\xi f) = (0, -\gamma \xi^{-2}\pa_\xi f).
\]
This implies the operators $\Delta_\gamma$ and $\nabla_{(x,\xi)}$ do commute except the $(d+1)$-th component. On the other hand, this observation gives
\[
({\bf a},0) \cdot (\nabla_{(x,\xi)} \Delta_\gamma - \Delta_\gamma \nabla_{(x,\xi)})f = 0
\]
for any ${\bf a} \in \R^d$.

Before proceeding further, we present some technical estimates below. For simplicity of presentation, we assume that the positive constant $c_{\alpha,d} = 1$ in the following computations.
 \begin{lemma}\label{lem_dg}
 \begin{itemize}
 \item[(i)] For $f,g \in \mc^2(\R^d \times \R_+)$, we obtain
 \[
 \iint_{\R^d \times \R_+} \xi^\gamma (\Delta_\gamma f) g\,dxd\xi =  \iint_{\R^d \times \R_+} \xi^\gamma f(\Delta_\gamma g) \,dxd\xi.
 \]
  \item[(ii)]   The modulated interaction energy can be rewritten as 
  \[
  \iint_{\R^d\times\R_+} \xi^\gamma | \Delta_\gamma K\star((\rho-\bar\rho)\otimes \delta_0))|^2\,dxd\xi.
  \]
 \end{itemize}
 \end{lemma}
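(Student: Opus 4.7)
My approach is to leverage the identity \eqref{dg}, which displays $\xi^\gamma \Delta_\gamma$ as the divergence of a weighted gradient, so that both claims reduce to an application of the divergence theorem with the measure $\xi^\gamma\,dxd\xi$.

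For part (i), the plan is to apply integration by parts twice on the half-space $\R^d \times \R_+$ against the weighted measure $\xi^\gamma\,dxd\xi$. Using \eqref{dg},
\[
\iint_{\R^d \times \R_+} \xi^\gamma (\Delta_\gamma f) g \,dxd\xi = \iint_{\R^d \times \R_+} \nabla_{(x,\xi)} \cdot (\xi^\gamma \nabla_{(x,\xi)} f)\, g \,dxd\xi,
\]
and an integration by parts formally yields $-\iint \xi^\gamma \nabla_{(x,\xi)} f \cdot \nabla_{(x,\xi)} g\,dxd\xi$, after which a second integration by parts moves the weighted divergence onto $g$ and recovers $\iint \xi^\gamma f\,(\Delta_\gamma g)\,dxd\xi$. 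Two boundary terms have to vanish: decay at spatial infinity (which follows from the decay of $K\star(\rho\otimes\delta_0)$ when the claim is applied to that function) and, more delicately, the boundary at $\xi=0$, which contributes $\int_{\R^d} \xi^\gamma (\partial_\xi f) g \, dx\big|_{\xi=0}$. Since $\gamma\in(-1,1)$, the weight is locally integrable, and for the extended potentials $K\star(\rho\otimes\delta_0)$ and their $\Delta_\gamma$-transforms, standard Caffarelli--Silvestre-type trace estimates yield $\xi^\gamma \partial_\xi f \to 0$ as $\xi \to 0^+$, so the boundary term vanishes. I would carry this out rigorously by truncating to $\{\delta < \xi < R\}\times B_R$, integrating by parts on the truncation, and sending $\delta\to 0$, $R\to\infty$.

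For part (ii), set $\sigma := \rho-\bar\rho$ and $u(x,\xi) := K\star(\sigma\otimes\delta_0)(x,\xi)$. The extension identity \eqref{dg2} reads $\xi^\gamma \Delta_\gamma^2 u = \sigma(x)\otimes\delta_0(\xi)$ (with the normalization $c_{\alpha,d}=1$), and the trace $u(x,0) = (K\star\sigma)(x)$ is the usual convolution on $\R^d$. Pairing against $u$ and using the action of $\delta_0(\xi)$ in the $\xi$-variable gives
\[
\intr \sigma\,(K\star\sigma)\,dx = \iint_{\R^d\times\R_+} \xi^\gamma (\Delta_\gamma^2 u)\, u \,dxd\xi.
\]
Now part (i), applied with $f = \Delta_\gamma u$ and $g = u$, converts the right-hand side into $\iint_{\R^d\times\R_+} \xi^\gamma |\Delta_\gamma u|^2\,dxd\xi$, which is exactly the desired representation of the modulated interaction energy.

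The main obstacle I anticipate is the rigorous justification of the integrations by parts near the degenerate boundary $\{\xi=0\}$ for the specific function $u=K\star(\sigma\otimes\delta_0)$, since $\sigma\otimes\delta_0$ is a singular measure and $\gamma$ may be negative. I would handle this by exploiting the explicit homogeneity of $K(x,\xi) = |(x,\xi)|^{-\alpha}$ to read off the asymptotic behavior of $u$, $\nabla_{(x,\xi)} u$, $\Delta_\gamma u$, and $\nabla_{(x,\xi)} \Delta_\gamma u$ as $\xi\to 0^+$, together with the assumption $\rho,\bar\rho \in L^1(\R^d)$ and the modulated energy bound in Theorem \ref{thm_main} guaranteeing $\iint \xi^\gamma |\Delta_\gamma u|^2 < \infty$, so that all the truncated boundary contributions tend to zero. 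Once these boundary issues are controlled, both (i) and (ii) follow from a purely algebraic manipulation based on \eqref{dg} and \eqref{dg2}.
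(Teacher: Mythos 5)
Your argument is correct and follows essentially the same route as the paper: both parts rest on the divergence identity $\xi^\gamma\Delta_\gamma = \nabla_{(x,\xi)}\cdot(\xi^\gamma\nabla_{(x,\xi)})$, with part (i) being two weighted integrations by parts on the half-space and part (ii) pairing $K\star((\rho-\bar\rho)\otimes\delta_0)$ against itself via \eqref{dg2} and then applying (i). Your added attention to the boundary contributions at $\xi=0$ and at infinity (which the paper silently drops) is a reasonable refinement, though note that the finiteness you invoke is supplied by the energy-inequality hypothesis of Theorem \ref{thm_main}, not by its conclusion.
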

 \begin{proof} (i) It follows from \eqref{dg} that 
 \begin{align*}
\iint_{\R^d \times \R_+} \xi^\gamma (\Delta_\gamma f) g\,dxd\xi &=  \iint_{\R^d \times \R_+} \nabla_{(x,\xi)}\cdot(\xi^\gamma \nabla_{(x,\xi)}f) g\,dxd\xi \cr
&= -\iint_{\R^d \times \R_+} \xi^\gamma \nabla_{(x,\xi)}f \cdot \nabla_{(x,\xi)}g\,dxd\xi \cr
&=  \iint_{\R^d \times \R_+} f\nabla_{(x,\xi)}\cdot(\xi^\gamma \nabla_{(x,\xi)}g) \,dxd\xi \cr
&=  \iint_{\R^d \times \R_+} \xi^\gamma f(\Delta_\gamma g) \,dxd\xi.
 \end{align*}
 (ii) Note that
 \begin{align*}
 \intr (\rho -\bar \rho) K \star (\rho - \bar\rho)\,dx &= \iint_{\R^d \times \R_+} \lt((\rho -\bar \rho)(x) \otimes \delta_0(\xi)
\rt) K \star ((\bar\rho - \rho) \otimes \delta_0)(x,\xi)\,dxd\xi\cr
&=\iint_{\R^d \times \R_+} \lt(\xi^\gamma \Delta_\gamma^2 K\star(\rho\otimes\delta_0)(x,\xi) 
\rt) K \star ((\bar\rho - \rho) \otimes \delta_0)(x,\xi)\,dxd\xi,
 \end{align*}
due to \eqref{dg2}. We finally use (i) to conclude the desired result.
 \end{proof}
 
We notice that the elliptic operator $\Delta_\gamma$ consists of the usual Laplace operator defined on the upper half space and the differential operator with respect to the extension variable. As observed in Lemma \ref{lem_dg} (i), this operator behaves under integration by parts in the weighted Sobolev space as the usual Laplacian does in the unweighted one. 

Our next concern is to find a relation between the $L^2$ norm of $\Delta_\gamma K\star((\rho-\bar\rho)\otimes \delta_0)$ and $\nabla_{(x,\xi)}^2 K\star((\rho-\bar\rho)\otimes \delta_0)$ in the weighted space. In fact, the following lower bound estimate of the modulated energy plays a crucial role in our main strategy. For notational simplicity, throughout this subsection, we often write
\[
\K := K\star((\rho-\bar\rho)\otimes \delta_0).
\]
\begin{lemma}\label{lem_dg2} Suppose that the exponent $\alpha$ satisfies $0 \vee (d-4)<\alpha<d-2$. Then we have
\bq\label{id_del}
\iint_{\R^d\times\R_+}\xi^\gamma |\nabla_{(x,\xi)}^2 \K|^2\,dxd\xi =(1-\gamma/4) \iint_{\R^d\times\R_+} \xi^\gamma |\Delta_\gamma \K|^2\,dxd\xi,
\eq
where $\gamma :=   \alpha+3-d \in (-1,1)$. 
\end{lemma}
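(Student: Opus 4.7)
The strategy has two components: a general weighted integration-by-parts identity valid for any sufficiently smooth, decaying function, and a pointwise algebraic identity specific to the Riesz extension $\K$. First I would establish that for any sufficiently smooth, suitably decaying $f$,
\[
\iint_{\R^d\times\R_+}\xi^\gamma |\nabla_{(x,\xi)}^2 f|^2\,dxd\xi = \iint_{\R^d\times\R_+}\xi^\gamma |\Delta_\gamma f|^2\,dxd\xi - \gamma \iint_{\R^d\times\R_+}\xi^{\gamma-2}(\partial_\xi f)^2\,dxd\xi.
\]
To this end, I would split $|\nabla_{(x,\xi)}^2 f|^2 = \sum_{i,j=1}^{d+1}(\partial_i\partial_j f)^2$ into the pure-$x$, mixed, and pure-$\xi$ blocks. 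Two integrations by parts in $x$ (under which the weight $\xi^\gamma$ is inert) convert the pure-$x$ block into $\iint \xi^\gamma(\Delta_x f)^2$. One IBP in $x_j$ followed by one in $\xi$ turns the mixed block into $2\iint \xi^\gamma(\partial_\xi^2 f)\Delta_x f + 2\gamma \iint \xi^{\gamma-1}(\partial_\xi f)\Delta_x f$, while the pure-$\xi$ block is left as $\iint\xi^\gamma(\partial_\xi^2 f)^2$. Summing and completing the square gives $\iint \xi^\gamma(\Delta f)^2 + 2\gamma \iint \xi^{\gamma-1}(\partial_\xi f)\Delta_x f$. Expanding $|\Delta_\gamma f|^2$ via $\Delta_\gamma=\Delta+\gamma\xi^{-1}\partial_\xi$ and applying the one-variable identity $\int_0^\infty\xi^{\gamma-1}(\partial_\xi f)\partial_\xi^2 f\,d\xi = -\tfrac{\gamma-1}{2}\int_0^\infty\xi^{\gamma-2}(\partial_\xi f)^2\,d\xi$ converts the cross terms and yields the identity.

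Next I would exploit the explicit form of $K$ in $\R^{d+1}$. Direct differentiation of $K(x,\xi)=|(x,\xi)|^{-\alpha}$ gives
\[
\partial_\xi K(x,\xi) = -\alpha \xi |(x,\xi)|^{-\alpha-2},\qquad \Delta_{(x,\xi)}K(x,\xi) = \alpha(\alpha+1-d)|(x,\xi)|^{-\alpha-2}.
\]
Convolving with $(\rho-\bar\rho)\otimes\delta_0$ and writing $M := |(\cdot,\cdot)|^{-\alpha-2}\star((\rho-\bar\rho)\otimes\delta_0)$, we obtain $\partial_\xi\K = -\alpha\xi M$ and $\Delta\K = \alpha(\alpha+1-d)M$. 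Since $\gamma = \alpha+3-d$, this yields
\[
\Delta_\gamma \K = \Delta\K + \frac{\gamma}{\xi}\partial_\xi\K = \alpha(\alpha+1-d-\gamma)M = -2\alpha M,
\]
so \emph{pointwise} on $\R^d\times\R_+$ we have $\xi^2(\Delta_\gamma\K)^2 = 4(\partial_\xi\K)^2$, equivalently $\xi^{\gamma-2}(\partial_\xi\K)^2 = \tfrac14\xi^\gamma(\Delta_\gamma\K)^2$. Substituting this into the identity from the first step produces the factor $1-\gamma/4$ and the claim.

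The main obstacle I anticipate is justifying the boundary contributions in the integrations by parts: since $\gamma\in(-1,1)$, the IBPs in $\xi$ introduce weights $\xi^{\gamma-1}$ and $\xi^{\gamma-2}$ that are singular at $\xi=0$, and one also needs decay as $\xi\to\infty$ and $|x|\to\infty$. The factorisation $\partial_\xi\K = -\alpha\xi M$ from the second step conveniently shows $\partial_\xi\K = O(\xi)$ as $\xi\to 0^+$, so $\xi^{\gamma-2}(\partial_\xi\K)^2 = \alpha^2 \xi^\gamma M^2$ is locally integrable near $\xi=0$, and standard Riesz potential bounds on $M$ in terms of $\rho-\bar\rho$ govern the decay at infinity; analogous bounds on $\partial_\xi^2\K$, obtained from one further differentiation of $K$, handle the remaining boundary terms. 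Verifying these estimates carefully and checking that the boundary contributions at $\xi\to 0^+$, $\xi\to\infty$, and $|x|\to\infty$ all vanish is the technical heart of the argument.
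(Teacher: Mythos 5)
Your proposal is correct and follows essentially the same two-step strategy as the paper: first establish the general weighted identity $\iint\xi^\gamma|\nabla_{(x,\xi)}^2 f|^2 = \iint\xi^\gamma|\Delta_\gamma f|^2 - \gamma\iint\xi^{\gamma-2}|\pa_\xi f|^2$ by repeated integration by parts, then invoke the pointwise algebraic relation $\xi^{-1}\pa_\xi\K = \tfrac12\Delta_\gamma\K$ (coming from $\Delta_\gamma K = -2\alpha|(x,\xi)|^{-\alpha-2}$ and $\pa_\xi K = -\alpha\xi|(x,\xi)|^{-\alpha-2}$) to replace the last term by $\tfrac{1}{4}\iint\xi^\gamma|\Delta_\gamma\K|^2$. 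The only difference is cosmetic bookkeeping in the first step — you split the Hessian into pure-$x$, mixed, and pure-$\xi$ blocks and integrate each by parts separately, whereas the paper works directly with the full Hessian inner product via the divergence identity $\xi^\gamma\Delta_\gamma = \nabla_{(x,\xi)}\cdot(\xi^\gamma\nabla_{(x,\xi)})$ — but the two routes produce the same intermediate identity and rely on the same key observation.
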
 
\begin{proof} We begin with the estimate of the first term on the right hand side of \eqref{id_del}. 
\begin{align*}
&\iint_{\R^d\times\R_+}\xi^\gamma \nabla_{(x,\xi)}^2  \K  :  \nabla_{(x,\xi)}^2  \K\,dxd\xi\\
&\quad = -\iint_{\R^d\times\R_+} \nabla_{(x,\xi)}\cdot \lt(\xi^\gamma \nabla_{(x,\xi)}^2  \K  \rt) \cdot \nabla_{(x,\xi)}  \K\,dxd\xi\\
&\quad = -\iint_{\R^d\times\R_+} \xi^\gamma \Delta_\gamma \lt(\nabla_{(x,\xi)} \K\rt) \cdot \nabla_{(x,\xi)}  \K\,dxd\xi\\
&\quad = -\iint_{\R^d\times\R_+} \xi^\gamma \Delta_{(x,\xi)} \nabla_{(x,\xi)} \K \cdot \nabla_{(x,\xi)} \K\,dxd\xi - \iint_{\R^d\times\R_+} \pa_\xi(\xi^{\gamma})\pa_\xi (\nabla_{(x,\xi)}\K ) \cdot \nabla_{(x,\xi)} \K\,dxd\xi\\
&\quad =: \sfI_1 + \sfI_2,
\end{align*}
due to \eqref{dg}. Here, we further estimate $\sfI_1$ as
\begin{align*}
\sfI_1&= \iint_{\R^d\times\R_+} \Delta_{(x,\xi)} \K \nabla_{(x,\xi)}\cdot(\xi^\gamma \nabla_{(x,\xi)}\K)\,dxd\xi\\
&=  \iint_{\R^d\times\R_+} \xi^\gamma (\Delta_{(x,\xi)} \K) (\Delta_\gamma \K)\,dxd\xi\\
&= \iint_{\R^d\times\R_+} \xi^\gamma | \Delta_\gamma \K|^2\,dxd\xi -\gamma\iint_{\R^d\times\R_+} \xi^{\gamma-1} (\pa_\xi \K) (\Delta_\gamma \K)\,dxd\xi\\
&= \iint_{\R^d\times\R_+} \xi^\gamma | \Delta_\gamma \K|^2\,dxd\xi  -\iint_{\R^d\times\R_+}\pa_\xi( \xi^{\gamma}) (\pa_\xi \K) (\Delta_\gamma \K)\,dxd\xi.
\end{align*}
For $\sfI_2$, by the integration by parts, we deduce
\begin{align*}
\sfI_2 &= \iint_{\R^d\times\R_+} (\pa_\xi \K) \nabla_{(x,\xi)}\cdot \lt(\pa_\xi(\xi^\gamma) \nabla_{(x,\xi)} \K \rt)\,dxd\xi\\
&=  \iint_{\R^d\times\R_+} (\pa_\xi \K) \pa_\xi(\xi^\gamma) (\Delta_{(x,\xi)} \K )\,dxd\xi +  \iint_{\R^d\times\R_+} \pa_\xi^2 (\xi^\gamma)  |\pa_\xi \K |^2\,dxd\xi\\
&= \iint_{\R^d\times\R_+} (\pa_\xi \K) \pa_\xi(\xi^\gamma) (\Delta_{\gamma} \K )\,dxd\xi -\iint_{\R^d\times\R_+} \frac{\gamma}{\xi}\pa_\xi (\xi^\gamma)  |\pa_\xi \K |^2\,dxd\xi\\
&\quad +  \iint_{\R^d\times\R_+} \pa_\xi^2 (\xi^\gamma)  |\pa_\xi \K |^2\,dxd\xi\cr
&= \iint_{\R^d\times\R_+} (\pa_\xi \K) \pa_\xi(\xi^\gamma) (\Delta_{\gamma} \K )\,dxd\xi -\gamma \iint_{\R^d\times\R_+} \xi^{\gamma-2} |\pa_\xi \K |^2\,dxd\xi.
\end{align*}
We then combine the above two estimates to find
\[
\sfI_1 + \sfI_2= \iint_{\R^d\times\R_+} \xi^\gamma | \Delta_\gamma \K|^2\,dxd\xi -\gamma \iint_{\R^d\times\R_+} \xi^{\gamma-2} |\pa_\xi \K |^2\,dxd\xi.
\]
Finally, observe that
\[
\Delta_\gamma \K = -2\alpha \int_{\R^d} \frac{(\rho-\bar\rho)(y)}{|(x-y, \xi)|^{\alpha+2}}\,dy \quad \mbox{and} \quad \frac{\pa_\xi}{\xi}\K = -\alpha  \int_{\R^d} \frac{(\rho-\bar\rho)(y)}{|(x-y, \xi)|^{\alpha+2}}\,dy = \frac12\Delta_\gamma \K,
\]
which gives
\[
\iint_{\R^d\times\R_+} \xi^{\gamma-2} |\pa_\xi\K|^2 \,dxd\xi = \iint_{\R^d\times\R_+} \xi^{\gamma} \lt|\frac{\pa_\xi}{\xi}\K\rt|^2 \,dxd\xi = \frac14\iint_{\R^d\times\R_+} \xi^{\gamma} |\Delta_\gamma \K|^2 \,dxd\xi.
\]
This completes the proof.
\end{proof}

\begin{proof}[Proof of Theorem \ref{thm_main} when $0 \vee (d-4)<\alpha<d-2$]  
By using \eqref{dg2} and Lemma \ref{lem_dg} (i), we first find  
\[\begin{aligned}
\int_{\R^d}& (\rho-\bar\rho) u \cdot \nabla K \star(\rho-\bar\rho)\,dx\\
&=\iint_{\R^d\times\R_+} \xi^\gamma (\Delta_\gamma^2  \K )   \lt(\nabla_{(x,\xi)} \K \cdot (u(x),0)\rt)\,dxd\xi\\
&= \iint_{\R^d\times\R_+} \xi^\gamma (\Delta_\gamma  \K) \, \Delta_\gamma \lt(\nabla_{(x,\xi)} \K \cdot (u(x),0)\rt)\,dxd\xi\\
&= \frac12\iint_{\R^d\times\R_+} \nabla_{(x,\xi)}|\Delta_\gamma  \K|^2 \cdot (u(x)\xi^\gamma, 0)\,dxd\xi + \iint_{\R^d\times\R_+} \xi^\gamma (\Delta_\gamma  \K) [\Delta_\gamma, (u(x),0)] \cdot \nabla_{(x,\xi)}  \K\,dxd\xi\\
&=  -\frac12\iint_{\R^d\times\R_+}(\nabla_x \cdot u)\xi^\gamma |\Delta_\gamma  \K|^2 \,dxd\xi + \iint_{\R^d\times\R_+} \xi^\gamma (\Delta_\gamma  \K) [\Delta_\gamma, (u(x),0)] \cdot \nabla_{(x,\xi)} \K\,dxd\xi,
\end{aligned}\]
where the first term on the right hand side can be easily bounded from above by
\[
\|\nabla_x \cdot u\|_{L^\infty}\iint_{\R^d\times\R_+}\xi^\gamma |\Delta_\gamma  \K|^2 \,dxd\xi. 
\]
On the other hand, the term with commutator can be estimated as
\begin{align*}
[ \Delta_\gamma, (u(x),0) ]\cdot\nabla_{(x,\xi)} \K 
&=[\Delta_{(x,\xi)}, (u(x),0)]\cdot\nabla_{(x,\xi)} \K \cr
&= (\Delta_x u(x),0) \cdot \nabla_{(x,\xi)} \K + 2 \nabla_{(x,\xi)}(u(x),0) : \nabla_{(x,\xi)}^2 \K\cr
&= \Delta_x u \cdot \nabla_x \K + 2 \nabla_x u : \nabla_x^2 \K,
\end{align*}
where we used
\[
[\Delta_\gamma, f(x)] g = [\Delta_{(x,\xi)}, f(x)]g
\]
for any $f: \R^d \to \R^d \times \R$ and $g: \R^d \times \R_+ \to \R^d \times \R$ with $f \in \mc^2(\R^d)$ and $g \in \mc^2(\R^d \times \R_+)$.

This implies
\[\begin{aligned}
& \iint_{\R^d\times\R_+} \xi^\gamma (\Delta_\gamma  \K) [\Delta_\gamma, (u(x),0)] \cdot \nabla_{(x,\xi)} \K\,dxd\xi\\
&\quad =  \iint_{\R^d\times\R_+}\xi^\gamma (\Delta_\gamma  \K)  (\Delta_x u \cdot \nabla_x \K) \,dxd\xi + 2 \iint_{\R^d\times\R_+}\xi^\gamma (\Delta_\gamma  \K)  \nabla_x u : \nabla_x^2 \K\,dxd\xi\\
&\quad =: \sfJ_1 + \sfJ_2.
\end{aligned}\]
For $\sfJ_1$, we use H\"older's inequality to get
\[\begin{aligned}
\sfJ_1 &\le \int_{\R_+} \xi^\gamma \|\Delta_\gamma \K\|_{L_x^2} \|\Delta_x u \cdot \nabla_x \K\|_{L^2}\,d\xi\\
&\le C\int_{\R_+} \xi^\gamma  \|\Delta_\gamma \K\|_{L_x^2} \|\Delta u\|_{L^d} \| \nabla_x \K\|_{L_x^{\frac{2d}{d-2}}}\,d\xi\\
&\le C \int_{\R_+} \xi^\gamma  \|\Delta_\gamma \K\|_{L_x^2} \|\nabla_x^2 \K\|_{L_x^2}\,d\xi\\
&\le C\|\xi^\gamma \Delta_\gamma \K\|_{L^2} \|\xi^\gamma \nabla_x^2 \K\|_{L^2},
\end{aligned}\]
where $C > 0$ depends on $\|\Delta u\|_{L^\infty(0,T;L^d)}$. 

For $\sfJ_2$, we have
\[\begin{aligned}
\sfJ_2&\le2 \|\xi^{\gamma/2} \Delta_\gamma \K\|_{L^2} \lt(\int_{\R_+} \xi^\gamma \lt( \int_{\R^d} |\nabla_x u|^2 |\nabla_x^2 \K|^2 \,dx\rt)d\xi \rt)^{1/2}\\
&\le C \|\xi^{\gamma/2} \Delta_\gamma \K\|_{L^2} \lt(\iint_{\R^d\times\R_+} \xi^\gamma  |\nabla_x^2 \K|^2 \,dxd\xi \rt)^{1/2}.
\end{aligned}\]
Combining all of the above observations implies
\[
\int_{\R^d} (\rho-\bar\rho) u \cdot \nabla K \star(\rho-\bar\rho)\,dx \leq C\|\xi^{\gamma/2} \Delta_\gamma \K\|_{L^2}\lt(\|\xi^{\gamma/2} \Delta_\gamma \K\|_{L^2} + \|\xi^{\gamma/2} \nabla_x^2 \K\|_{L^2}\rt).
\]
Since $|\gamma|\le 1$, we use Lemma \ref{lem_dg2} to have
\[
\|\xi^{\gamma/2} \nabla_x^2 \K\|_{L^2} \leq \|\xi^{\gamma/2} \nabla_{(x,\xi)}^2 \K\|_{L^2} \leq 2\|\xi^{\gamma/2} \Delta_\gamma \K\|_{L^2}.
\]
This together with Lemma \ref{lem_dg} (ii) yields
\[
\begin{aligned}
\int_{\R^d} (\rho-\bar\rho) u \cdot \nabla K \star(\rho-\bar\rho)\,dx &\leq C  \iint_{\R^d\times\R_+} \xi^\gamma | \Delta_\gamma K\star((\rho-\bar\rho)\otimes \delta_0))|^2\,dxd\xi\cr
&=C\intr (\rho -\bar\rho) K \star (\rho - \bar\rho)\,dx,
\end{aligned}
\]
where $C>0$ depends on $\|\nabla u\|_{L^\infty}$ and $\|\Delta u\|_{L^\infty(0,T;L^d)}$.
This combined with Remark \ref{rmk_ip} completes the proof.
 \end{proof}

%%%%%%%%%%%%%%%%%%%%%%%%%%%%%%%%%%%%%%%%%%%%%%%%%
%
%
% 
%
%
%%%%%%%%%%%%%%%%%%%%%%%%%%%%%%%%%%%%%%%%%%%%%%%%%

\subsection{General case: $0\vee (d-2j-2) <\alpha < d-2j$, $j \in \N$}
From now on, we consider the general case. We first observe that for $0 \vee (d-2j-2) < \alpha < d-2j$ with $j \in \N$
 \[
 \xi^\gamma (-\Delta_\gamma)^{j+1} K\star(\rho\otimes\delta_0)(x,\xi) = c_{\alpha,d,j} \rho(x)\otimes \delta(\xi) \quad \mbox{in} \quad \R^d \times \R_+ 
\]
for some $c_{\alpha,d,j} > 0$, which will be again assumed to be unity in the sequel. Here $\gamma = \alpha - d + 2j +1 \in (-1,1)$. Note that
\[
\int_{\R^d} (K\star\rho)\rho\, dx =  \left\{\begin{array}{lcll}\displaystyle \iint_{\R^d \times \R_+}  \xi^\gamma \lt| (-\Delta_\gamma)^{\frac{j+1}{2}} K\star\rho\rt|^2 dxd\xi & \mbox{if $j$ is odd}, & \\[4mm]
\displaystyle \iint_{\R^d \times \R_+}  \xi^\gamma \lt| \nabla_{(x,\xi)} (-\Delta_\gamma)^{\frac{j}{2}} K\star\rho\rt|^2 dxd\xi & \mbox{if $j$ is even}.  &
 \end{array}\right.
\]
Indeed, for odd $j$, by using the integration by parts, we obtain 
\begin{align*}
\int_{\R^d} (K\star\rho)\rho\, dx &= \iint_{\R^d \times \R_+}  \xi^\gamma (-\Delta_\gamma)^{j+1} K\star(\rho\otimes\delta_0)(x,\xi) K\star(\rho\otimes\delta_0)(x,\xi)\,dxd\xi\cr
&= \iint_{\R^d \times \R_+}  \xi^\gamma \lt| (-\Delta_\gamma)^{\frac{j+1}{2}} K\star\rho\rt|^2 dxd\xi,
\end{align*}
 due to Lemma \ref{lem_dg} (i). On the other hand, if $j$ is even, then we use \eqref{dg} and Lemma \ref{lem_dg} (i) to find
 \begin{align*}
&\int_{\R^d} (K\star\rho)\rho\, dx \cr
&\quad = \iint_{\R^d \times \R_+}  \xi^\gamma (-\Delta_\gamma)^{j+1} K\star(\rho\otimes\delta_0)(x,\xi) K\star(\rho\otimes\delta_0)(x,\xi)\,dxd\xi\cr
&\quad =\iint_{\R^d \times \R_+}  \xi^\gamma (-\Delta_\gamma)^{\frac j2+1} K\star(\rho\otimes\delta_0)(x,\xi) (-\Delta_\gamma)^{\frac j2}K\star(\rho\otimes\delta_0)(x,\xi)\,dxd\xi\cr
&\quad = - \iint_{\R^d \times \R_+}  \nabla_{(x,\xi)} \cdot \lt(\xi^\gamma \nabla_{(x,\xi)} (-\Delta_\gamma)^{\frac j2} K\star(\rho\otimes\delta_0)(x,\xi)\rt) (-\Delta_\gamma)^{\frac j2}K\star(\rho\otimes\delta_0)(x,\xi)\,dxd\xi\cr
&\quad = \iint_{\R^d \times \R_+}  \xi^\gamma \lt| \nabla_{(x,\xi)} \lt((-\Delta_\gamma)^{\frac{j}{2}} K\star\rho\rt)\rt|^2 dxd\xi.
\end{align*}
Before presenting the technical lemma, we make simple but important observations. For notational simplicity, we set
\[
\K_\ell := \int_{\R^d} \frac{(\rho-\bar\rho)(y)}{|(x-y,\xi)|^{\alpha+2\ell}}\,dy, \quad 0 \le \ell\le j, \quad \K_0 := \K.
\]
On the one hand, for $0 \le \ell \le j-1$,
\[
\Delta_{(x,\xi)}\K_\ell = \Delta_{(x,\xi)}\int_{\R^d} \frac{(\rho-\bar\rho)(y)}{|(x-y,\xi)|^{\alpha+2\ell}}\,dy = -(\alpha+2\ell)(d-1-\alpha-2\ell) \K_{\ell+1},
\]
and
\bq\label{rel_dif1}
\frac{\gamma}{\xi}\pa_\xi \K_\ell = -(\alpha+2\ell) \gamma  \K_{\ell+1},
\eq
which means
\bq\label{rel_dif1_2}
\Delta_\gamma \K_\ell = -(\alpha+2\ell)(2j-2\ell)\K_{\ell+1},
\eq
due to $\gamma = \alpha - d + 2j +1$. From this, we can deduce that
\bq\label{rel_dif2}
\Delta_\gamma^\ell \K = (-1)^\ell\lt[ \prod_{q=0}^{\ell-1}(\alpha+2q)(2j-2q)\rt]\K_\ell, \quad 1\le \ell\le j.
\eq
On the other hand, observe that for any ${\bf v} = {\bf v}(x) \in \R^d$ and $p \in \N \cup \{0\}$
\begin{align*}
\frac{\gamma\pa_\xi}{\xi} \lt( ({\bf v}, 0)\cdot \nabla_{(x,\xi)}\pa_x^p \K_\ell\rt) &=\frac{\gamma\pa_\xi}{\xi}\lt( {\bf v}\cdot \nabla_{x}\pa_x^p \K_\ell\rt)\\
&=  {\bf v}\cdot \nabla_{x}\pa_x^p \lt(\frac{\gamma\pa_\xi}{\xi}\K_\ell\rt)\\
&= -(\alpha+2\ell)\gamma   {\bf v}\cdot \nabla_{x}\pa_x^p \K_{\ell+1}\\
&=  -(\alpha+2\ell)\gamma ({\bf v},0)\cdot \nabla_{(x,\xi)}\pa_x^p \K_{\ell+1}
\end{align*}
and
\begin{align*}
&\Delta_{(x,\xi)}\lt( ({\bf v}, 0)\cdot \nabla_{(x,\xi)}\pa_x^p\K_\ell\rt) \\
&=(\Delta_x {\bf v},0)\cdot \nabla_{(x,\xi)}\pa_x^p \K_\ell + 2\nabla_{(x,\xi)}({\bf v},0) : \nabla_{(x,\xi)}^2 \pa_x^p\K_\ell + ({\bf v},0)\cdot \nabla_{(x,\xi)}\pa_x^p\Delta_{(x,\xi)}\K_{\ell} \\
&= (\Delta_x {\bf v},0)\cdot \nabla_{(x,\xi)}\pa_x^p \K_\ell +2\sum_{j=1}^d (\pa_j{\bf v}, 0)\cdot \nabla_{(x,\xi)}\pa_j \pa_x^p \K_\ell -(\alpha+2\ell)(d-1-\alpha-2\ell) ({\bf v},0)\cdot \nabla_{(x,\xi)}\pa_x^p\K_{\ell+1}.
\end{align*}
This together with \eqref{rel_dif1_2} yields
\begin{align}\label{rel_gam}
\begin{aligned}
&\Delta_\gamma\lt( ({\bf v}, 0)\cdot \nabla_{(x,\xi)}\pa_x^p\K_\ell\rt) \\
&= (\Delta_x {\bf v},0)\cdot \nabla_{(x,\xi)}\pa_x^p \K_\ell +2\sum_{j=1}^d (\pa_j {\bf v}, 0)\cdot \nabla_{(x,\xi)}\pa_j \pa_x^p \K_\ell + ({\bf v},0)\cdot \nabla_{(x,\xi)}\pa_x^p\Delta_\gamma\K_{\ell}\\
&= \Delta_x {\bf v}\cdot\nabla_x \pa_x^p \K_\ell +2\sum_{j=1}^d \pa_j {\bf v} \cdot \nabla_x \pa_j \pa_x^p \K_\ell + {\bf v}\cdot \nabla_x\pa_x^p \Delta_\gamma\K_{\ell}.
\end{aligned}
\end{align}
Thus, we obtain
\bq\label{rel_gam_0}\begin{aligned}
&\Delta_\gamma((u,0)\cdot \nabla_{(x,\xi)}\K)\\
&= \Delta_\gamma((u,0)\cdot \nabla_{(x,\xi)}\K_0)\\
&=  (\Delta_x u,0)\cdot \nabla_{(x,\xi)}\K_0 +2\sum_{j=1}^d (\pa_j u, 0)\cdot \nabla_{(x,\xi)}\pa_j \K_0 +(u,0)\cdot \nabla_{(x,\xi)}\Delta_\gamma \K_0\\
&=  \Delta_x u\cdot\nabla_x  \K_0 +2\sum_{j=1}^d \pa_j u \cdot \nabla_x \pa_j \K_0 -\alpha(4m-2) u\cdot \nabla_x\K_1,
\end{aligned}\eq
and moreover,
\[\begin{aligned}
&\Delta_\gamma^2((u,0)\cdot \nabla_{(x,\xi)}\K)\\
&= \Delta_\gamma \lt((\Delta_x u,0)\cdot \nabla_{(x,\xi)}\K\rt) + \Delta_\gamma\lt(2\sum_{j=1}^d (\pa_j u, 0) \cdot  \nabla_{(x,\xi)}\pa_j\K \rt) + \Delta_\gamma \lt( (u,0) \cdot \nabla_{(x,\xi)} \Delta_\gamma \K\rt) \\
&= (\Delta_x^2 u,0)\cdot \nabla_{(x,\xi)}\K_0 + 4\sum_{j=1}^d (\Delta_x \pa_j u,0)\cdot \nabla_{(x,\xi)}\pa_j \K_0 + 2(\Delta_x u,0)\cdot \nabla_{(x,\xi)}\Delta_\gamma\K_0\\
&\quad + 4\sum_{j_1=1}^d\sum_{j_2=1}^d (\pa_{j_1} \pa_{j_2}u,0)\cdot\nabla_{(x,\xi)}\pa_{j_1} \pa_{j_2} \K_0 +4\sum_{j=1}^d(\pa_j u,0) \cdot \nabla_{(x,\xi)} \pa_j \Delta_\gamma\K_0 +(u,0)\cdot\nabla_{(x,\xi)}\Delta_\gamma^2 \K_0.\\
\end{aligned}\]
Hence, we can easily deduce from \eqref{rel_gam} and \eqref{rel_gam_0} that $\Delta_\gamma^m((u,0)\cdot\nabla_{(x,\xi)}\K)$ consists of
\[
(\Delta_{x}^{\ell_1} \pa_x^\beta u,0)\cdot \nabla_{(x,\xi)} \pa_x^\beta \Delta_\gamma^{\ell_2}\K = \Delta_{x}^{\ell_1} \pa_x^\beta u \cdot \nabla_{x} \pa_x^\beta  \Delta_\gamma^{\ell_2} \K , \quad \ell_1 + \ell_2 + |\beta| = m,
\]
which can be proved by induction on $m$.

\begin{lemma}\label{lem_leib}
Suppose $d-2j-2<\alpha<d-2j$ for some $j \in \N$ and let $\gamma := \alpha-d+2j+1 \in (-1,1)$. Then we have
\[
\Delta_\gamma^m ((u,0)\cdot\nabla_{(x,\xi)}\K) = \sum_{\ell_1+\ell_2+p = m}\frac{m!}{\ell_1!\ell_2!p!} 2^p\sum_{j_1,\dots,j_p=1}^d (\Delta_x^{\ell_1} \pa_{j_1} \cdots \pa_{j_p}u,0) \cdot \nabla_{(x,\xi)} (\pa_{j_1}\cdots\pa_{j_p} \Delta_\gamma^{\ell_2}\K)
\]
for every $0\le m \le j$.
\end{lemma}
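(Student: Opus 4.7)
The plan is to prove the lemma by induction on $m$. The base case $m=0$ is immediate: the right-hand side collapses to the single term with $(\ell_1,\ell_2,p)=(0,0,0)$, which is precisely $(u,0)\cdot\nabla_{(x,\xi)}\K$. (One may also take $m=1$ as the base case, which is exactly the identity \eqref{rel_gam_0}.)

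For the inductive step, suppose the claim holds for some $m<j$. I would apply $\Delta_\gamma$ to the formula and treat each summand separately. The key is the following generalization of \eqref{rel_gam}: for any $\mathbf{v}=\mathbf{v}(x)\in\R^d$, any multi-index $(j_1,\dots,j_p)$ and any $\ell_2\ge 0$,
\begin{align*}
\Delta_\gamma\bigl[(\mathbf{v},0)\cdot\nabla_{(x,\xi)}(\pa_{j_1}\cdots\pa_{j_p}\Delta_\gamma^{\ell_2}\K)\bigr]
&= \Delta_x\mathbf{v}\cdot\nabla_x\pa_{j_1}\cdots\pa_{j_p}\Delta_\gamma^{\ell_2}\K\\
&\quad+2\sum_{j=1}^d \pa_j\mathbf{v}\cdot\nabla_x\pa_j\pa_{j_1}\cdots\pa_{j_p}\Delta_\gamma^{\ell_2}\K\\
&\quad+\mathbf{v}\cdot\nabla_x\pa_{j_1}\cdots\pa_{j_p}\Delta_\gamma^{\ell_2+1}\K.
\end{align*}
This is precisely \eqref{rel_gam} applied with $\pa_x^p$ replaced by $\pa_{j_1}\cdots\pa_{j_p}$ and with $\K_\ell$ replaced by $\Delta_\gamma^{\ell_2}\K$; it is legitimate because pure $x$-derivatives commute with $\Delta_\gamma$, so the same manipulation of cross terms goes through. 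Substituting $\mathbf{v}=\Delta_x^{\ell_1}\pa_{j_1}\cdots\pa_{j_p}u$ in each summand then produces three contributions: one of type $(\ell_1+1,\ell_2,p)$, one of type $(\ell_1,\ell_2,p+1)$ carrying an extra factor of $2$ and one extra summation index, and one of type $(\ell_1,\ell_2+1,p)$.

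Regrouping the result of $\Delta_\gamma^{m+1}((u,0)\cdot\nabla_{(x,\xi)}\K)$ by the triple $(\ell_1,\ell_2,p)$ in the new expansion, the coefficient in front of a term of type $(\ell_1,\ell_2,p)$ with $\ell_1+\ell_2+p=m+1$ becomes
\[
\frac{m!}{(\ell_1-1)!\,\ell_2!\,p!}\,2^p+\frac{m!}{\ell_1!\,(\ell_2-1)!\,p!}\,2^p+\frac{m!}{\ell_1!\,\ell_2!\,(p-1)!}\,2^{p-1}\cdot 2,
\]
where vanishing factorials are interpreted as making the corresponding term absent. The trinomial Pascal identity
\[
\binom{m+1}{\ell_1,\ell_2,p}=\binom{m}{\ell_1-1,\ell_2,p}+\binom{m}{\ell_1,\ell_2-1,p}+\binom{m}{\ell_1,\ell_2,p-1}
\]
collapses this sum into $\tfrac{(m+1)!}{\ell_1!\ell_2!p!}\,2^p$, which is exactly what the claimed formula asserts at level $m+1$. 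The extra factor of $2$ in the middle contribution of the Leibniz step pairs correctly with the $2^{p-1}$ coming from the inductive hypothesis, so the powers of $2$ are consistent.

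The only substantive bookkeeping issue will be verifying that every produced term is put in the symmetric form $(\Delta_x^{\ell_1}\pa_{j_1}\cdots\pa_{j_p}u,0)\cdot\nabla_{(x,\xi)}(\pa_{j_1}\cdots\pa_{j_p}\Delta_\gamma^{\ell_2}\K)$ with matched derivative indices: since in the middle contribution the new index $j$ is appended to both the $u$-factor and the $\K$-factor, this matching is preserved, and after summing over $j$ it yields a term with $p+1$ symmetric indices. The restriction $0\le m\le j$ ensures $\ell_2\le j$, so $\Delta_\gamma^{\ell_2}\K$ is meaningful via \eqref{rel_dif2} at every stage, and no other subtlety arises.
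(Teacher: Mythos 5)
Your proof is correct and takes essentially the same route as the paper's: induction on $m$ using the Leibniz-type identity \eqref{rel_gam} and recombination of coefficients. You streamline the bookkeeping by invoking the trinomial Pascal identity $\binom{m+1}{\ell_1,\ell_2,p}=\binom{m}{\ell_1-1,\ell_2,p}+\binom{m}{\ell_1,\ell_2-1,p}+\binom{m}{\ell_1,\ell_2,p-1}$ in one stroke, where the paper carries out the same recombination by explicitly separating the interior terms ($\ell_1,\ell_2,p\ge 1$) from the boundary cases; the content is identical.
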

\begin{proof} Since the proof is rather lengthy and technical, for the smooth flow of reading, we postpone it to Appendix \ref{app:lem_leib}.
\end{proof}

Then we move on to the next claim.
\begin{lemma}\label{lem_tech2} For $p, \ell \in\N$ with $p+2\ell \le j+1$ and $p \ge 2$, the following inequality holds:
\begin{align}\label{even_2}
\begin{aligned}
&\iint_{\R^d\times\R_+} \xi^\gamma |\nabla_x^{p} \Delta_\gamma^\ell \K|^2\,dxd\xi +\iint_{\R^d\times\R_+} \xi^\gamma |\pa_\xi \nabla_x^{p-1} \Delta_\gamma^\ell \K|^2\,dxd\xi\cr
&\quad \le \lt[\prod_{q=0}^{p-1} \lt(1-\frac{\gamma}{(2j-2\ell-2q)^2}\rt)\rt] \iint_{\R^d\times\R_+} \xi^\gamma |\nabla_x^{p-2[p/2]}\Delta_\gamma^{[p/2]+\ell} \K|^2\,dxd\xi,
\end{aligned}
\end{align}
where each term in the product is positive.
\end{lemma}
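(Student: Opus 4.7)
The strategy is induction on $p$, decreasing $p$ by $2$ and incrementing $\ell$ by $1$ at each reduction step until arriving at the base cases $p \in \{2, 3\}$. The central computational tool throughout is the identity
\[
\pa_\xi \Delta_\gamma^\ell \K = \frac{\xi}{2j-2\ell}\,\Delta_\gamma^{\ell+1}\K,
\]
which follows immediately from \eqref{rel_dif1} and \eqref{rel_dif1_2}; together with the commutativity $[\Delta_\gamma,\nabla_x]=0$, it allows one to trade a $\pa_\xi$-derivative for one more application of $\Delta_\gamma$ (up to multiplication by $\xi$).

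First I would rewrite the LHS of \eqref{even_2} as the single weighted norm $\iint \xi^\gamma |\nabla_{(x,\xi)}\nabla_x^{p-1}\Delta_\gamma^\ell\K|^2\,dxd\xi$, which holds since the $\nabla_x$ and $\pa_\xi$ components of $\nabla_{(x,\xi)}$ exactly reproduce the two summands. Applying Lemma \ref{lem_dg}(i) componentwise together with $[\Delta_\gamma, \nabla_x] = 0$ transforms this into $-\iint \xi^\gamma \nabla_x^{p-1}\Delta_\gamma^\ell\K \cdot \nabla_x^{p-1}\Delta_\gamma^{\ell+1}\K\,dxd\xi$. A further integration by parts in $x$ followed by the algebraic decomposition $\Delta_x = \Delta_\gamma - (\pa_\xi^2 + (\gamma/\xi)\pa_\xi)$ recasts this as a main contribution proportional to $\iint \xi^\gamma |\nabla_x^{p-2}\Delta_\gamma^{\ell+1}\K|^2\,dxd\xi$ plus a residual involving $(\pa_\xi^2 + (\gamma/\xi)\pa_\xi)\Delta_\gamma^\ell\K$. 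Applying the key identity twice, the residual reduces to a linear combination of integrals of $\Delta_\gamma^{\ell+1}\K$ and $\xi^2\Delta_\gamma^{\ell+2}\K$, and the latter is absorbed via one more IBP in $\xi$; the boundary terms vanish thanks to $\gamma + 1 > 0$ (from $\gamma \in (-1,1)$) together with the integrability of $\K_{\ell+1}|_{\xi=0}$ ensured by the hypothesis $p + 2\ell \le j+1$.

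Iterating the reduction step $(p, \ell) \to (p-2, \ell+1)$ produces a telescoping product matching the claimed expression: for even $p$ one descends to the base case $p = 2$ and applies the base-case estimate to extract the final two factors, while for odd $p$ one descends to $p = 3$ and invokes a Lemma \ref{lem_dg2}-type computation to peel off the remaining three factors. In both cases the residual integral is exactly $\iint \xi^\gamma |\nabla_x^{p-2[p/2]}\Delta_\gamma^{[p/2]+\ell}\K|^2\,dxd\xi$. The principal obstacle is the meticulous algebraic bookkeeping of the coefficients: a direct IBP computation at the base case $p = 2$ yields the equality
\[
I_2^\ell = \left(1-\frac{\gamma+1}{2(2j-2\ell)}\right)\iint \xi^\gamma|\Delta_\gamma^{\ell+1}\K|^2\,dxd\xi
\]
with a single scalar that does not manifestly factor as $(1-\gamma/(2j-2\ell)^2)(1-\gamma/(2j-2\ell-2)^2)$; matching this (and the analogous scalars at each inductive step) to the claimed product $\prod_{q=0}^{p-1}(1-\gamma/(2j-2\ell-2q)^2)$ is a delicate algebraic manipulation that crucially exploits the positivity of every factor, as guaranteed by the hypothesis, to upgrade the intermediate equalities to the stated upper bound.
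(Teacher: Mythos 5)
Your overall strategy (induction on $p$, trading $\pa_\xi$-derivatives for extra powers of $\Delta_\gamma$ via $\pa_\xi\Delta_\gamma^\ell\K=\frac{\xi}{2j-2\ell}\Delta_\gamma^{\ell+1}\K$, integration by parts and $[\Delta_\gamma,\nabla_x]=0$) is the same as the paper's, but the intermediate decomposition is genuinely different: you push the residual through $\Delta_x=\Delta_\gamma-(\pa_\xi^2+\tfrac{\gamma}{\xi}\pa_\xi)$ and absorb the highest-order piece with an extra $\xi$-IBP, whereas the paper routes the $\pa_\xi^2$ piece through Lemma~\ref{lem_tech} and keeps a sign-definite remainder $-\iint\xi^\gamma|\pa_\xi\nabla_{(x,\xi)}\nabla_x^{p-2}\Delta_\gamma^\ell\K|^2$. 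Your version yields the clean one-step \emph{equality} $\iint\xi^\gamma|\nabla_{(x,\xi)}\nabla_x^{p-1}\Delta_\gamma^\ell\K|^2=\bigl(1-\frac{1+\gamma}{2(2j-2\ell)}\bigr)\iint\xi^\gamma|\nabla_x^{p-2}\Delta_\gamma^{\ell+1}\K|^2$, which is consistent with (and tighter than) the paper's claim \eqref{claim_est}; this part of your computation is actually correct.

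The gap is precisely where you wave it off as ``a delicate algebraic manipulation'': you do not prove that your per-step scalar is dominated by the claimed factor, and a proof needs that. In fact the needed comparison is elementary once identified. Setting $N=2j-2\ell$, the inequality $1-\frac{1+\gamma}{2N}\le 1-\frac{\gamma}{N^2}$ is equivalent to $N(1+\gamma)\ge 2\gamma$, i.e.\ $N+\gamma(N-2)\ge 0$, which holds for all $\gamma\in(-1,1)$ and $N\ge 2$. That gives one factor $(1-\frac{\gamma}{(2j-2\ell)^2})$ per reduction step, which is exactly what the paper's proof of \eqref{claim_est} also produces. You also appear not to have noticed that the reduction $(p,\ell)\to(p-2,\ell+1)$ produces only $[p/2]$ factors, not $p$; your worry that the single scalar ``does not manifestly factor as $(1-\gamma/(2j-2\ell)^2)(1-\gamma/(2j-2\ell-2)^2)$'' is a symptom of chasing the wrong product: the paper's own inductive step applies the hypothesis as $\prod_{q=0}^{[(p-1)/2]-1}$, consistent with the product running over $q=0,\dots,[p/2]-1$ rather than $q=0,\dots,p-1$ as the displayed statement reads. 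Once you target the $[p/2]$-factor product and insert the elementary scalar comparison above, your argument closes; as written, the proof stops just short of that.
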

For the proof, we give a technical lemma below.
\begin{lemma}\label{lem_tech} For any $\gamma \in (-1,1)$, we have
\[
\iint_{\R^d \times \R_+} \xi^\gamma \pa_\xi \Delta_\gamma f \, \pa_\xi f\,dxd\xi = -\gamma \iint_{\R^d \times \R_+} \xi^{\gamma-2} |\pa_\xi f|^2 \,dxd\xi - \iint_{\R^d \times \R_+} \xi^\gamma |\pa_\xi \nabla_{(x,\xi)} f|^2\,dxd\xi.
\]
\end{lemma}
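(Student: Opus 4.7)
\textbf{Proof proposal for Lemma \ref{lem_tech}.} The plan is to reduce the identity to a computation involving the divergence form \eqref{dg} of $\Delta_\gamma$ after exchanging $\pa_\xi$ and $\Delta_\gamma$ through a commutator that generates the $\xi^{\gamma-2}$ term.

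First I would compute the commutator $[\pa_\xi, \Delta_\gamma]$. Writing $\Delta_\gamma = \Delta_{(x,\xi)} + \frac{\gamma}{\xi}\pa_\xi$ and setting $g := \pa_\xi f$, a direct differentiation gives
\[
\pa_\xi \Delta_\gamma f = \Delta_{(x,\xi)} g + \pa_\xi\!\lt(\frac{\gamma}{\xi}\pa_\xi f\rt) = \Delta_{(x,\xi)} g + \frac{\gamma}{\xi}\pa_\xi g - \frac{\gamma}{\xi^2}g = \Delta_\gamma g - \frac{\gamma}{\xi^2}g.
\]
Substituting this into the left-hand side produces two contributions:
\[
\iint_{\R^d \times \R_+}\!\!\! \xi^\gamma \pa_\xi \Delta_\gamma f \,\pa_\xi f\,dxd\xi
= \iint_{\R^d \times \R_+}\!\!\! \xi^\gamma g \,\Delta_\gamma g \, dxd\xi
- \gamma \iint_{\R^d \times \R_+}\!\!\! \xi^{\gamma-2} |g|^2\,dxd\xi.
\]
The second term already matches the desired $-\gamma \iint \xi^{\gamma-2}|\pa_\xi f|^2$ piece.

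For the first term, I would use the divergence form \eqref{dg}, namely $\xi^\gamma \Delta_\gamma g = \nabla_{(x,\xi)} \cdot (\xi^\gamma \nabla_{(x,\xi)} g)$, and integrate by parts on $\R^d \times \R_+$:
\[
\iint_{\R^d \times \R_+} \xi^\gamma g \, \Delta_\gamma g \,dxd\xi = \iint_{\R^d \times \R_+} g\, \nabla_{(x,\xi)} \cdot (\xi^\gamma \nabla_{(x,\xi)} g)\,dxd\xi = -\iint_{\R^d \times \R_+} \xi^\gamma |\nabla_{(x,\xi)} g|^2\,dxd\xi,
\]
which is exactly $-\iint \xi^\gamma |\pa_\xi \nabla_{(x,\xi)} f|^2\,dxd\xi$ since $g = \pa_\xi f$. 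Combining the two contributions yields the claimed identity.

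The one place that warrants care is the justification of the integration by parts and the vanishing of the boundary contributions on $\{\xi = 0\}$ and at infinity. Decay at spatial infinity and at $\xi = \infty$ is standard given that $f$ represents a convolution with a compactly decaying density difference; the delicate point is the trace on $\{\xi = 0\}$, where the weight $\xi^\gamma$ could blow up for $\gamma < 0$. The hypothesis $\gamma \in (-1,1)$ is exactly what guarantees local integrability of $\xi^\gamma$ near $\xi = 0$, and for the two boundary integrals one encounters, $\xi^\gamma g \,\pa_\xi g\big|_{\xi = 0}$ and $\xi^{\gamma}\pa_\xi g \cdot g\big|_{\xi = 0}$, the extension representation yields $g = \pa_\xi f = O(\xi)$ (in fact an odd function of $\xi$) near $\xi = 0$, which makes both traces integrable and in fact vanishing. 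This is the only step that is nontrivial; the algebra itself is a one-line commutator computation followed by a single integration by parts.
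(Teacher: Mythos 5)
Your proof is correct, and in substance it is the same integration-by-parts computation as the paper's. The paper's own proof expands $\Delta_\gamma$ into $\Delta_{(x,\xi)} + \gamma\xi^{-1}\pa_\xi$, integrates by parts on the $\Delta_{(x,\xi)}$ piece, and then observes that two of the four resulting terms cancel (since $\pa_\xi(\xi^\gamma) = \gamma\xi^{\gamma-1}$). Your organization — first commuting $\pa_\xi$ past $\Delta_\gamma$ via the identity $[\pa_\xi, \Delta_\gamma] = -\gamma\xi^{-2}$, which produces the $\xi^{\gamma-2}$ term at once, and then integrating by parts once using the divergence form \eqref{dg} — is slightly cleaner: it avoids the need to notice a cancellation and more directly exploits a commutator relation the paper itself records (in the remark that $\Delta_\gamma$ and $\nabla_{(x,\xi)}$ fail to commute only in the $\xi$-component by the factor $-\gamma\xi^{-2}\pa_\xi$). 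Your closing remark about the $\{\xi = 0\}$ trace, using that $\pa_\xi\K$ is odd and $O(\xi)$ near $\xi = 0$ for the extended potential, is a valid justification of the integration by parts; the paper leaves this implicit.
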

\begin{proof} By definition of $\Delta_\gamma$, we easily find
\begin{align*}
&\iint_{\R^d \times \R_+} \xi^\gamma \pa_\xi \Delta_\gamma f \, \pa_\xi f\,dxd\xi \cr
&\quad = \iint_{\R^d \times \R_+} \xi^\gamma \pa_\xi \Delta_{(x,\xi)} f \, \pa_\xi f\,dxd\xi + \iint_{\R^d \times \R_+} \xi^\gamma \pa_\xi (\gamma \xi^{-1} \pa_\xi f) \, \pa_\xi f\,dxd\xi\cr
&\quad = - \iint_{\R^d \times \R_+} \xi^\gamma |\pa_\xi \nabla_{(x,\xi)} f|^2 \, dxd\xi - \iint_{\R^d \times \R_+} \pa_\xi (\xi^\gamma) \pa_\xi^2 f \, \pa_\xi f\,dxd\xi\cr
&\qquad - \gamma \iint_{\R^d \times \R_+} \xi^{\gamma-2} |\pa_\xi f|^2 \,dxd\xi + \gamma \iint_{\R^d \times \R_+} \xi^{\gamma-1} \pa_\xi^2 f \, \pa_\xi f \,dxd\xi. 
\end{align*}
\end{proof}
\begin{proof}[Proof of Lemma \ref{lem_tech2}] We first claim that for $p \in \N$
\begin{align}\label{claim_est}
\begin{aligned}
&\iint_{\R^d\times\R_+} \xi^\gamma |\nabla_x^{p} \Delta_\gamma^\ell \K|^2\,dxd\xi +\iint_{\R^d\times\R_+} \xi^\gamma |\pa_\xi \nabla_x^{p-1} \Delta_\gamma^\ell \K|^2\,dxd\xi \cr
&\quad = \lt(1-\frac{\gamma}{(2j-2\ell)^2}\rt) \iint_{\R^d\times\R_+} \xi^\gamma |\nabla_x^{p-2} \Delta_\gamma^{\ell+1} \K|^2\,dxd\xi   - \iint_{\R^d\times\R_+} \xi^\gamma |\pa_\xi \nabla_{(x,\xi)}\nabla_x^{p-2} \Delta_\gamma^\ell \K|^2\,dxd\xi.
\end{aligned}
\end{align}
Note that 
\begin{align*}
&\iint_{\R^d\times\R_+} \xi^\gamma \nabla_x^2 \pa_x^{p-2} \Delta_\gamma^\ell \K : \nabla_x^2 \pa_x^{p-2} \Delta_\gamma^\ell \K\,dxd\xi \cr
&\quad = \iint_{\R^d\times\R_+} \xi^\gamma \nabla_{(x,\xi)}\nabla_x \pa_x^{p-2} \Delta_\gamma^\ell \K : \nabla_{(x,\xi)}\nabla_x  \pa_x^{p-2} \Delta_\gamma^\ell \K\,dxd\xi - \iint_{\R^d\times\R_+} \xi^\gamma |\pa_\xi \nabla_x \pa_x^{p-2}\Delta_\gamma^\ell \K|^2\,dxd\xi.
\end{align*}
On the other hand, the first term on the right hand side of the above equality can be estimated as
\begin{align*}
 & - \iint_{\R^d\times\R_+} \xi^\gamma \Delta_\gamma \nabla_x \pa_x^{p-2}\Delta_\gamma^\ell  \K \cdot \nabla_x \pa_x^{p-2}\Delta_\gamma^\ell \K\,dxd\xi\cr
 &\quad = - \iint_{\R^d\times\R_+} \xi^\gamma \nabla_{(x,\xi)} \pa_x^{p-2} \Delta_\gamma^{\ell+1} \K \cdot \nabla_{(x,\xi)} \pa_x^{p-2} \Delta_\gamma^\ell \K\,dxd\xi \cr
 &\qquad+ \iint_{\R^d\times\R_+} \xi^\gamma \pa_\xi  \pa_x^{p-2}\Delta_\gamma^{\ell+1} \K \, \pa_\xi \pa_x^{p-2} \Delta_\gamma^\ell \K\,dxd\xi\cr
 &\quad = \iint_{\R^d\times\R_+} \xi^\gamma |\pa_x^{p-2} \Delta_\gamma^{\ell+1} \K|^2\,dxd\xi - \gamma \iint_{\R^d\times\R_+} \xi^\gamma \lt|  \pa_x^{p-2}\frac{\pa_\xi}{\xi}\lt( \Delta_\gamma^\ell \K\rt)\rt|^2\,dxd\xi\cr 
 &\qquad - \iint_{\R^d\times\R_+} \xi^\gamma |\pa_\xi \nabla_{(x,\xi)}\pa_x^{p-2} \Delta_\gamma^\ell\K|^2\,dxd\xi,
\end{align*}
due to Lemma \ref{lem_tech}. Here, we combine \eqref{rel_dif1} with \eqref{rel_dif2} to get
\[
\frac{\pa_\xi}{\xi}\lt( \Delta_\gamma^\ell \K\rt) = (-1)^\ell \lt[\prod_{q=0}^{\ell-1}(\alpha+2q)(2j-2q) \rt]\frac{\pa_\xi}{\xi} \K_\ell = \frac{1}{2j-2\ell}\Delta_\gamma^{\ell+1}\K,
\]
and this gives the desired relation. Here, note that $p \ge 2$ implies $2\ell \le j-1$, which implies $|\gamma/(2j-2\ell)^2| \le 1/4.$\\

We now prove the first assertion by induction on $p$. If $p=2$ or 3, it simply follows from \eqref{claim_est}. Let us assume that \eqref{even_2} holds for some $p \geq 3$.\\

First, notice from \eqref{claim_est} that 
\begin{align*}
&\iint_{\R^d\times\R_+} \xi^\gamma |\nabla_x^{p + 1} \Delta_\gamma^\ell\K|^2\,dxd\xi +\iint_{\R^d\times\R_+} \xi^\gamma |\pa_\xi \nabla_x^{p} \Delta_\gamma^\ell \K|^2\,dxd\xi\cr
&\quad =\lt(1-\frac{\gamma}{(2j-2\ell)^2}\rt) \iint_{\R^d\times\R_+} \xi^\gamma |\nabla_x^{p-1} \Delta_\gamma^{\ell+1}  \K|^2\,dxd\xi - \iint_{\R^d\times\R_+} \xi^\gamma |\pa_\xi \nabla_{(x,\xi)}\nabla_x^{p-1}  \Delta_\gamma^\ell \K|^2\,dxd\xi\\
&\le \lt(1-\frac{\gamma}{(2j-2\ell)^2}\rt) \iint_{\R^d\times\R_+} \xi^\gamma |\nabla_x^{p-1} \Delta_\gamma^{\ell+1}  \K|^2\,dxd\xi .
\end{align*}
On the other hand, by the assumption of the inductive hypothesis, we get
\begin{align*}
&\iint_{\R^d\times\R_+} \xi^\gamma |\nabla_x^{p-1} \Delta_\gamma^{\ell+1}  \K|^2\,dxd\xi \cr
&\le \lt[\prod_{q=0}^{[(p-1)/2]-1} \lt(1-\frac{\gamma}{(2j-2\ell-2q)^2} \rt)\rt] \iint_{\R^d\times\R_+} \xi^\gamma |\nabla_x^{p-1-2[(p-1)/2]}\Delta_\gamma^{\ell+[(p-1)/2]+1} \K|^2\,dxd\xi. \\
\end{align*}
Since $|\gamma/(2j-2\ell)^2|\le 1/4$, combining the above two inequalities concludes the assertion.
\end{proof}

As a direct consequence of Lemma \ref{lem_tech2}, we have the following proposition.
\begin{proposition}
For $0 \vee (d-2j - 2) < \alpha < d- 2j$ with $j \in \N$, we have
\[
\iint_{\R^d\times\R_+} \xi^\gamma |\nabla_x^{j+1} \K|^2\,dxd\xi  \leq  C\left\{\begin{array}{lcll}\displaystyle \iint_{\R^d \times \R_+}  \xi^\gamma | (-\Delta_\gamma)^{\frac{j+1}{2}} \K |^2 dxd\xi & \mbox{if $j$ is odd} & \\[4mm]
\displaystyle \iint_{\R^d \times \R_+}  \xi^\gamma | \nabla_x (-\Delta_\gamma)^{\frac{j}{2}} \K  |^2 dxd\xi & \mbox{if $j$ is even}  &
 \end{array}\right.,
\]
where $\gamma = \alpha - d + 2m + 1 \in (0,1)$ and $C=C(\alpha, d, j)$ is a positive constant.
\end{proposition}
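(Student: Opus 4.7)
The plan is to derive this Proposition as an immediate corollary of Lemma \ref{lem_tech2} applied with $p = j+1$ and $\ell = 0$. First I would verify admissibility: the two hypotheses $p + 2\ell \le j+1$ and $p \ge 2$ reduce to $j+1 \le j+1$ and $j \ge 1$ respectively, both of which hold in the regime of this subsection (the case $j=0$ corresponds to the Riesz range $d-2<\alpha<d$ treated separately).

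Granting the lemma with these parameters, I obtain
\[
\iint_{\R^d \times \R_+} \xi^\gamma |\nabla_x^{j+1} \K|^2 \, dxd\xi \le C \iint_{\R^d \times \R_+} \xi^\gamma |\nabla_x^{(j+1)-2[(j+1)/2]} \Delta_\gamma^{[(j+1)/2]} \K|^2 \, dxd\xi,
\]
and a split by the parity of $j$ yields the two cases in the statement. If $j$ is odd, then $j+1$ is even, so $[(j+1)/2] = (j+1)/2$ and $(j+1) - 2[(j+1)/2] = 0$, producing the right-hand side $C\iint \xi^\gamma |(-\Delta_\gamma)^{(j+1)/2} \K|^2 dxd\xi$. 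If $j$ is even, then $j+1$ is odd, so $[(j+1)/2] = j/2$ and $(j+1) - 2[(j+1)/2] = 1$, producing the right-hand side $C\iint \xi^\gamma |\nabla_x (-\Delta_\gamma)^{j/2} \K|^2 dxd\xi$. These are exactly the two expressions in the Proposition.

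It remains to certify that the constant $C = C(\alpha, d, j)$ delivered by Lemma \ref{lem_tech2} is finite. That constant is a product of factors of the form $1 - \gamma/(2j - 2q)^2$ over the relevant range of $q$; since $|\gamma| < 1$ and, for $0 \le q \le [(j+1)/2] - 1$, the denominator $(2j-2q)^2$ stays bounded below by $(j+1)^2$ when $j$ is odd and by $(j+2)^2$ when $j$ is even, each factor lies in a bounded subinterval of $(0,\infty)$, so their product is a positive finite constant depending only on $\alpha, d, j$. I do not anticipate any genuine obstacle here, since the whole derivation is a substitution of parameters into a previously established inequality; the only point requiring care is keeping track of $p - 2[p/2]$ to read off the correct dichotomy between the purely Laplacian form and the mixed $\nabla_x \Delta_\gamma$ form on the right-hand side.
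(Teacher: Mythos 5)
Your proposal is correct and follows the exact route the paper takes: the paper states this proposition is ``a direct consequence of Lemma \ref{lem_tech2}'' with the specialization $p=j+1$, $\ell=0$, and your parity split and constant-finiteness check are the right way to make that explicit. One small remark: the product in Lemma \ref{lem_tech2} is displayed as $\prod_{q=0}^{p-1}$, which would vacuously produce a zero denominator at $q=j$ under your substitution, but the proof of that lemma actually runs the induction in steps of two and yields only $[p/2]$ factors (matching the range $0\le q\le[(j+1)/2]-1$ you use when bounding the constant); you have implicitly, and correctly, read the product with this corrected upper limit.
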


Now we provide the details on the proof of Theorem \ref{thm_main} in the general case.

 \begin{proof}[Proof of Theorem \ref{thm_main}] 
 We separately estimate two cases as follows:\\
 
 \noindent $\diamond$ (Case A: $d-4m<\alpha<d-4m+2$, $m \in \N$) Note that this corresponds to the case when $j=2m-1$ is odd. We use Lemma \ref{lem_leib} and the following type of Gagliardo--Nirenberg interpolation inequality
 \bq\label{gn_ineq}
 \|f\|_{L^{\frac dk}} \le C \|\nabla f\|_{L^{\frac{d}{k+1}}}, \quad k+1<d
 \eq
 to estimate
  \begin{align*}
&\int_{\R^d} (\rho-\bar\rho) u \cdot \nabla K \star(\rho-\bar\rho)\,dx\\
&=\iint_{\R^d\times\R_+} \xi^\gamma \Delta_\gamma^m \K \Delta_\gamma^m( (u,0)\cdot \nabla_{(x,\xi)}\K)\,dxd\xi\\
&= -\frac12\iint_{\R^d\times\R_+}(\nabla_x \cdot u) \xi^\gamma |\Delta_\gamma^m \K|^2\,dxd\xi + \iint_{\R^d\times\R_+}\xi^\gamma \Delta_\gamma^m \K (\Delta_x^m u,0)\cdot \nabla_{(x,\xi)}\K\,dxd\xi\\
&\quad + \iint_{\R^d\times\R_+} \xi^\gamma \Delta_\gamma^m \K \sum_{\substack{\ell_1+\ell_2+p = m\\ \ell_1,\ell_2 \neq m}}\frac{m!}{\ell_1!\ell_2!p!} 2^p\sum_{j_1,\dots,j_p=1}^d (\Delta_x^{\ell_1} \pa_{j_1} \cdots \pa_{j_p}u,0) \cdot \nabla_{(x,\xi)} (\pa_{j_1}\cdots\pa_{j_p} \Delta_\gamma^{\ell_2}\K)\,dxd\xi\\
&= -\frac12\iint_{\R^d\times\R_+}(\nabla_x \cdot u) \xi^\gamma |\Delta_\gamma^m \K|^2\,dxd\xi + \iint_{\R^d\times\R_+}\xi^\gamma \Delta_\gamma^m \K \lt(\Delta_x^m u\cdot \nabla_x \K\rt)\,dxd\xi\\
&\quad + \iint_{\R^d\times\R_+} \xi^\gamma \Delta_\gamma^m \K \sum_{\substack{\ell_1+\ell_2+p = m\\ \ell_1,\ell_2 \neq m}}\frac{m!}{\ell_1!\ell_2!p!} 2^p\sum_{j_1,\dots,j_p=1}^d \Delta_x^{\ell_1} \pa_{j_1} \cdots \pa_{j_p}u \cdot \nabla_x (\pa_{j_1}\cdots\pa_{j_p} \Delta_\gamma^{\ell_2}\K)\,dxd\xi\\
&\le \frac{\|\nabla\cdot u\|_{L^\infty}}{2}\iint_{\R^d\times\R_+} \xi^\gamma |\Delta_\gamma^m \K|^2\,dxd\xi +\int_{\R_+} \xi^\gamma \|\Delta_\gamma^m \K\|_{L_x^2} \|\Delta_x^m u\cdot \nabla_x \K\|_{L_x^2}\,d\xi\\
&\quad +C \sum_{\substack{\ell_1+\ell_2+p = m\\ \ell_1,\ell_2 \neq m}}\sum_{j_1,\dots,j_p=1}^d \int_{\R_+}\xi^\gamma \|\Delta_\gamma^m \K\|_{L_x^2} \|\Delta_x^{\ell_1} \pa_{j_1} \cdots \pa_{j_p}u   \cdot \nabla_x (\pa_{j_1}\cdots\pa_{j_p} \Delta_\gamma^{\ell_2}\K)\|_{L_x^2}  \,d\xi \\
&\le \frac{\|\nabla\cdot u\|_{L^\infty}}{2}\iint_{\R^d\times\R_+} \xi^\gamma |\Delta_\gamma^m \K|^2\,dxd\xi + \int_{\R_+} \xi^\gamma \|\Delta_\gamma^m \K\|_{L_x^2} \|\Delta_x^m u\|_{L_x^{\frac{d}{2m-1}}}\|\nabla_x \K\|_{L_x^{\frac{2d}{d-2(2m-1)}}}\\
&\quad +C  \sum_{\substack{\ell_1+\ell_2+p = m\\ \ell_1,\ell_2 \neq m}} \int_{\R_+}\xi^\gamma \|\Delta_\gamma^m \K\|_{L_x^2} \|\nabla_x^{2\ell_1+p} u\|_{L^{\frac{d}{2\ell_1+p-1}}}  \|\nabla_x^{p+1}  \Delta_\gamma^{\ell_2}\K\|_{L_x^{\frac{2d}{d-2(2\ell_1+p-1)}}}  \,d\xi\\
&\le C\iint_{\R^d\times\R_+} \xi^\gamma |\Delta_\gamma^m \K|^2\,dxd\xi +\int_{\R_+} \xi^\gamma \|\Delta_\gamma^m \K\|_{L_x^2} \|\Delta_x^m u\|_{L_x^{\frac{d}{2m-1}}}\|\nabla_x^{2m} \K\|_{L_x^2} \\
&\quad +C  \sum_{\substack{\ell_1+\ell_2+p = m\\ \ell_1,\ell_2 \neq m}} \int_{\R_+}\xi^\gamma \|\Delta_\gamma^m \K\|_{L_x^2} \|\nabla_x^{2m-1}u\|_{L^{\frac{d}{2m-2}}}  \| \nabla_x^{2\ell_1 + 2p}  \Delta_\gamma^{\ell_2}\K\|_{L_x^2}  \,d\xi\\
&\le C\iint_{\R^d\times\R_+} \xi^\gamma |\Delta_\gamma^m \K|^2\,dxd\xi +C \|\xi^{\gamma/2} \Delta_\gamma^m \K\|_{L_{x,\xi}^2} \sum_{\ell_2=0}^{m-1}\lt(\iint_{\R^d\times\R_+}\xi^\gamma  |\nabla_x^{2(m-\ell_2)} \Delta_\gamma^{\ell_2}\K|^2  \,d\xi \rt)^{1/2}\\
&\le  C\iint_{\R^d\times\R_+} \xi^\gamma |\Delta_\gamma^m \K|^2\,dxd\xi,
\end{align*}
where $C=C(\|\nabla\cdot u\|_{L^\infty}, \|\Delta_x^m u\|_{L^{\frac{d}{2m-1}}}, \|\nabla_x^{2m-1} u\|_{L^{\frac{d}{2m-2}}}, \alpha, d, m)$ is a positive constant and we again denoted by  $\K = K\star((\rho-\bar\rho)\otimes \delta_0)$. Since $d-4m<\alpha<d-4m+2$, we get
\[
m = \lt[\frac{d-\alpha}{4}\rt], \quad \frac{d}{2m-1} = \frac{d}{[(d-\alpha)/2]}, \quad \mbox{and}\quad \frac{d}{2m-2} = \frac{d}{[(d-\alpha)/2]-1}.
\]
\vspace{0.3cm}

 \noindent $\diamond$ (Case B: $d-4m-2<\alpha<d-4m$, $m \in \N$) Note that this corresponds to the case when $j=2m$ is even. Similarly to the above, we use Lemma \ref{lem_leib} to yield
\begin{align*}
&\int_{\R^d} (\rho-\bar\rho) u \cdot \nabla K \star(\rho-\bar\rho)\,dx\\
&=\iint_{\R^d\times\R_+} \xi^\gamma \Delta_\gamma^{m+1} \K \Delta_\gamma^m( (u,0)\cdot \nabla_{(x,\xi)}\K)\,dxd\xi\\
&= \iint_{\R^d\times\R_+}\xi^\gamma \lt(\Delta_\gamma^{m+1} \K\rt) (u,0)\cdot \nabla_{(x,\xi)}\Delta_\gamma^m \K\,dxd\xi + \iint_{\R^d\times\R_+}\xi^\gamma\lt( \Delta_\gamma^{m+1} \K \rt)(\Delta_x^m u,0)\cdot \nabla_{(x,\xi)}\K\,dxd\xi\\
&\quad + \iint_{\R^d\times\R_+} \xi^\gamma \Delta_\gamma^{m+1} \K \sum_{\substack{\ell_1+\ell_2+p = m\\ \ell_1,\ell_2 \neq m}}\frac{m!}{\ell_1!\ell_2!p!} 2^p\sum_{j_1,\dots,j_p=1}^d (\Delta_x^{\ell_1} \pa_{j_1} \cdots \pa_{j_p}u,0) \cdot \nabla_{(x,\xi)} (\pa_{j_1}\cdots\pa_{j_p} \Delta_\gamma^{\ell_2}\K)\,dxd\xi\\
&=:\sfK_1 + \sfK_2 + \sfK_3.
\end{align*}
For $\sfK_1$,
\[\begin{aligned}
\sfK_1&= -\iint_{\R^d\times\R_+}\xi^\gamma \lt(\nabla_{(x,\xi)}\Delta_\gamma^m\K \otimes \nabla_{(x,\xi)}\Delta_\gamma^m \K \rt): \nabla_{(x,\xi)}(u,0)\,dxd\xi\\
&\quad + \frac12\iint_{\R^d\times\R_+} (\nabla_x\cdot u)\xi^\gamma |\nabla_{(x,\xi)} \Delta_\gamma^m \K|^2\,dxd\xi\\
&\le C\iint_{\R^d\times\R_+} \xi^\gamma |\nabla_{(x,\xi)} \Delta_\gamma^m \K|^2\,dxd\xi,
\end{aligned}\]
where $C=C(\|\nabla u\|_{L^\infty})$ is a positive constant.\\

\noindent For $\sfK_2$, we use Lemma \ref{lem_tech2} to get
\[\begin{aligned}
\sfK_2 &= -\iint_{\R^d\times\R_+} \xi^\gamma \nabla_{x} \Delta_\gamma^m \K \cdot \nabla_x(\Delta_x^m u) \cdot \nabla_x\K\,dxd\xi\\
&\quad -\iint_{\R^d\times\R_+} \xi^\gamma \nabla_{(x,\xi)} \Delta_\gamma^m \K \cdot \lt(\nabla_{(x,\xi)}\nabla_x \K \cdot \Delta_x^m u \rt)\,dxd\xi\\
&\le \int_{\R_+} \xi^\gamma \|\nabla_x \Delta_\gamma^m \K\|_{L_x^2}\|\nabla_x \Delta_x^m u\|_{L_x^{\frac{d}{2m}}} \|\nabla_x \K\|_{L_x^{\frac{2d}{d-4m}}}d\xi\\
&\quad + \int_{\R_+} \xi^\gamma \|\nabla_x \Delta_\gamma^m \K\|_{L_x^2}\| \Delta_x^m u\|_{L_x^{\frac{d}{2m-1}}} \|\nabla_{(x,\xi)}(\nabla_x \K)\|_{L_x^{\frac{2d}{d-2(2m-1)}}}d\xi\\
&\le C\int_{\R_+}\xi^\gamma \|\nabla_x \Delta_\gamma^m \K\|_{L_x^2} \|\nabla_{(x,\xi)}(\nabla_x^{2m}\K)\|_{L_x^2}\,d\xi\\
&\le C\iint_{\R^d\times\R_+} \xi^\gamma |\nabla_{(x,\xi)} \Delta_\gamma^m \K|^2\,dxd\xi,
\end{aligned}\]
where $C=C(\|\nabla_x \Delta_x^m u\|_{L^{\frac{d}{2m}}}, \|\nabla_x^{2m} u\|_{L^{\frac{d}{2m-1}}}, \alpha,d,m)$ is a positive constant.\\

\noindent Finally, for $\sfK_3$, we also use Lemma \ref{lem_tech2} and \eqref{gn_ineq} to get
\begin{align*}
\sfK_3&= -\sum_{\substack{\ell_1+\ell_2+p = m\\ \ell_1,\ell_2 \neq m\\ 1\le j_1, \dots, j_p \le d}} \frac{m!}{\ell_1!\ell_2!p!} 2^p \iint_{\R^d\times\R_+} \xi^\gamma \nabla_{(x,\xi)}\Delta_\gamma^m \K  \cr
&\hspace{6cm}\cdot \nabla_{(x,\xi)} \lt[(\Delta_x^{\ell_1} \pa_{j_1} \cdots \pa_{j_p}u,0) \cdot \nabla_{(x,\xi)} (\pa_{j_1}\cdots\pa_{j_p} \Delta_\gamma^{\ell_2}\K)\rt]\,dxd\xi\\
&= - \sum_{\substack{\ell_1+\ell_2+p = m\\ \ell_1,\ell_2 \neq m \\ 1\le j_1,\dots,j_p\le d}} \frac{m!}{\ell_1!\ell_2!p!} 2^p\iint_{\R^d\times\R_+} \xi^\gamma \lt(\nabla_{(x,\xi)}\Delta_\gamma^m \K \otimes   \nabla_{(x,\xi)} (\pa_{j_1}\cdots\pa_{j_p} \Delta_\gamma^{\ell_2}\K)\rt) \cr
&\hspace{9cm}: \nabla_{(x,\xi)}(\Delta_x^{\ell_1} \pa_{j_1} \cdots \pa_{j_p}u,0) \,dxd\xi\\
&\quad -  \sum_{\substack{\ell_1+\ell_2+p = m\\ \ell_1,\ell_2 \neq m \\ 1\le j_1,\dots,j_p\le d}} \frac{m!}{\ell_1!\ell_2!p!} 2^p\iint_{\R^d\times\R_+} \xi^\gamma \nabla_{(x,\xi)}\Delta_\gamma^m \K \cdot   \nabla_{(x,\xi)}^2 (\pa_{j_1}\cdots\pa_{j_p} \Delta_\gamma^{\ell_2}\K)\cdot (\Delta_x^{\ell_1} \pa_{j_1} \cdots \pa_{j_p}u,0) \,dxd\xi\\
&\le C \sum_{\substack{\ell_1+\ell_2+p = m\\ \ell_1,\ell_2 \neq m}}\int_{\R_+} \xi^\gamma \|\nabla_{(x,\xi)}\Delta_\gamma^m \K\|_{L_x^2} \|\nabla_{(x,\xi)}\nabla_x^p \Delta_\gamma^{\ell_2}\K\|_{L^{\frac{2d}{d-2(2\ell_1 + p)}}} \|\nabla_x^{2\ell_1 + p+1} u\|_{L_x^{\frac{d}{2\ell_1 + p}}}\,d\xi\\
&\quad +C \sum_{\substack{\ell_1+\ell_2+p = m\\ \ell_1,\ell_2 \neq m}}\int_{\R_+} \xi^\gamma \|\nabla_{(x,\xi)}\Delta_\gamma^m \K\|_{L_x^2} \|\nabla_{(x,\xi)}\nabla_x^{p+1} \Delta_\gamma^{\ell_2}\K\|_{L^{\frac{2d}{d-2(2\ell_1 + p-1)}}} \|\nabla_x^{2\ell_1 + p} u\|_{L_x^{\frac{d}{2\ell_1 + p-1}}}\,d\xi\\
&\le C\sum_{\substack{\ell_1+\ell_2+p = m\\ \ell_1,\ell_2 \neq m}}\int_{\R_+}  \xi^\gamma \|\nabla_{(x,\xi)}\Delta_\gamma^m \K\|_{L_x^2} \|\nabla_x^{2m} u\|_{L^{\frac{d}{2m-1}}} \|\nabla_{(x,\xi)}\nabla_x^{2(m-\ell_2)}\Delta_\gamma^{\ell_2} \K\|_{L_x^2}\,d\xi\\
&\le C\iint_{\R^d\times\R_+} \xi^\gamma |\nabla_{(x,\xi)} \Delta_\gamma^m \K|^2\,dxd\xi,
\end{align*}
where $C=C(\|\nabla_x^{2m} u\|_{L^{\frac{d}{2m-1}}}, \alpha,d,m)$ is a positive constant. Since $d-4m-2<\alpha<d-4m$, we find
\[
m = \lt[\frac{d-\alpha}{4}\rt], \quad \frac{d}{2m} = \frac{d}{[(d-\alpha)/2]},\quad  \mbox{and} \quad \frac{d}{2m-1} = \frac{d}{[(d-\alpha)/2]-1}.
\]
This concludes the desired results.
\end{proof}

%%%%%%%%%%%%%%%%%%%%%%%%%%%%%%%%%%%%%%%%%%%%%%%
%
%
%
%
%
%
%%%%%%%%%%%%%%%%%%%%%%%%%%%%%%%%%%%%%%%%%%%%%%%
\section{Modulated interaction energy estimates: $\alpha \equiv d$ mod 2.}\label{sec_subCoul2}

In this section, we cover the remaining case, where the exponent $\alpha$ is given as a positive integer equivalent to $d$ modulo 2, i.e. $K$ has the form of
\bq\label{kernel_sing}
K(x) = |x|^{-(d-2m)} \quad \mbox{where} \quad 2m \in \bbn, \quad 2m < d-2.
\eq
In this case, we do not need to introduce the elliptic operator $\Delta_\gamma$ appeared in \eqref{def_dg} to have the modulated energy estimate in Theorem \ref{thm_main}. More precisely, let us take into account two cases; (i) $m$ is even and (ii) $m$ is odd. Then, as mentioned before, in case (i), we can interpret the potential $K$ satisfies 
\[
(-\Delta)^{m} K(x) = c_{m,d}\delta_0(x)
\]
for some $c_{m,d} > 0$. For simplicity of the presentation, we again set $c_{m,d} = 1$ in the rest of this section. From those observations, we have the following identities for the interaction energy:
\[
\int_{\R^d} (K\star\rho)\rho\, dx =  \left\{\begin{array}{lcll}\displaystyle \int_{\R^d} \lt| (-\Delta)^{\frac{m}{2}} K\star\rho\rt|^2\,dx & \mbox{if} & m\equiv 0 &\mbox{mod 2},\\
\ & \ & \ & \\
\displaystyle \int_{\R^d} \lt| \nabla (-\Delta)^{\frac{m-1}{2}} K\star\rho\rt|^2\,dx & \mbox{if} & m \equiv 1 & \mbox{mod 2}.
 \end{array}\right. 
\]

Before we proceed, we give the following modified Moser-type inequality.
\begin{lemma}\label{Moser_lem}
For given $d, k \in \bbn$ with $d\ge2k$. Suppose that $f\in H^k(\R^d)$, $g \in W^{k,\frac{d}{k-1}}(\R^d)$ and $\nabla g \in L^\infty(\R^d)$. Then we have
\[
\|\nabla^k (fg) - f\nabla^k g\|_{L^2} \le C \|\nabla^{k-1} f\|_{L^2}\lt(\|\nabla g\|_{L^\infty} + \|\nabla^k g\|_{L^{\frac{d}{k-1}}} \rt),
\]
where $C> 0$ depends on $k$ and $d$.
\end{lemma}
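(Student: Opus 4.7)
My approach is the classical Kato--Ponce / Moser commutator strategy: expand the left-hand side via the Leibniz rule, estimate each piece by H\"older, and route every factor through Gagliardo--Nirenberg interpolation so that the $f$-derivatives are absorbed into $\|\nabla^{k-1}f\|_{L^2}$ while the $g$-derivatives are controlled by the two given endpoints $\|\nabla g\|_{L^\infty}$ and $\|\nabla^k g\|_{L^{d/(k-1)}}$. The hypothesis $d\ge 2k$ plays the role of keeping every Sobolev/H\"older exponent generated along the way in the admissible range.

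Concretely, I would start from the Leibniz expansion
\[
\nabla^k(fg) - f\nabla^k g = \sum_{j=1}^{k} \binom{k}{j}\, \nabla^j f \otimes \nabla^{k-j} g,
\]
and estimate each summand in $L^2$ separately. For a generic interior index $1 \le j \le k-1$, H\"older's inequality with the scaling-matched exponents
\[
p_j = \frac{2d}{d - 2(k-1-j)}, \qquad q_j = \frac{d}{k-1-j},
\]
reduces the task to bounding $\|\nabla^j f\|_{L^{p_j}}$ and $\|\nabla^{k-j} g\|_{L^{q_j}}$ (positivity of both exponents uses $d\ge 2k$). The first factor is exactly the endpoint Sobolev embedding, $\|\nabla^j f\|_{L^{p_j}} \le C\|\nabla^{k-1} f\|_{L^2}$, because the scaling $j - d/p_j = (k-1) - d/2$ holds by construction. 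The second factor is a Gagliardo--Nirenberg interpolation between the two given endpoints,
\[
\|\nabla^{k-j} g\|_{L^{q_j}} \le C\|\nabla g\|_{L^\infty}^{j/(k-1)}\,\|\nabla^k g\|_{L^{d/(k-1)}}^{(k-1-j)/(k-1)},
\]
with $\theta_j = j/(k-1)$ pinned down by the derivative-count identity $k-j = \theta_j + (1-\theta_j)k$ and consistent with $\tfrac{1}{q_j} = (1-\theta_j)\tfrac{k-1}{d}$. A final Young-type inequality $a^{\theta}b^{1-\theta}\le a+b$ converts this product into the sum $\|\nabla g\|_{L^\infty} + \|\nabla^k g\|_{L^{d/(k-1)}}$ required on the right-hand side.

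The step I expect to be the main obstacle is the endpoint $j=k$, where the summand becomes $\nabla^k f\cdot g$ and carries no derivative on $g$; the pipeline above breaks down because one cannot control $\|\nabla^k f\|_{L^p}$ in terms of $\|\nabla^{k-1}f\|_{L^2}$ alone. Here I would exploit the condition $d\ge 2k$ through the Morrey-type embedding $W^{k,d/(k-1)}\hookrightarrow L^\infty$ (valid since $kd/(k-1) > d$) to control $g$, combined with the identity $\nabla^k f\cdot g = \nabla\cdot(\nabla^{k-1}f\otimes g) - \nabla^{k-1}f \otimes \nabla g$ used dually, so that one derivative is transferred back onto $g$ and the term is recast in the same interior framework. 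Once this endpoint is in place, summing the bounds over $j=1,\dots,k$ yields the claimed inequality with $C = C(k,d)$.
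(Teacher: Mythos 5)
Your treatment of the interior indices $1\le j\le k-1$ is correct and is essentially the paper's proof: H\"older with $p_j=\frac{2d}{d-2(k-1-j)}$, $q_j=\frac{d}{k-1-j}$, the scaling-matched Sobolev embedding $\|\nabla^j f\|_{L^{p_j}}\le C\|\nabla^{k-1}f\|_{L^2}$, and control of the $g$-factor by the two endpoints. The only cosmetic difference is that the paper reaches $\|\nabla^k g\|_{L^{d/(k-1)}}$ by iterating the one-step Gagliardo--Nirenberg inequality \eqref{gn_ineq}, $\|\nabla^\ell g\|_{L^{d/(\ell-1)}}\le C\|\nabla^{\ell+1}g\|_{L^{d/\ell}}\le\cdots\le C\|\nabla^k g\|_{L^{d/(k-1)}}$, whereas you interpolate between $\|\nabla g\|_{L^\infty}$ and $\|\nabla^k g\|_{L^{d/(k-1)}}$ and then apply Young; both routes are fine.

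Where the proposal breaks is precisely the step you flag, and your fix cannot close. There is no $L^2$ bound of the form $\|\nabla^k f\cdot g\|_{L^2}\le C\|\nabla^{k-1}f\|_{L^2}\bigl(\|\nabla g\|_{L^\infty}+\|\nabla^k g\|_{L^{d/(k-1)}}\bigr)$: take a fixed nontrivial bump $g$ and $f_n(x)=n^{-(k-1)}e^{inx_1}\chi(x)$ for a cutoff $\chi$; then $\|\nabla^{k-1}f_n\|_{L^2}$ stays of order $1$ while $\|\nabla^k f_n\cdot g\|_{L^2}$ grows like $n$. The rewriting $\nabla^k f\cdot g=\nabla\cdot(\nabla^{k-1}f\otimes g)-\nabla^{k-1}f\otimes\nabla g$ only helps when you are allowed to integrate the leading divergence against a test function with a spare derivative, which is not available when you must estimate the expression itself in $L^2$; the Morrey embedding of $g$ into $L^\infty$ does not change the derivative count on $f$.

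The underlying issue is a misprint in the statement: the commutator should be $\nabla^k(fg)-g\nabla^k f$, not $\nabla^k(fg)-f\nabla^k g$. If you reindex the paper's first display line $\sum_{\ell=1}^{k}\binom{k}{\ell}(\nabla^{k-\ell}f)(\nabla^\ell g)$ with $m=k-\ell$ you get $\sum_{m=0}^{k-1}\binom{k}{m}(\nabla^m f)(\nabla^{k-m}g)=\nabla^k(fg)-g\nabla^k f$, so the proof as written is a correct proof of the corrected inequality. The application in Section~\ref{sec_subCoul2} also uses this version: there the relevant commutator is $\lt[(-\Delta)^{m/2},u_i\rt]\partial_i h=(-\Delta)^{m/2}(u_i\,\partial_i h)-u_i(-\Delta)^{m/2}\partial_i h$, i.e.\ the role of $g$ is played by $u$ and the role of $f$ by $\partial_i h=\partial_i K\star(\rho-\bar\rho)$. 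Once the statement is corrected, the term you identified as problematic, $\nabla^k f\cdot g$, is exactly the one subtracted off, the Leibniz sum runs over $j=0,\dots,k-1$, and your machinery already covers all of these, including the new endpoint $j=0$ whose contribution $f\nabla^k g$ is controlled by $\|f\|_{L^{2d/(d-2(k-1))}}\|\nabla^k g\|_{L^{d/(k-1)}}\le C\|\nabla^{k-1}f\|_{L^2}\|\nabla^k g\|_{L^{d/(k-1)}}$.
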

\begin{proof}
We use \eqref{gn_ineq} to get
\[\begin{aligned}
\|\nabla^k (fg) - f\nabla^k g\|_{L^2}&= \left\| \sum_{\ell=1}^{k}\binom{k}{\ell} (\nabla^{k-\ell} f) (\nabla^\ell g)\right\|_{L^2}\\
&\le C \|\nabla^{k-1} f\|_{L^2} \|\nabla g\|_{L^\infty} + C\sum_{\ell=2}^k \|(\nabla^{k-\ell} f) (\nabla^\ell g)\|_{L^2}\\
&\le C \|\nabla^{k-1} f\|_{L^2} \|\nabla g\|_{L^\infty} + C\sum_{\ell=2}^k \|\nabla^{k-\ell} f\|_{L^{\frac{2d}{d-2(\ell-1)}}} \|\nabla^\ell g\|_{L^{\frac{d}{\ell-1}}}\\
&\le C\|\nabla^{k-1} f\|_{L^2}\lt( \|\nabla g\|_{L^\infty} +  \|\nabla^k g\|_{L^{\frac{d}{k-1}}}\rt),
\end{aligned}\]
giving the desired result.
\end{proof}

\begin{proof}[Proof of Theorem \ref{thm_main} when $K$ is given by \eqref{kernel_sing}] We divide the proof into two cases; (Case A) $m$ is even and (Case B) $m$ is odd. \newline

\noindent $\diamond$ (Case A: $m$ is even) We apply $k=m= \frac{d-\alpha}{2}$ to Lemma \ref{Moser_lem} and obtain
\begin{align*} 
 &\int_{\R^d} (\rho-\bar\rho) u \cdot \nabla K \star(\rho-\bar\rho)\,dx\\
&\quad =  \int_{\R^d} (-\Delta)^m (K\star(\rho-\bar\rho)) u \cdot \nabla K \star(\rho-\bar\rho)\,dx\\
&\quad =  \int_{\R^d} (-\Delta)^{\frac m2} K\star(\rho-\bar\rho) (-\Delta)^{\frac m2} (u \cdot \nabla K\star(\rho-\bar\rho))\,dx\\
&\quad = \int_{\R^d} (-\Delta)^{\frac m2} K\star(\rho-\bar\rho)\lt[ (-\Delta)^{\frac m2},  u \cdot \nabla\rt] K\star(\rho-\bar\rho))\,dx\\
&\qquad +  \int_{\R^d}  (-\Delta)^{\frac m2} K\star(\rho-\bar\rho)  u \cdot \nabla((-\Delta)^{\frac m2}K\star(\rho-\bar\rho))\,dx\\
&\quad \le C\|(-\Delta)^{\frac m2} K\star(\rho-\bar\rho)\|_{L^2} \|\nabla^m K\star(\rho-\bar\rho)\|_{L^2} \lt(\|\nabla u\|_{L^\infty} + \|\nabla^m u\|_{L^{\frac{d}{m-1}}}\rt)\\
&\qquad + \frac{\|\nabla \cdot u\|_{L^\infty}}{2}\|(-\Delta)^{\frac m2} K\star(\rho-\bar\rho)\|_{L^2}^2\\
&\quad \le C\|(-\Delta)^{\frac m2} K\star(\rho-\bar\rho)\|_{L^2}^2\cr
&\quad = C\int_{\R^d} (\rho-\bar\rho)K\star(\rho-\bar\rho)\,dx,
\end{align*}
where $C > 0$ depends on $\|\nabla u\|_{L^\infty}$ and $\|\nabla^{\frac{d-\alpha}{2}}u\|_{L^{\frac{2d}{d-\alpha-2}}}$  and we used
\[
\|\Delta w\|_{L^2} = \|\nabla^2 w\|_{L^2}.
\]

\medskip

\noindent $\diamond$ (Case B: $m$ is odd) Similarly to Case A, we can get
\begin{align*} 
 &\int_{\R^d} (\rho-\bar\rho) u \cdot \nabla K \star(\rho-\bar\rho)\,dx\\
 &\quad= \int_{\R^d} \nabla\lt((-\Delta)^{\frac{m-1}{2}} K\star(\rho-\bar\rho)\rt)\cdot \nabla \lt((-\Delta)^{\frac{m-1}{2}} (u \cdot \nabla K\star(\rho-\bar\rho))\rt)\,dx\\
 &\quad=  \int_{\R^d} \nabla\lt((-\Delta)^{\frac{m-1}{2}} K\star(\rho-\bar\rho)\rt)\cdot \nabla^2 \lt((-\Delta)^{\frac{m-1}{2}} K\star(\rho-\bar\rho)\rt) \cdot u\,dx\\
  & \qquad+  \int_{\R^d}\nabla\lt((-\Delta)^{\frac{m-1}{2}} K\star(\rho-\bar\rho)\rt)\cdot \lt[\nabla (-\Delta)^{\frac{m-1}{2}},  u\rt] \cdot \nabla K\star(\rho-\bar\rho) \,dx\\
 &\quad\le \frac{\|\nabla \cdot u\|_{L^\infty}}{2} \|\nabla\lt((-\Delta)^{\frac{m-1}{2}} K\star(\rho-\bar\rho)\rt)\|_{L^2}^2\\
 &\qquad + C\|\nabla\lt((-\Delta)^{\frac{m-1}{2}} K\star(\rho-\bar\rho)\rt)\|_{L^2} \|\nabla^m K\star(\rho-\bar\rho)\|_{L^2}  \lt(\|\nabla u\|_{L^\infty} + \|\nabla^m u\|_{L^{\frac{d}{m-1}}}\rt)\\
 &\quad\le C\|\nabla\lt((-\Delta)^{\frac{m-1}{2}} K\star(\rho-\bar\rho)\rt)\|_{L^2}^2 \cr
 &\quad= C\int_{\R^d} (\rho-\bar\rho)K\star(\rho-\bar\rho)\,dx.
\end{align*}
This completes the proof.
\end{proof}

%%%%%%%%%%%%%%%%%%%%%%%%%%%%%%%%%%%%%%%%%%%%%%%
%
%
%
%
%
%
%%%%%%%%%%%%%%%%%%%%%%%%%%%%%%%%%%%%%%%%%%%%%%%

\section{Applications: Quantified asymptotic analysis}\label{sec:appl}

In the section, we establish quantified asymptotic analysis for kinetic equations to derive the aggregation equation, pressureless/isothermal Euler equations with nonlocal singular interactions forces.

%%%%%%%%%%%%%%%%%%%%%%%%%%%%%%%%%%%%%%%%%%%%%%%
%
%
%
%
%
%
%%%%%%%%%%%%%%%%%%%%%%%%%%%%%%%%%%%%%%%%%%%%%%%

\subsection{Small inertia limit of the kinetic equation}

In this subsection, we provide the details of the proof for Theorem \ref{thm_ktoc} on the small inertia limit of the kinetic equation. Let us recall our main asymptotic problem:
\bq\label{main_kin1}
\e\pa_t f^\e + \e v \cdot \nabla_x f^\e - \nabla_v \cdot \lt((\gamma v   + \nabla K \star \rho^\e )f^\e\rt) =0, \quad (x,v) \in \R^d \times \R^d,  \quad t > 0.
\eq
Without loss of generality, we take $\gamma = 1$ in the rest of this section. 

For the proof of Theorem \ref{thm_ktoc}, we first show the quantitative bound on the modulated energies.

\begin{proposition}\label{prop_ktoc} Let the assumptions of Theorem \ref{thm_main} verified. Then we have
\begin{align*}
&\intr (\rho - \rho^\e)K\star(\rho - \rho^\e)\,dx+ \int_0^t \intrr |v - u(x,s) |^2f^\e(x,v,s)\, dxdv ds \cr
&\quad \leq C\e\intrr |v - u_0(x) |^2 f^\e_0(x,v)\,dxdv + C\intr (\rho_0 - \rho^\e_0)K\star(\rho_0 - \rho^\e_0)\,dx + C\e^2
\end{align*}
for all $t \in [0,T]$ and $\e>0$ small enough, where $C>0$ is independent of $\e$.
\end{proposition}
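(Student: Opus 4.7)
The strategy is the modulated energy method adapted to the singular kernel via Theorem \ref{thm_main}. The plan is to introduce the modulated meso-kinetic and interaction energies
\[
\me^\e_{\rm kin}(t) := \frac{\e}{2} \intrr |v - u(x,t)|^2 f^\e\,dxdv, \qquad \me^\e_{\rm int}(t) := \frac12 \intr (\rho - \rho^\e) K\star(\rho - \rho^\e)\,dx,
\]
and to establish the Gr\"onwall-type estimate
\[
\frac{d}{dt}\lt(\me^\e_{\rm kin} + \me^\e_{\rm int}\rt) + \frac12 \intrr |v-u|^2 f^\e\,dxdv \le C \me^\e_{\rm int} + C\e^2.
\]
Together with $\me^\e_{\rm kin}(0) \le \tfrac{\e}{2}\intrr |v-u_0|^2 f_0^\e\,dxdv$ and Gr\"onwall's inequality, this directly yields the assertion (upon discarding the nonnegative $\me^\e_{\rm kin}(t)$).

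To compute $\tfrac{d}{dt}\me^\e_{\rm kin}$, I will test \eqref{main_kin1} against $\tfrac12 |v-u(x,t)|^2$. Using the limit relation $u = -\nabla K\star\rho$ from \eqref{eq_agg} to split the drift as $v+\nabla K\star\rho^\e = (v-u) + \nabla K\star(\rho^\e-\rho)$, integration by parts in $x$ and $v$ gives
\[
\frac{d}{dt}\me^\e_{\rm kin} = -\intrr |v-u|^2 f^\e\,dxdv - \intr \rho^\e(u^\e - u)\cdot\nabla K\star(\rho^\e-\rho)\,dx - \e\intrr (v-u)\cdot Q\, f^\e\,dxdv,
\]
where $Q := \pa_t u + (v\cdot\nabla_x) u$ and the kinetic momentum is $\rho^\e u^\e := \intr v f^\e\,dv$.

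For $\me^\e_{\rm int}$, both $(\rho,u)$ and $(\rho^\e, u^\e)$ satisfy the continuity equation in the sense of distributions (the latter by integrating \eqref{main_kin1} in $v$). The plan is to apply Theorem \ref{thm_main} with the smooth solution placed in the unbarred slot $(\rho,u)$ and the kinetic moments $(\rho^\e,u^\e)$ as $(\bar\rho,\bar u)$, so that the constant in \eqref{res_thm} depends only on the smooth $u$ (and $\alpha, d$); this will yield
\[
\frac{d}{dt}\me^\e_{\rm int} \le \intr \rho^\e(u-u^\e)\cdot \nabla K\star(\rho-\rho^\e)\,dx + 2C\me^\e_{\rm int}.
\]
Using $\nabla K\star(\rho-\rho^\e) = -\nabla K\star(\rho^\e-\rho)$, the cross term above equals $+\intr\rho^\e(u^\e-u)\cdot\nabla K\star(\rho^\e-\rho)\,dx$, which exactly cancels the corresponding cross term in $\tfrac{d}{dt}\me^\e_{\rm kin}$. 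This cancellation is the decisive algebraic point of the argument.

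It remains to control the $\e$-remainder. Since $|Q|^2 \le C(1+|v|^2)$ using $\|\nabla u\|_{L^\infty}$ and $\|\pa_t u\|_{L^\infty}$, and $\intrr(1+|v|^2)f^\e\,dxdv$ is uniformly bounded in $\e$ via a standard second-moment estimate exploiting the linear damping in \eqref{main_kin1}, Cauchy--Schwarz and Young's inequality deliver
\[
\e\lt|\intrr (v-u)\cdot Q\, f^\e\,dxdv\rt| \le \frac12 \intrr |v-u|^2 f^\e\,dxdv + C\e^2,
\]
and the first piece is absorbed into the dissipation. Summing the two modulated-energy identities and invoking Gr\"onwall yields the claim. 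The main delicate point is arranging Theorem \ref{thm_main} so that its constant depends only on the smooth $u$, which is what permits the two cross terms to be written with matching weight $\rho^\e$ and cancel exactly.
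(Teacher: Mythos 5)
Your overall strategy is essentially the paper's: you form the modulated meso-kinetic plus interaction energy, apply Theorem~\ref{thm_main} with the smooth pair $(\rho,u)$ in the unbarred slot so that the constant depends only on $u$, and exploit the exact cancellation of the two cross terms. The algebra checks out: the formula you obtain for $\frac{d}{dt}\me^\e_{\rm kin}$, the sign flip $\nabla K\star(\rho-\rho^\e)=-\nabla K\star(\rho^\e-\rho)$, and the cancellation with the flux coming from Theorem~\ref{thm_main} applied with $(\bar\rho,\bar u)=(\rho^\e,u^\e)$ are all correct. Your quantities are exactly the paper's (Proposition~\ref{prop_ktoc2}) modulated energies rescaled by a global factor of $\e$, so the combination and the decisive algebraic point are the same.

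Where you go astray is in bounding the $\e$-remainder. You set $Q:=\pa_t u+(v\cdot\nabla_x)u$, so $|Q|^2\le C(1+|v|^2)$, and you assert that $\intrr(1+|v|^2)f^\e\,dxdv$ is uniformly bounded in $\e$, \emph{pointwise} in time. That does not follow from the free energy identity (Lemma~\ref{lem_free}): integrating it gives
\[
\frac{\e}{2}\intrr |v|^2 f^\e(t)\,dxdv+\frac12\intr\rho^\e K\star\rho^\e(t)\,dx+\int_0^t\intrr|v|^2 f^\e\,dxdvds
\le\frac{\e}{2}\intrr |v|^2 f_0^\e\,dxdv+\frac12\intr\rho_0^\e K\star\rho_0^\e\,dx,
\]
so the quantity that is uniformly controlled is the \emph{time-integral} $\int_0^T\intrr|v|^2 f^\e\,dxdvds$; the instantaneous second moment $\intrr|v|^2 f^\e(t)\,dxdv$ is only $O(1/\e)$, since the initial interaction energy $\intr\rho_0^\e K\star\rho_0^\e\,dx$ is merely bounded, not small. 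Hence the displayed bound $\e\bigl|\intrr(v-u)\cdot Q\,f^\e\,dxdv\bigr|\le\frac12\intrr|v-u|^2 f^\e\,dxdv+C\e^2$ is not valid pointwise. The conclusion still holds: integrate your inequality in time \emph{before} bounding the remainder, so that the remainder becomes $\e^2\int_0^t\intrr|Q|^2 f^\e\,dxdvds\le C\e^2\int_0^t\intrr(1+|v|^2)f^\e\,dxdvds\le C\e^2$, and then run Gr\"onwall on the resulting integral inequality. The paper sidesteps this entirely in Proposition~\ref{prop_ktoc2} by writing the force error as $e:=\pa_t u+u\cdot\nabla_x u$, which carries no $v$-dependence and is simply bounded in $L^\infty$, giving a pointwise $\e^2\|e\|_{L^\infty}^2$ remainder; your $Q$-based decomposition mirrors the alternative discussed in Remark~\ref{rmk_relax} and works once the moment bound is used in its time-integrated form.
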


\begin{remark}\label{rmk_dbl} It follows from \cite{CC20} $($see also \cite{CC21,C21, FK19}$)$ that 
\[
\d_{BL}^2(\rho(\cdot,t), \rho^\e(\cdot,t)) \leq C\d_{BL}^2(\rho_0, \rho^\e_0) + C\int_0^t \intrr |v - u(x,s)|^2 f^\e(x,v,s)\,dxdvds,
\]
where $C>0$ depends on $\|\nabla_x u\|_{L^\infty}$, but independent of $\e>0$. This together with Proposition \ref{prop_ktoc} implies 
\begin{align*}
&\d_{BL}(\rho, \rho^\e) + \intr (\rho - \rho^\e)K\star(\rho - \rho^\e)\,dx+ \int_0^t \intrr |v - u(x,s) |^2f^\e(x,v,s)\, dxdv ds \cr
&\quad \leq C\d_{BL}(\rho_0, \rho^\e_0) + C\e\intrr |v - u_0(x) |^2 f^\e_0(x,v)\,dxdv + C\intr (\rho_0 - \rho^\e_0)K\star(\rho_0 - \rho^\e_0)\,dx + C\e^2,
\end{align*}
where $C>0$ is independent of $\e>0$.
\end{remark}

Let us first begin with the a priori estimate of free energy for the kinetic equation \eqref{main_kin1}. Since its proof is straightforward, we omit it here.
\begin{lemma}\label{lem_free} Let $f^\e$ be a solution to the equation \eqref{main_kin1} with sufficient integrability. Then we have
\[
\frac{d}{dt}\lt(\frac12\intrr |v|^2 f^\e\,dxdv  +  \frac1{2\e}\intr \rho^\e K \star \rho^\e\,dx\rt) + \frac1\e\intrr |v|^2 f^\e\,dxdv = 0.
\]
In particular, this gives
\[
\int_0^t\intrr |v|^2 f^\e\,dxdvds \leq C
\]
for some $C>0$ independent of $\e$.
\end{lemma}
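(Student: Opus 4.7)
The plan is a direct energy computation in two parts, exploiting the fact that the Fokker--Planck-type velocity operator and the continuity equation produce matching cross terms that cancel exactly.

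First, I would test the kinetic equation \eqref{main_kin1} against $|v|^2/2$ and integrate over $(x,v)$. The spatial transport term $\e v \cdot \nabla_x f^\e$ drops out by integration by parts in $x$ (writing $|v|^2 v\cdot\nabla_x f^\e = \nabla_x\cdot(|v|^2 v f^\e)$). The velocity divergence term, after one integration by parts in $v$, yields
\[
\e\,\frac{d}{dt}\,\frac12\intrr |v|^2 f^\e\,dxdv = -\intrr v\cdot(v + \nabla K\star \rho^\e)f^\e\,dxdv = -\intrr |v|^2 f^\e\,dxdv - \intr (\rho^\e u^\e)\cdot\nabla K\star\rho^\e\,dx,
\]
where $\rho^\e u^\e := \intr v f^\e\,dv$. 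Dividing by $\e$ gives the kinetic energy identity.

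Second, integrating \eqref{main_kin1} in $v$ eliminates the Fokker--Planck-type term and leaves the local continuity equation $\pa_t\rho^\e + \nabla_x\cdot(\rho^\e u^\e) = 0$. Using the symmetry of $K$ and integration by parts in $x$,
\[
\frac{d}{dt}\,\frac{1}{2\e}\intr \rho^\e\, K\star\rho^\e\,dx = \frac{1}{\e}\intr \pa_t\rho^\e\, K\star\rho^\e\,dx = \frac{1}{\e}\intr (\rho^\e u^\e)\cdot\nabla K\star\rho^\e\,dx.
\]
Adding this to the kinetic energy identity, the cross terms $\pm\tfrac{1}{\e}\intr(\rho^\e u^\e)\cdot\nabla K\star\rho^\e\,dx$ cancel precisely, producing the claimed free energy equality.

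For the corollary, I would integrate the free energy identity from $0$ to $t$ and discard the non-negative quantities $\tfrac12\intrr |v|^2 f^\e dxdv$ and $\tfrac{1}{2\e}\intr \rho^\e K\star\rho^\e dx$ at time $t$ (the latter is non-negative because $K>0$), giving
\[
\int_0^t \intrr |v|^2 f^\e\,dxdv\,ds \le \frac{\e}{2}\intrr |v|^2 f^\e_0\,dxdv + \frac12\intr \rho^\e_0\, K\star\rho^\e_0\,dx,
\]
which is uniformly bounded in $\e$ under the initial data assumptions of Theorem \ref{thm_ktoc}. The only real subtlety is sign-tracking to ensure the cross terms cancel rather than reinforce; this is automatic here because the friction $\gamma v$ and the force $\nabla K\star\rho^\e$ appear together in a single divergence-form operator $\nabla_v\cdot((\gamma v+\nabla K\star\rho^\e)f^\e)$.
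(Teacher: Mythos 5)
Your proof is correct and is the standard free energy computation the paper has in mind (the paper explicitly omits the proof as straightforward): testing against $|v|^2/2$ and pairing the resulting flux term $-\frac1\e\intr(\rho^\e u^\e)\cdot\nabla K\star\rho^\e\,dx$ with its exact negative coming from the local conservation law $\pa_t\rho^\e+\nabla_x\cdot(\rho^\e u^\e)=0$ applied to the interaction energy. The only detail worth spelling out is that the uniform-in-$\e$ bound on the right-hand side of the corollary requires $\intrr|v|^2 f_0^\e\,dxdv$ and $\intr\rho_0^\e K\star\rho_0^\e\,dx$ to be uniformly bounded; the first follows from the assumption $\sup_\e\intrr|v-u_0|^2 f_0^\e\,dxdv<\infty$ together with $u\in W^{1,\infty}$ and $f^\e\in\mathcal{P}$, and the second from expanding $\intr\rho_0^\e K\star\rho_0^\e$ around $\rho_0$ and using the convergence of the initial modulated interaction energy together with the Cauchy--Schwarz inequality for the positive-definite form $K$.
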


Motivated from \cite{CC21,CPW20,LT13,LT17}, in which the strong relaxation limit from Euler to continuity-type equations are studied, we rewrite the limiting equation \eqref{eq_agg} as
\begin{align}\label{eq_Euler}
\begin{aligned}
&\pa_t \rho + \nabla_x \cdot (\rho u) = 0, \cr
&\e(\pa_t u + u \cdot \nabla_x u) = -  u - \nabla K \star \rho + \e e,
\end{aligned}
\end{align}
where $e := \pa_t u + u \cdot \nabla_x u$.

\begin{proposition}\label{prop_ktoc2} Suppose that the assumptions in Theorem \ref{thm_ktoc} hold. Then we have
\begin{align*}
&\frac12\frac{d}{dt}\lt( \intrr |u-v|^2 f^\e\,dxdv + \frac{1}{\e}\intr (\rho - \rho^\e)K\star(\rho - \rho^\e)\,dx\rt)\cr
&\quad \leq -\lt(\frac{1}{2\e} - \|\nabla_x u\|_{L^\infty}\rt)\intrr |v-u|^2 f^\e\,dxdv  + \frac{C}{\e}\intr (\rho - \rho^\e)K\star(\rho - \rho^\e)\,dx +   \e\|e\|_{L^\infty}^2,
\end{align*}
where $C>0$ is independent of $\e>0$. 
\end{proposition}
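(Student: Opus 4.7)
The plan is to compute the time derivatives of the modulated kinetic energy $\frac12\iint |u-v|^2 f^\e\,dxdv$ and the modulated interaction energy $\frac{1}{2\e}\int(\rho-\rho^\e)K\star(\rho-\rho^\e)\,dx$ separately, and then add them exploiting a crucial cancellation of the terms involving $\nabla K\star(\rho^\e-\rho)$ that come out of each piece. The Euler-type reformulation \eqref{eq_Euler} of the limit equation is what makes the modulated-energy machinery directly applicable, and Theorem \ref{thm_main} / Remark \ref{rmk_ip} will absorb the only surviving nonlocal term.

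For the kinetic part, I would test \eqref{main_kin1} weakly against $\phi(x,v,t)=\tfrac12|u(x,t)-v|^2$. Using $\nabla_v\phi=v-u$ and $v\cdot\nabla_x\phi=(u-v)\cdot((v\cdot\nabla_x)u)$, together with \eqref{eq_Euler} to replace $\partial_t u=-(u\cdot\nabla)u-\frac1\e(u+\nabla K\star\rho)+e$, the two copies of $\pm\frac1\e(u-v)\cdot u$ (one coming from the velocity drag in \eqref{main_kin1}, the other from the damping in \eqref{eq_Euler}) cancel exactly, leaving
\[
\tfrac12\tfrac{d}{dt}\!\iint |u-v|^2 f^\e\,dxdv=-\iint (u-v)\otimes(u-v):\nabla u\, f^\e -\tfrac1\e\!\iint |u-v|^2 f^\e+\tfrac1\e\!\iint (u-v)\cdot\nabla K\star(\rho^\e-\rho)\, f^\e+\iint (u-v)\cdot e\, f^\e.
\]
For the interaction part, using $\partial_t(\rho^\e-\rho)=-\nabla\cdot(\rho^\e u^\e-\rho u)$, the symmetry of $K$ and the decomposition $\rho^\e u^\e-\rho u=\rho^\e(u^\e-u)+(\rho^\e-\rho)u$ (as in Remark \ref{rmk_ip}), one gets
\[
\tfrac1{2\e}\tfrac{d}{dt}\!\int (\rho^\e-\rho)K\star(\rho^\e-\rho)\,dx=\tfrac1\e\!\int \rho^\e(u^\e-u)\cdot\nabla K\star(\rho^\e-\rho)\,dx+\tfrac1\e\!\int (\rho^\e-\rho)u\cdot\nabla K\star(\rho^\e-\rho)\,dx.
\]
The identity $\rho^\e(u^\e-u)=\int(v-u)f^\e\,dv$ rewrites the first term as $-\tfrac1\e\iint(u-v)\cdot\nabla K\star(\rho^\e-\rho)f^\e\,dxdv$, which is exactly the negative of the nonlocal contribution from the kinetic piece. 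The second term is controlled by $\tfrac{C}{\e}\int(\rho^\e-\rho)K\star(\rho^\e-\rho)\,dx$ by the essential inequality \eqref{ess} from Remark \ref{rmk_ip} (this is the only place Theorem \ref{thm_main} is used).

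Adding the two identities, the cancellation leaves only manageable pieces. The quadratic form $\iint(u-v)\otimes(u-v):\nabla u\, f^\e$ is bounded by $\|\nabla u\|_{L^\infty}\iint|u-v|^2 f^\e$, and the residual $\iint(u-v)\cdot e\, f^\e$ is absorbed by Young's inequality as $\tfrac{1}{2\e}\iint|u-v|^2 f^\e+\tfrac{\e}{2}\|e\|_{L^\infty}^2\|\rho^\e\|_{L^1}$; since $f^\e\in\mathcal{P}(\mathbb{R}^d\times\mathbb{R}^d)$, $\|\rho^\e\|_{L^1}=1$. Combining these gives the stated inequality. The only delicate point is recognizing (and correctly signing) the cancellation; the signs work out precisely because of the extra factor $1/\e$ in the nonlocal force of \eqref{main_kin1} and in front of the modulated interaction energy, so no extra regularity on $\rho^\e$, $u^\e$ is ever needed beyond what Theorem \ref{thm_main} already demands from the smooth limit $(\rho,u)$.
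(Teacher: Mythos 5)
Your proposal is correct and follows essentially the same route as the paper: it tests \eqref{main_kin1} against $\tfrac12|u-v|^2$, uses the damped-Euler reformulation \eqref{eq_Euler} to expose the exact cancellation of the $\pm\frac1\e(v-u)\cdot u$ terms, rewrites the surviving nonlocal term via $\rho^\e(u^\e-u)=\int(v-u)f^\e\,dv$, and absorbs the remaining $\frac1\e\int(\rho-\rho^\e)u\cdot\nabla K\star(\rho-\rho^\e)\,dx$ piece with Theorem \ref{thm_main}. The only cosmetic difference is that you unpack Theorem \ref{thm_main} into the identity of Remark \ref{rmk_ip} plus the inequality \eqref{ess} rather than applying it as a black box; the content and the constants are the same.
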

\begin{remark} By Proposition \ref{prop_ktoc2}, we also obtain
\begin{align*}
&  \frac1{\e}\intr (\rho - \rho^\e)K\star(\rho - \rho^\e)\,dx+ \intrr |v - u(x,s) |^2f(x,v,t)\, dxdv  \cr
&\quad \leq C(1 + \e)\intrr |v - u_0(x) |^2 f^\e_0(x,v)\,dxdv   + \frac C{\e}\intr (\rho_0 - \rho^\e_0)K\star(\rho_0 - \rho^\e_0)\,dx + C\e.
\end{align*}
In particular if 
\[
\intrr |v - u_0(x) |^2 f^\e_0(x,v)\,dxdv+\frac1\e\intr (\rho_0 - \rho^\e_0)K\star(\rho_0 - \rho^\e_0)\,dx\leq C\e
\]
for some $C>0$ which is independent of $\e$, then we have
\[
\frac1\e\intr (\rho - \rho^\e)K\star(\rho - \rho^\e)\,dx+ \int_0^t \intrr |v - u(x,s) |^2f^\e(x,v,s)\, dxdv ds  \leq C\e
\]
and
\[
\intrr |v - u(x,s) |^2f^\e(x,v,t)\, dxdv \leq C\e^2
\]
for all $t \in [0,T]$, where $C>0$ is independent of $\e$. This shows that the convergence estimates for $\intr v \,f^\e\,dv$ and $f^\e$ in Theorem \ref{thm_ktoc} can be replaced by
\[
\intr v \,f^\e\,dv  \rightharpoonup \rho u  \quad \mbox{weakly in } L^\infty(0,T;\mathcal{M}(\R^d)), 
\]
and
\[
f^\e \rightharpoonup \rho\delta_{u} \quad \mbox{weakly in } L^\infty(0,T;\mathcal{M}(\R^d \times \R^d)),
\]
respectively.
\end{remark}
\begin{proof}[Proof of Proposition \ref{prop_ktoc2}]
A straightforward computation gives
\begin{align*}
\frac12\frac{d}{dt}\intrr |u-v|^2 f^\e\,dxdv &= - \intrr (v-u) \cdot (\pa_t u) f^\e\,dx + \frac12\intrr |v-u|^2 \pa_t f^\e\,dxdv \cr
&=: \sfI_1 + \sfI_2.
\end{align*}
On the other hand, it follows from \eqref{main_kin1} and \eqref{eq_Euler} that 
\begin{align*}
\sfI_2 &= \intrr u\otimes (v-u): (\nabla_x u) f^\e\,dxdv + \frac1\e \intrr (v-u) \cdot \lt(  u   + \nabla K \star \rho\rt)f^\e \,dxdv\cr
&\quad + \intrr (v-u) \cdot e f^\e\,dxdv
\end{align*}
and
\[
\sfI_2 = -\intrr v \otimes (v-u) :(\nabla_x u) f^\e\,dxdv - \frac1\e \intrr (v-u) \cdot (  v   + \nabla K \star \rho^\e)f^\e\,dxdv.
\]
Thus, by combining the above estimates, we obtain
\begin{align*}
&\frac12\frac{d}{dt}\intrr |u-v|^2 f^\e\,dxdv \cr
&\quad = \intrr (u-v) \otimes (v-u): (\nabla_x u) f^\e\,dxdv - \frac1\e \intrr |u-v|^2 f^\e\,dxdv \cr
&\qquad  + \frac1\e \intrr (v-u) \cdot \nabla K  \star (\rho - \rho^\e) f^\e\,dxdv + \intrr (v-u) \cdot e f^\e\,dxdv \cr
&\quad =: \sum_{i=1}^4 \sfJ_1,
\end{align*}
where we easily get
\[
\sfJ_1 \leq \|\nabla_x u\|_{L^\infty}\intrr  |u-v|^2 f^\e\,dxdv 
\]
and
\[
\sfJ_4 \leq \|e\|_{L^\infty}\intrr |v-u| f^\e\,dxdv \leq \e\|e\|_{L^\infty}^2 + \frac1{4\e} \intrr |v-u|^2 f^\e\,dxdv,
\]
due to $\intrr f^\e\,dxdv = 1$ for $t\geq0$.  For the estimate of $\sfJ_3$, by introducing the local moment $\rho^\e u^\e := \intr vf^\e\,dv$, we rewrite
\[
\sfJ_3 =  - \frac1\e \intr \rho^\e (u-u^\e) \cdot \nabla K  \star (\rho - \rho^\e)\,dx.
\]
We then apply Theorem \ref{thm_main} with $(\bar\rho, \bar u) = (\rho^\e, u^\e)$ to deduce
\[
\frac{1}{2\e}\frac{d}{dt}\intr (\rho - \rho^\e)K\star(\rho - \rho^\e)\,dx + \sfJ_4 \leq \frac{C}{\e}\intr (\rho - \rho^\e)K\star(\rho - \rho^\e)\,dx,
\]
where $C>0$ is independent of $\e>0$. 
 
Combining all of the above estimates, we have
\begin{align*}
&\frac12\frac{d}{dt}\lt(\intrr |u-v|^2 f^\e\,dxdv + \frac{1}{\e  }\intr (\rho - \rho^\e)K\star(\rho - \rho^\e)\,dx \rt)\cr
&\quad \leq -\lt(\frac{1}{2\e} - \|\nabla_x u\|_{L^\infty}\rt)\intrr |v-u|^2 f^\e\,dxdv  + \frac{C}{\e}\intr (\rho - \rho^\e)K\star(\rho - \rho^\e)\,dx +  \e\|e\|_{L^\infty}^2.
\end{align*}
Finally, by choosing $\e > 0$ small enough such that $\frac{1}{2\e} - \|\nabla_x u\|_{L^\infty} \geq \frac{C}{\e}$ for some $C>0$ independent of $\e$, we conclude the desired result.
\end{proof}

\begin{remark}\label{rmk_relax} The modulated kinetic energy estimate appeared in the proof of Proposition \ref{prop_ktoc2} can be also obtained by assuming $u \in L^\infty(0,T; \dot{W}^{1,\infty}(\om))\cap \dot{W}^{1,\infty}(0,T;L^\infty(\om))$, not $u\in L^\infty(\R^d \times (0,T))$. Indeed, we estimate $\sfJ_4$ as
\begin{align*}
\sfJ_4 &= \intrr (v-u) \cdot (\pa_t u) f^\e\,dxdv + \intrr (v-u) \cdot ((u-v) \cdot \nabla_x u) f^\e\,dxdv \cr
&\quad + \intrr (v-u) \cdot (v \cdot \nabla_x u) f^\e\,dxdv\cr
& \leq  \|\pa_t u\|_{L^\infty} \lt(\intrr |v-u|^2 f^\e\,dxdv\rt)^{1/2} + \|\nabla_x u\|_{L^\infty} \intrr  |v-u|^2 f^\e\,dxdv\cr
&\quad +\|\nabla_x u\|_{L^\infty} \lt(\intrr  |v-u|^2 f^\e\,dxdv\rt)^{1/2}\lt(\intrr  |v|^2 f^\e\,dxdv\rt)^{1/2}\cr
&\leq \lt(\frac{1}{2\e} + \|\nabla_x u\|_{L^\infty}\rt) \intrr |v-u|^2 f^\e\,dxdv +  \|\nabla_x u\|_{L^\infty}^2 \e\intrr  |v|^2 f^\e\,dxdv+  \e\|\pa_t u\|_{L^\infty}^2,
\end{align*}
due to $\intrr f^\e\,dxdv = 1$ for $t\geq0$. Then the kinetic energy in the right hand can be controlled by Lemma \ref{lem_free} uniformly in $\e$.
\end{remark}

\subsubsection{Proof of Theorem \ref{thm_ktoc}}

For the proof, we recall \cite[Lemma 2.1]{CC21} which gives relations between the bounded-Lipschitz distance and the modulated kinetic energy. 
\begin{lemma}\label{lem_conv}
\begin{itemize}
\item[(i)] Convergence of local moment:
\[
\d_{BL}\lt(\rho^\e u^\e, \, \rho u\rt) \leq \lt(\intrr |v - u(x) |^2 f^\e\,dxdv \rt)^{1/2} + C\d_{BL}(\rho^\e, \rho).
\]
\item[(ii)] Convergence of $f^\e$ towards the mono-kinetic distribution:
\[
d^2_{BL}(f^\e, \rho\otimes\delta_{u}) \leq C\intrr |v - u(x) |^2 f^\e\,dxdv + Cd^2_{BL}(\rho^\e, \rho).
\]
\end{itemize}
Here $C>0$ is independent of $\e$.
\end{lemma}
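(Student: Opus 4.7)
The plan is to unpack the definition of the bounded-Lipschitz distance and reduce both bounds to (a) a Cauchy--Schwarz step that produces the modulated kinetic energy and (b) a step that recognizes the remaining piece as a $d_{BL}$ pairing against $\rho^\e - \rho$, using Lipschitz regularity of $u$. Recall
\[
d_{BL}(\mu,\nu) = \sup\left\{\int \phi\,d(\mu-\nu) \,:\, \|\phi\|_{L^\infty} \le 1,\ \|\nabla\phi\|_{L^\infty} \le 1\right\},
\]
and that $f^\e$ is a probability measure, so that $\intrr f^\e\,dxdv = 1$ uniformly in $\e$.

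\textbf{Part (i).} For any admissible test function $\phi$, I would add and subtract $u(x)$ in the velocity integration to write
\[
\int_{\R^d} \phi \,(\rho^\e u^\e - \rho u)\,dx = \intrr \phi(x)(v-u(x)) f^\e\,dxdv + \int_{\R^d} \phi(x)\, u(x)\,(\rho^\e - \rho)\,dx.
\]
The first term is bounded by $\|\phi\|_{L^\infty}$ times $(\intrr |v-u|^2 f^\e\,dxdv)^{1/2}$ via Cauchy--Schwarz (the mass-1 factor of $f^\e$ absorbs the $L^2_{f^\e}$ norm of $1$). For the second term, I would use that $\phi u$ belongs to $W^{1,\infty}(\R^d)$ with
\[
\|\phi u\|_{W^{1,\infty}} \le \|u\|_{L^\infty} + \|\phi\|_{L^\infty}\|\nabla u\|_{L^\infty} + \|u\|_{L^\infty} \|\nabla \phi\|_{L^\infty} \le C,
\]
where $C$ only depends on $\|u\|_{W^{1,\infty}}$. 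Dividing by this constant puts $\phi u / C$ back into the admissible class and yields the $C\,d_{BL}(\rho^\e,\rho)$ term. Taking the supremum over $\phi$ concludes (i).

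\textbf{Part (ii).} For an admissible test function $\Phi(x,v) \in W^{1,\infty}(\R^d\times\R^d)$ with $\|\Phi\|_{L^\infty},\|\nabla_{x,v}\Phi\|_{L^\infty} \le 1$, I would insert $\Phi(x,u(x))$ as an intermediate:
\[
\intrr \Phi\, d(f^\e - \rho\otimes \delta_u) = \intrr [\Phi(x,v) - \Phi(x,u(x))]\,f^\e\,dxdv + \int_{\R^d} \Phi(x,u(x))\,(\rho^\e - \rho)\,dx.
\]
The first term is bounded by $\|\nabla_v \Phi\|_{L^\infty} \intrr |v - u(x)|\,f^\e\,dxdv$, which, again by Cauchy--Schwarz against $f^\e$, is dominated by $(\intrr |v-u|^2 f^\e\,dxdv)^{1/2}$. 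The second term requires $x \mapsto \Phi(x,u(x))$ to be in $W^{1,\infty}(\R^d)$; its norm is controlled by $1 + \|\nabla u\|_{L^\infty}$, so dividing by this constant places the function into the admissible class for $d_{BL}(\rho^\e,\rho)$. Squaring the sum and applying $(a+b)^2 \le 2a^2+2b^2$ gives the stated estimate.

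\textbf{Main obstacle.} Both parts are essentially elementary once one trusts the decomposition, but the one detail to get right is the rescaling of the test functions $\phi u$ and $\Phi(\cdot, u(\cdot))$ so that they remain in the admissible class for the $d_{BL}$ norm; this is where the constant $C$ depending on $\|u\|_{W^{1,\infty}}$ enters, and explains why only $W^{1,\infty}$ regularity of $u$ is invoked (which is available by the hypotheses of Theorem \ref{thm_ktoc}). No cancellation across $\e$ is needed because all constants are uniform in $\e$, and the bound on $\intrr f^\e\,dxdv$ is simply mass conservation.
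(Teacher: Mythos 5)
Your proof is correct. The paper itself does not prove this lemma; it cites \cite[Lemma 2.1]{CC21} and simply records the statement. Your argument fills in what the paper leaves implicit, and the strategy (insert the intermediate $u(x)$, Cauchy--Schwarz against $f^\e$ using $\intrr f^\e\,dxdv=1$ for the modulated-kinetic-energy term, then rescale the perturbed test functions $\phi u$ and $\Phi(\cdot,u(\cdot))$ by their $W^{1,\infty}$ norms to return to the admissible class for $\d_{BL}(\rho^\e,\rho)$) is exactly the standard route for relations of this type and matches the proof in the cited reference. The one minor point worth flagging in a write-up is that $\rho^\e u^\e-\rho u$ is an $\R^d$-valued measure, so in part (i) one should either work componentwise or allow vector-valued test functions in the definition of $\d_{BL}$; this does not affect the estimate or the constants.
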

The proof of Theorem \ref{thm_ktoc} easily follows from a simple combination of \eqref{lem_conv} and Proposition \ref{prop_ktoc}.
%%%%%%%%%%%%%%%%%%%%%%%%%%%%%%%%%%%%%%%%%%%%%%%
%
%
%
%
%
%
%%%%%%%%%%%%%%%%%%%%%%%%%%%%%%%%%%%%%%%%%%%%%%%

%%%%%%%%%%%%%%%%%%%%%%%%%%%%%%%%%%%%%%%%%%%%%%%
%
%
%
%
%
%
%%%%%%%%%%%%%%%%%%%%%%%%%%%%%%%%%%%%%%%%%%%%%%%

\subsection{Hydrodynamic limit: from kinetic to Euler}\label{ssec_hydro} 
In this part, we now study the asymptotic analysis for the following kinetic equation when $\e \to 0$:
\[
\pa_t f^\e + v\cdot\nabla_x f^\e -  \nabla_v \cdot \lt((\gamma v +  \nabla K \star \rho^\e )f^\e \rt)  = \frac1\e\nabla_v \cdot ( (v - u^\e)f^\e +  c_P\nabla_v f^\e), 
\]
where $c_P =0$ or $1$, 
\[
\rho^\e = \intr f^\e\,dv, \quad \mbox{and} \quad \rho^\e u^\e = \intr vf^\e\,dv. 
\]
Similarly as before, we set $\gamma = 1$ without loss of generality in the rest of this part. 
As mentioned in Introduction, for the above asymptotic analysis, we employ the relative entropy method. For this, we first present the free energy estimates whose proof can be found in \cite[Lemma 2.1, Proposition 2.1]{CCJ21}. Let us recall the free energy $\mf$ and the dissipation $\md$:
\[
\mf(f) = c_P\intrr f \log f\,dxdv + \frac12\intrr |v|^2 f\,dxdv + \frac{1}{2}\intrr K(x-y)\rho(x) \rho(y)\,dxdy 
\]
and
\[
\md(f) = \intrr \frac{1}{f} \lt|c_P \nabla_v f - f(u-v) \rt|^2 dxdv.
\]

\begin{proposition}\label{prop_energy}
Suppose that $f$ is a solution of \eqref{main_eq} with sufficient integrability. Then we have
\[
\frac{d}{dt}\mf(f^\e) + \frac1\e \md_1(f^\e)   +  \intrr |v|^2 f^\e\,dxdv =  c_P \gamma d.
\]
Furthermore, we obtain
\[
\mf(f^\e) + \int_0^t \lt(\frac1{2\e}\md_1(f^\e) +  \intr \rho^\e|u^\e|^2\,dx \rt)ds \leq \mf(f_0^\e) + C(1+ \gamma^2)\e,
\]
where $C > 0$ depends only $T$ and $f_0^\e$.
\end{proposition}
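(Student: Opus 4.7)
The plan is to test equation \eqref{main_eq} against the variational derivative of $\mf$, namely $\tfrac{1}{2}|v|^2 + c_P(\log f^\e + 1) + K\star\rho^\e$, and track each contribution carefully. These three pieces generate, respectively, the evolutions of the kinetic, entropy (when $c_P=1$), and interaction parts of $\mf$; summing the three identities produces $\tfrac{d}{dt}\mf(f^\e)$.

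The $v\cdot\nabla_x$ transport terms integrate to zero. For the force $-\nabla_v\cdot((\gamma v + \nabla K\star\rho^\e)f^\e)$, testing against $\tfrac{1}{2}|v|^2$ yields $-\gamma\intrr|v|^2 f^\e\,dxdv - \intrr v\cdot(\nabla K\star\rho^\e)f^\e\,dxdv$, while testing against $c_P(\log f^\e + 1)$ yields the source $+c_P\gamma d$ (the $\nabla K\star\rho^\e$ piece drops as it is $v$-independent and $\intrr f^\e\,dxdv = 1$). The interaction energy derivative, computed using the continuity equation $\pa_t\rho^\e + \nabla_x\cdot(\rho^\e u^\e) = 0$ obtained by $v$-integrating \eqref{main_eq}, equals $+\intrr v\cdot(\nabla K\star\rho^\e)f^\e\,dxdv$, which exactly cancels the cross term from the kinetic energy. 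So all nonlocal interaction contributions are accounted for.

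The Fokker--Planck contributions require the most care. Testing $\mathcal{N}_{FP}[f^\e]$ against $\tfrac{1}{2}|v|^2$ gives $-\beta\intrr|v-u^\e|^2 f^\e\,dxdv + \sigma d$; here one crucially uses $\intr(v-u^\e)f^\e\,dv \equiv 0$ to rewrite $-\beta\intrr v\cdot(v-u^\e)f^\e$ in the modulated form. Testing against $c_P(\log f^\e + 1)$ gives $c_P\beta d - c_P\sigma\intrr|\nabla_v f^\e|^2/f^\e\,dxdv$. Separately, expanding $\md(f^\e) = \intrr\tfrac{1}{f^\e}|c_P\nabla_v f^\e - f^\e(u^\e - v)|^2\,dxdv$ with the identity $\intrr\nabla_v f^\e\cdot(u^\e - v)\,dxdv = d$ (integration by parts) yields $\md = c_P^2\intrr|\nabla_v f^\e|^2/f^\e\,dxdv - 2c_P d + \intrr|u^\e - v|^2 f^\e\,dxdv$. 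In each of the two regimes $(c_P,\sigma,\beta) \in \{(0,0,1/\e),(1,1/\e,1/\e)\}$, one verifies that the combined Fokker--Planck contributions equal precisely $-\tfrac{1}{\e}\md(f^\e) + c_P\gamma d$, with the constants $\sigma d,\,\beta d,\,-2c_P d$ collapsing to the single source $c_P\gamma d$. This gives the identity $\tfrac{d}{dt}\mf + \tfrac{1}{\e}\md + \gamma\intrr|v|^2 f^\e\,dxdv = c_P\gamma d$.

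The integrated inequality then follows by integrating in time, using $\md \geq 0$ to replace $\tfrac{1}{\e}\md$ with $\tfrac{1}{2\e}\md$, and the Cauchy--Schwarz bound $\intr\rho^\e|u^\e|^2\,dx \leq \intrr|v|^2 f^\e\,dxdv$ (from $|\intr v f^\e\,dv|^2 \leq \rho^\e \intr|v|^2 f^\e\,dv$) to control $\gamma\intr\rho^\e|u^\e|^2\,dx$ by the kinetic-energy dissipation from below; the remaining term $c_P\gamma d\,t$ is absorbed into the right-hand side via the constant $C$ depending on $T$. The main technical obstacle is the dimensional-constant bookkeeping in the Fokker--Planck step: a sign error in any of the $\sigma d,\,\beta d,\,-2c_P d$ terms would spoil the exact collapse to the source $c_P\gamma d$, so the two regimes must be verified separately and very carefully.
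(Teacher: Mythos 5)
The paper does not give its own proof of this proposition; it cites \cite[Lemma~2.1, Proposition~2.1]{CCJ21}. So your argument has to be judged on its own terms. Your strategy, testing the kinetic equation against the variational derivative $\tfrac12|v|^2 + c_P(\log f^\e + 1) + K\star\rho^\e$, is exactly the right one, and your final differential identity
\[
\frac{d}{dt}\mf(f^\e) + \frac1\e\md(f^\e) + \gamma\intrr|v|^2 f^\e\,dxdv = c_P\gamma d
\]
is correct (the $\gamma$ on the kinetic dissipation is the right reading of the paper's statement). However, there are two issues.

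First, a bookkeeping slip in the writeup. You correctly identify that testing $\nabla_v\cdot(\gamma v f^\e)$ against $c_P(\log f^\e + 1)$ produces $c_P\gamma d$, but you then also assert that the combined Fokker--Planck contributions equal $-\frac1\e\md(f^\e) + c_P\gamma d$. In fact the Fokker--Planck contributions equal $-\frac1\e\md(f^\e)$ exactly: the constants $\sigma d$ and $c_P\beta d$ you extract are canceled by the $-2c_P d$ coming from expanding $\md$, not collapsed to $c_P\gamma d$ (with $c_P=1$, $\beta=\sigma=1/\e$: $\tfrac{d}{\e} + \tfrac{d}{\e} - \tfrac{2d}{\e} = 0$). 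The $c_P\gamma d$ comes solely from the linear damping hitting the entropy. As written you count $c_P\gamma d$ twice, yet state the final identity correctly, so this is an exposition inconsistency rather than a computational one.

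Second, a genuine gap in the integrated inequality. Integrating the identity, dropping half the dissipation, and using Jensen to get $\gamma\intr\rho^\e|u^\e|^2\,dx \le \gamma\intrr|v|^2 f^\e\,dxdv$ yields
\[
\mf(f^\e)(t) + \int_0^t \left(\frac1{2\e}\md(f^\e) + \gamma\intr\rho^\e|u^\e|^2\,dx\right)ds \le \mf(f_0^\e) + c_P\gamma d\,t.
\]
The residual $c_P\gamma d\,T$ is $O(\gamma)$ with no factor of $\e$, so it cannot be absorbed into $C(1+\gamma^2)\e$ with $C$ independent of $\e$. You simply say this term is ``absorbed into the right-hand side via the constant $C$ depending on $T$'' without accounting for the $\e$-factor, and that step does not go through. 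Either an additional cancellation or an $\e$-scaling of the initial data is needed (or, quite possibly, the stated bound carries a transcription issue from \cite{CCJ21}); but as written your argument yields only the weaker bound $\mf(f_0^\e) + C\gamma$ with $C$ depending on $T$ and $d$, not $\mf(f_0^\e) + C(1+\gamma^2)\e$.
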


Now, our main purpose is to prove the following quantitative bound estimate on the relative energies. 

\begin{proposition}\label{prop_hydro1} Let the assumptions of Theorem \ref{thm_hydro} verified.  Then we have the following inequalities for $0< \e \leq 1$ and $t \leq T$:
$$\begin{aligned}
&\frac12\intr \rho^\e |u^\e - u|^2\,dx + c_P\intr\mathcal{H}(\rho^\e|\rho)\,dx + \frac 12 \intr (\rho - \rho^\e)K\star(\rho - \rho^\e)\,dx  +  \int_0^t \intr \rho^\e| u^\e - u|^2\,dxds\cr
&\qquad \leq C\sqrt{\e}.
\end{aligned}$$
Here $C>0$ is a positive constant independent of $\e$. 
\end{proposition}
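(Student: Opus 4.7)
The plan is a relative entropy (modulated free energy) argument combining the free-energy inequality of Proposition~4.1 with the modulated interaction energy estimate of Theorem~\ref{thm_main}. The natural modulated quantity is
$$\mathcal{I}(t) := \intrr \bigl( f^\e \log(f^\e/\tilde f) - f^\e + \tilde f \bigr)\,dxdv + \frac{1}{2}\intr (\rho - \rho^\e)\, K \star (\rho - \rho^\e)\,dx$$
in the diffusive case $c_P = 1$, where $\tilde f(x,v,t) = \rho(x,t) M_{u(x,t)}(v)$; in the pressureless case $c_P = 0$ the first integral is replaced by the modulated kinetic energy $\frac{1}{2} \intrr |v-u|^2 f^\e\,dxdv$. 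Expanding the integrand and using $\rho^\e u^\e = \int v f^\e\,dv$ shows that $\mathcal{I}(t)$ equals the left-hand side of the target inequality, modulo the nonnegative dissipation $\frac{1}{2\e}\int_0^t \md(f^\e)\,ds$, already bounded by $\mathcal{O}(\e)$ via Proposition~4.1.

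I would then differentiate $\mathcal{I}$ in time using the kinetic equation \eqref{main_eq} for $f^\e$ and the Euler-type system \eqref{main_E} for $(\rho,u)$. The diagonal parts reassemble into the free-energy inequality of Proposition~4.1 and its Euler-side counterpart; the cross terms, after standard relative-entropy manipulations as in \cite{CCJ21,FK19,KMT15}, split into (i) convective contributions bounded by $\|\nabla u\|_{L^\infty}\mathcal{I}(t)$, (ii) a pressure/entropy cross term absorbed into the relative entropy piece $c_P \intr \mathcal{H}(\rho^\e|\rho)\,dx$ of $\mathcal{I}(t)$, and (iii) a genuinely nonlocal cross term of the form (up to symmetrization)
$$\intr \rho^\e(u^\e - u)\cdot \nabla K \star (\rho - \rho^\e)\,dx.$$
Term (iii) is exactly what Theorem~\ref{thm_main} absorbs: applied with $(\bar\rho,\bar u) = (\rho^\e, u^\e)$ and the smooth Euler pair in the role of $(\rho,u)$ --- noting that $(\rho^\e, u^\e)$ satisfies the continuity equation distributionally as the zeroth moment of \eqref{main_eq}, which is all the theorem requires on the less-regular pair --- it yields precisely this cross term with the opposite sign, together with a remainder $C\intr (\rho-\rho^\e) K\star(\rho-\rho^\e)\,dx \leq C\mathcal{I}(t)$, producing an exact cancellation.

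The residual errors come from replacing kinetic velocities $v$ by the fluid velocity $u^\e$ in momentum fluxes of $f^\e$; by Cauchy--Schwarz they are bounded by $\sqrt{\e} \cdot \sqrt{\frac{1}{\e}\md(f^\e)}$ and time-integrate to $\mathcal{O}(\sqrt{\e})$ by Proposition~4.1. Altogether I obtain a Gronwall inequality $\mathcal{I}(t) \leq \mathcal{I}(0) + C\int_0^t \mathcal{I}(s)\,ds + C\sqrt{\e}$, which closes because the well-preparedness hypotheses (i)--(iii) (and the additional entropy hypotheses when $c_P = 1$) force $\mathcal{I}(0) = \mathcal{O}(\sqrt{\e})$. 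The main obstacle is the careful bookkeeping of cross terms generated by the Fokker--Planck operator $\mathcal{N}_{FP}[f^\e]$: they must simultaneously cancel with dissipation-type quantities in the relative-entropy expansion so as to recover the $\int_0^t \int \rho^\e |u^\e - u|^2\,dx\,ds$ term on the left-hand side, and give no more than $\sqrt{\e}$ loss when paired against $\md(f^\e)$; a secondary point requiring care is verifying that the first-moment pair $(\rho^\e, u^\e)$ defined from a weak kinetic solution genuinely satisfies the distributional continuity equation required to invoke Theorem~\ref{thm_main}.
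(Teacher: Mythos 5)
Your proposal follows essentially the same route as the paper: combine a relative-entropy inequality in the conserved variables $(\rho,m)$ with Theorem~\ref{thm_main} applied with $(\bar\rho,\bar u)=(\rho^\e,u^\e)$ so that the nonlocal cross term $\int\rho^\e(u^\e-u)\cdot\nabla K\star(\rho-\rho^\e)\,dx$ cancels against the time derivative of the modulated interaction energy, then close by Gr\"onwall using the well-preparedness hypotheses. The only difference is presentational: where you sketch the relative-entropy bookkeeping (Fokker--Planck cross terms controlled by the dissipation $\md$, momentum-flux replacement errors bounded by $\sqrt{\e}\cdot\sqrt{\e^{-1}\md}$) from scratch, the paper delegates it wholesale to \cite[Proposition 3.1]{CCJ21}, restated as Proposition~\ref{prop_re}, and simply adds the time-integrated form of \eqref{res_thm} before applying Gr\"onwall; the two observations you flag as needing care --- the continuity equation for the kinetic moments and the $\sqrt{\e}$ budget from $\md$ --- are precisely the points handled in that cited proposition.
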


Since the strategy of the proof highly depends on the recent work \cite{CCJ21} combined with our main theorem, Theorem \ref{thm_main}, we briefly provide the details of that. As mentioned before, we use the relative entropy argument based on the weak-strong uniqueness principle to have the quantitative error estimates between the kinetic equation and the limiting system. 

We first begin with rewriting our desired limiting system \eqref{main_E} as a conservative form:
\[
\pa_t U + \nabla \cdot A(U) = F(U),
\]
where 
\[
 U := \begin{pmatrix}
\rho \\
m 
\end{pmatrix} 
\quad \mbox{with} \quad m = \rho u, \quad
A(U) := \begin{pmatrix}
m  & 0 \\
\displaystyle \frac{m \otimes m}{\rho} & c_P\rho \mathbb{I}_{d \times d}
\end{pmatrix},
\]
and
\[
F(U) := \begin{pmatrix}
0 \\
\displaystyle    -  \rho u -   \rho \nabla K \star \rho  
\end{pmatrix}.
\]
Here $\mathbb{I}_{d \times d}$ denotes the $d \times d$ identity matrix. The free energy of the above system is given by
\[
E(U) := \frac{|m|^2}{2\rho} + c_P \rho \log \rho.
\]
We now define the relative entropy functional $\me$ between two states of the system $U$ and $\bar U$ as follows.
\[
\me(\bar U|U) := E(\bar U) - E(U) - DE(U)(\bar U-U) \quad \mbox{with} \quad \bar U := \begin{pmatrix}
        \bar\rho \\
        \bar m \\
    \end{pmatrix}, \quad \bar m = \bar\rho \bar u,
\]
where $D E(U)$ denotes the derivation of $E$ with respect to $\rho, m$, i.e.,
$$\begin{aligned}
-DE(U)(\bar U - U) &= -\begin{pmatrix}
\displaystyle        -\frac{|m|^2}{2\rho^2} & c_P(\log \rho + 1)\\[3mm]
\displaystyle        \frac{m}{\rho} & 0
    \end{pmatrix}
    \begin{pmatrix}
    \bar\rho - \rho \\
    \bar m - m
    \end{pmatrix}\\
    &= \frac{\bar\rho |u|^2}{2} - \frac{\rho|u|^2}{2} + c_P(\rho - \bar\rho)(\log \rho + 1) + \rho |u|^2 - \bar\rho u \cdot \bar u.
\end{aligned}$$
This yields 
$$\begin{aligned}
\me(\bar U|U) &= \frac{\bar\rho|\bar u|^2}{2} - \frac{\rho|u|^2}{2} + c_P\bar\rho \log \bar\rho - c_P\rho \log \rho + \frac{\bar\rho |u|^2}{2} - \frac{\rho|u|^2}{2} + c_P(\rho - \bar\rho)(\log \rho + 1) + \rho |u|^2 - \bar\rho u \cdot \bar u\cr
&= \frac{\bar\rho}{2}|\bar u - u|^2 + c_P\mh(\bar\rho| \rho),
\end{aligned}$$
where $\mh(\bar\rho | \rho)$ is the relative entropy between densities given by 
\[
\mathcal{H}(\bar\rho|\rho):=  \int_{\rho}^{\bar\rho} \frac{\bar\rho - z}{z}\,dz =  \bar\rho  \log \bar\rho - \rho  \log \rho - (1+\log \rho) (\bar\rho-\rho).
\]

Then, by \cite[Proposition 3.1]{CCJ21}, we have the quantitative bounds on the relative entropy functional $\me$. 
\begin{proposition}\label{prop_re}Let the assumptions of Theorem \ref{thm_hydro} verified.  Then we have the following inequalities for $0< \e \leq 1$ and $t \leq T$:
$$\begin{aligned}
&\intr \me(U^\e|U)\,dx +  \int_0^t \intr \rho^\e(x)| u^\e(x) - u(x)|^2\,dxds\cr
&\qquad \leq C\sqrt{\e} + C\int_0^t \intr \me(U^\e|U)\,dxds  +\int_0^t \intr \rho^\e(x) ( u^\e(x) - u(x)) \cdot (\nabla K \star (\rho - \rho^\e))(x)\,dxds,
\end{aligned}$$
where $C>0$ is independent of $\e$.
\end{proposition}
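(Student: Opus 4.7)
The plan is to perform a kinetic-level relative entropy (weak--strong uniqueness) computation, modulated against the smooth strong solution $(\rho,u)$ of the Euler-type system \eqref{main_E}, designed so that $\intr\me(U^\e|U)\,dx$ appears on the left-hand side and the clean cross term $\intr\rho^\e(u^\e-u)\cdot\nabla K\star(\rho-\rho^\e)\,dx$ is kept \emph{isolated} on the right-hand side, to be later handled by Theorem \ref{thm_main} when proving Proposition \ref{prop_hydro1}. Two external inputs provide the $O(\sqrt\e)$ budget: the well-preparedness of the initial data (assumptions (i)--(iii) of Theorem \ref{thm_hydro}), which gives $\intr\me(U_0^\e|U_0)\,dx=O(\sqrt\e)$; and the free-energy dissipation of Proposition \ref{prop_energy}, which yields $\int_0^t\md(f^\e)\,ds=O(\e)$ and hence, via Cauchy--Schwarz, controls every ``kinetic gap'' quantity (variance of $f^\e$ around $u^\e$, non-Maxwellian deficit) by $O(\sqrt\e)$ after time integration.

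The computational core is to differentiate $\intr\me(U^\e|U)\,dx$ using the first two $v$-moments of \eqref{main_eq}. The continuity identity $\pa_t\rho^\e+\nabla\cdot(\rho^\e u^\e)=0$ is exact, while the momentum moment takes the form
\[
\pa_t(\rho^\e u^\e)+\nabla\cdot(\rho^\e u^\e\otimes u^\e)+\nabla\cdot P^\e=-\rho^\e u^\e-\rho^\e\nabla K\star\rho^\e,
\]
with $P^\e:=\intr(v-u^\e)\otimes(v-u^\e)f^\e\,dv$ the kinetic stress tensor; the local alignment and diffusion operators contribute nothing to the first $v$-moment because $u^\e$ is the local mean and $\int v\,\nabla_v\cdot(\sigma\nabla_v f^\e)\,dv=0$. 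Subtracting the Euler momentum balance for $(\rho,u)$, expanding $\me(U^\e|U)=\rho^\e|u^\e-u|^2/2+c_P\mh(\rho^\e|\rho)$, and, in the case $c_P=1$, performing the standard renormalization of the continuity equation to compute $\frac{d}{dt}\intr\mh(\rho^\e|\rho)\,dx$, one obtains a schematic identity
\[
\begin{aligned}
\frac{d}{dt}\intr\me(U^\e|U)\,dx
&=-\intr\rho^\e|u^\e-u|^2\,dx+\intr\rho^\e(u^\e-u)\cdot\nabla K\star(\rho-\rho^\e)\,dx\\
&\quad+\mathsf{R}_{\nabla u}+\mathsf{R}_{P^\e}+\mathsf{R}_{c_P},
\end{aligned}
\]
in which $\mathsf{R}_{\nabla u}$ gathers terms quadratic in $(u^\e-u)$ weighted by $\nabla u\in L^\infty$ and is bounded by $C\intr\me\,dx$, while $\mathsf{R}_{c_P}$ collects the pressure-type cross terms arising when $c_P=1$ and closes as $C\intr\me\,dx$ plus an $O(\sqrt\e)$ contribution from the well-prepared-entropy assumption. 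The first two displayed terms on the right are precisely those appearing in the statement.

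The main obstacle is controlling the stress-tensor remainder $\mathsf{R}_{P^\e}$, which after integration by parts reduces to $\intr\nabla u:P^\e\,dx$ plus pieces of the form $\intr\nabla(u^\e-u):(P^\e-c_P\rho^\e\mathbb{I}_{d\times d})\,dx$. The essential tool is the identity $\mathrm{tr}(P^\e-c_P\rho^\e\mathbb{I}_{d\times d})\leq C\,\md(f^\e)$ (a direct inequality when $c_P=0$, and the modulated-Maxwellian identity of \cite[Proposition~3.1]{CCJ21} or \cite{KMT15} when $c_P=1$), which expresses the non-equilibrium part of $P^\e$ through the dissipation $\md$. Cauchy--Schwarz combined with Young's inequality then bounds $\mathsf{R}_{P^\e}$ by $\eta\intr\rho^\e|u^\e-u|^2\,dx+C_\eta\,\md(f^\e)$; choosing $\eta$ small enough not to disturb the coercive $-\intr\rho^\e|u^\e-u|^2\,dx$ on the left-hand side and integrating from $0$ to $t$, the dissipation bound $\int_0^t\md\,ds=O(\e)$ together with the $O(\sqrt\e)$ initial data yields precisely the stated inequality, with no Gronwall-type argument required at this pre-closure stage.
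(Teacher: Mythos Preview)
The paper does not prove this proposition independently; it simply invokes \cite[Proposition~3.1]{CCJ21}. Your sketch is essentially an outline of that relative-entropy computation, so the strategy coincides with what the paper relies on.

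There is, however, a slip worth correcting. You write that $\mathsf{R}_{P^\e}$ contains ``pieces of the form $\intr\nabla(u^\e-u):(P^\e-c_P\rho^\e\mathbb{I}_{d\times d})\,dx$''. In the actual computation the kinetic stress $\nabla\cdot P^\e$ is paired only with the $m$-component of $DE(U)$, namely $u$; after integration by parts one obtains $-\intr\nabla u:(P^\e-c_P\rho^\e\mathbb{I}_{d\times d})\,dx$, and no derivative of $u^\e$ ever appears---nor could it, since only weak regularity of $f^\e$ is assumed. Relatedly, when $c_P=1$ the pointwise claim $\mathrm{tr}(P^\e-c_P\rho^\e\mathbb{I}_{d\times d})\le C\md(f^\e)$ is not what is used: the correct mechanism is the identity $P^\e_{ij}-\rho^\e\delta_{ij}=\intr(v_i-u^\e_i)\bigl((v_j-u^\e_j)f^\e+\pa_{v_j}f^\e\bigr)\,dv$ followed by Cauchy--Schwarz, giving $\intr|P^\e-\rho^\e\mathbb{I}_{d\times d}|\,dx\le C\bigl(\intrr|v-u^\e|^2f^\e\,dxdv\bigr)^{1/2}\md(f^\e)^{1/2}$. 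Integrating in time and invoking $\int_0^t\md(f^\e)\,ds\le C\e$ from Proposition~\ref{prop_energy} then produces $C\sqrt\e$ directly, with no need to peel off an $\eta\intr\rho^\e|u^\e-u|^2\,dx$ term. These are repairs of detail; your overall plan is correct.
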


\subsubsection{Proof of Proposition \ref{prop_hydro1}} We simply combine the bound estimate on the relative entropy $\me$ in Proposition \ref{prop_re} and the modulated interaction energy estimate in Theorem \ref{thm_main} to obtain
$$\begin{aligned}
&\intr \me(U^\e|U)\,dx + \frac 12 \intr (\rho - \rho^\e)K\star(\rho - \rho^\e)\,dx  + \int_0^t \intr \rho^\e| u^\e - u|^2\,dxds\cr
&\qquad \leq C\sqrt{\e}  + C \int_0^t \intr \me(U^\e|U)\,dxds + C\int_0^t \intr (\rho - \rho^\e)K\star(\rho - \rho^\e)\,dxds,
\end{aligned}$$
where $C>0$ is independent of $\e>0$. We finally apply the Gro\"onwall's lemma to the above to conclude the desired result.

\subsubsection{Proof of Theorem \ref{thm_hydro}} The proof of Theorem \ref{thm_hydro} readily follows from a combination of Proposition \ref{prop_hydro1} and the following lemma whose proofs can found in \cite[Lemma 2.2, Corollary 2.1 \& Corollary 2.2]{CCJ21}. 
\begin{lemma}\label{lem_conv} There exists a positive constant $C$ depending only on $\|u\|_{W^{1,\infty}}$ such that the following inequalities hold.
\begin{itemize}
\item[(i)] Error bound between densities:
\[
\|\rho^\e - \rho\|_{L^1} \leq \lt(2(\|\rho^\e\|_{L^1} + \|\rho\|_{L^1}) \rt)^{1/2}\lt(\intr \mh(\rho^\e| \rho)\,dx\rt)^{1/2}.
\]
\item[(ii)] Error bound between moments:
\[
\|\rho^\e u^\e - \rho u\|_{L^1} \leq \|\rho^\e\|_{L^1}^{1/2}\lt(\intr \rho^\e |u^\e - u|^2\,dx \rt)^{1/2} + \|u\|_{L^\infty}\|\rho^\e - \rho\|_{L^1}
\]
and
\[
\d_{BL}(\rho^\e u^\e, \rho u) \leq \|\rho^\e\|_{L^1}^{1/2}\lt(\intr \rho^\e |u^\e - u|^2\,dx \rt)^{1/2} + C\d_{BL}(\rho^\e, \rho).
\]
\item[(iii)] Error bound between convections:
$$\begin{aligned}
\|\rho^\e u^\e \otimes u^\e - \rho u \otimes u\|_{L^1} &\leq \intr \rho^\e |u^\e - u|^2\,dx + 2\|u\|_{L^\infty}\|\rho^\e\|_{L^1}^{1/2}\lt(\intr \rho^\e |u^\e - u|^2\,dx \rt)^{1/2}\cr
&\quad  + 3\|u\|_{L^\infty}^2\|\rho^\e - \rho\|_{L^1}
\end{aligned}$$
and
\[
\d_{BL}(\rho^\e u^\e \otimes u^\e, \rho u \otimes u) \leq \intr \rho^\e |u^\e - u|^2\,dx + C\|\rho^\e\|_{L^1}^{1/2}\lt(\intr \rho^\e |u^\e - u|^2\,dx \rt)^{1/2} + C\d_{BL}(\rho^\e, \rho).
\]

\item[(iv)] Error bound between particle distribution and Maxwellian ansatz:
\[
\lt\|f^\e - \frac{\rho}{(2\pi)^{d/2}} e^{-\frac{|u-v|^2}{2}}\rt\|_{L^1} \leq C  \lt(\intrr  \mathcal{H}\lt(f_0^\e \Big|\frac{\rho}{(2\pi)^{d/2}} e^{-\frac{|u-v|^2}{2}}\rt) dxdv\rt)^{1/2} + C\e^{1/8}.
\]
\item[(v)] Error bound between particle distribution and mono-kinetic ansatz:
$$\begin{aligned}
\d_{BL}(f^\e, \rho \otimes \delta_{u}) &\leq \|\rho^\e\|_{L^1}^{1/2}\lt( \lt(\intrr |v - u^\e|^2 f^\e\,dxdv\rt)^{1/2} + \lt(\intr  \rho^\e |u^\e - u|^2 \,dx\rt)^{1/2}\rt) \cr
&\quad +C  \d_{BL}(\rho^\e, \rho).
\end{aligned}$$
\end{itemize}
\end{lemma}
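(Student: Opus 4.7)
The plan is to treat each item as a consequence of an algebraic decomposition combined with either a Csisz\'ar--Kullback-type inequality or weighted Cauchy--Schwarz. For (i), I would rely on the pointwise estimate $(a-b)^2/(a+b)\leq 2\mh(a|b)$ for $a,b>0$, which follows from a short convexity argument applied to $t\mapsto t\log t$; the Cauchy--Schwarz splitting $|\rho^\e-\rho|=|\rho^\e-\rho|(\rho^\e+\rho)^{-1/2}\cdot(\rho^\e+\rho)^{1/2}$ then integrates to the stated $L^1$ bound with the indicated constant.

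For (ii) and (iii) the key ingredients are the algebraic decompositions
\begin{align*}
\rho^\e u^\e-\rho u &= \rho^\e(u^\e-u) + (\rho^\e-\rho)u,\\
\rho^\e u^\e\otimes u^\e-\rho u\otimes u &= \rho^\e(u^\e-u)\otimes(u^\e-u) + \rho^\e(u^\e-u)\otimes u \\
&\quad + \rho^\e u\otimes(u^\e-u) + (\rho^\e-\rho)\,u\otimes u.
\end{align*}
Each $\rho^\e(u^\e-u)$ factor is controlled in $L^1$ by weighted Cauchy--Schwarz, $\int \rho^\e|u^\e-u|\,dx\leq \|\rho^\e\|_{L^1}^{1/2}\bigl(\int\rho^\e|u^\e-u|^2\,dx\bigr)^{1/2}$, while the $(\rho^\e-\rho)$-contributions are absorbed into $\|u\|_{L^\infty}^k\|\rho^\e-\rho\|_{L^1}$. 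For the BL versions I would test against $\phi$ with $\|\phi\|_{BL}\leq 1$ and use that $u\phi$ and $(u\otimes u)\phi$ lie in BL with norm $\lesssim \|u\|_{W^{1,\infty}}\|\phi\|_{BL}$, so these pieces are bounded by $C\d_{BL}(\rho^\e,\rho)$.

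Part (iv) is the main obstacle. The plan is to split
\[
f^\e - \tfrac{\rho}{(2\pi)^{d/2}}e^{-|u-v|^2/2} = \bigl(f^\e - \rho^\e M_{u^\e}\bigr) + \bigl(\rho^\e M_{u^\e} - \rho M_u\bigr),
\]
apply a kinetic Csisz\'ar--Kullback inequality $\|f^\e-\rho^\e M_{u^\e}\|_{L^1}\leq C(\iint \mh(f^\e|\rho^\e M_{u^\e})\,dxdv)^{1/2}$ to the first piece, and dominate the resulting relative entropy by the initial one plus the time integral of $\e^{-1}\md(f^\e)$, which is controlled uniformly in $\e$ via Proposition \ref{prop_energy}. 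The uniform-in-time $L^\infty_t L^1$ rate $\e^{1/8}$ then arises by interpolating the resulting $L^2_t$ smallness against propagated higher $v$-moments of $f^\e$; the second piece reduces via explicit Gaussian integrations in $v$ to (i), (ii) and the $W^{1,\infty}$ regularity of $u$. Finally (v) runs parallel to (ii): decompose $f^\e-\rho\otimes\delta_u=(f^\e-\rho^\e\otimes\delta_{u^\e})+(\rho^\e\otimes\delta_{u^\e}-\rho\otimes\delta_u)$, test against $\phi\in BL$, bound the first contribution by $\iint |\phi(x,v)-\phi(x,u^\e)|f^\e\leq \|\phi\|_{BL}\|\rho^\e\|_{L^1}^{1/2}(\iint|v-u^\e|^2 f^\e\,dxdv)^{1/2}$, and split the second as in (ii) to pick up the $\d_{BL}(\rho^\e,\rho)$ term.

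The chief technical difficulty is concentrated in (iv), where one must convert the weighted Fisher-type dissipation into a strong $L^1$ closeness to the Maxwellian with an explicit algebraic rate; parts (i)--(iii) and (v) are essentially bookkeeping around the basic Csisz\'ar--Kullback inequality and the $W^{1,\infty}$ regularity of the limiting velocity field.
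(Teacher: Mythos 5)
The paper does not actually prove this lemma: it is reproduced from an earlier work and the surrounding text says the result is ``found in \cite[Lemma 2.2, Corollary 2.1 \& Corollary 2.2]{CCJ21}.'' So there is no in-paper argument to compare against line by line.

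Your treatments of (i), (ii), (iii), and (v) are correct and standard, and presumably coincide in spirit with the cited source: the pointwise Csisz\'ar--Kullback bound $(a-b)^2/(a+b)\leq 2\mathcal{H}(a|b)$ plus Cauchy--Schwarz gives (i); the algebraic decompositions in (ii)--(iii) together with weighted Cauchy--Schwarz, $\int\rho^\e|u^\e-u|\leq\|\rho^\e\|_{L^1}^{1/2}\bigl(\int\rho^\e|u^\e-u|^2\bigr)^{1/2}$, and the fact that $u\phi$ and $(u\otimes u)\phi$ have bounded-Lipschitz norms $\lesssim\|u\|_{W^{1,\infty}}\|\phi\|_{BL}$ give both the $L^1$ and $\d_{BL}$ bounds; (v) works exactly as you describe. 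Your decomposition for (iii) even produces the sharper constant $1$ in place of $3$ in front of $\|u\|_{L^\infty}^2\|\rho^\e-\rho\|_{L^1}$, which is harmless.

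There is, however, a genuine gap in (iv). Two steps do not hold as stated. First, you claim $\iint \mathcal{H}(f^\e|\rho^\e M_{u^\e})\,dx dv$ is dominated by its initial value plus $\int_0^t\e^{-1}\mathcal{D}(f^\e)\,ds$. But the free-energy inequality of Proposition \ref{prop_energy} controls $\mathcal{F}(f^\e)$, and the kinetic relative entropy equals $\mathcal{F}(f^\e)-\mathcal{F}(\rho^\e M_{u^\e})$ (the interaction terms cancel since both have density $\rho^\e$); the subtracted quantity $\mathcal{F}(\rho^\e M_{u^\e})$ is time-dependent and is neither monotone nor absorbable into the dissipation, so the asserted Gronwall-type bound on the kinetic relative entropy is unjustified. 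Second, the passage to the $L^\infty_t L^1_{x,v}$ rate $\e^{1/8}$ is not actually carried out. The free-energy inequality yields $\int_0^T\mathcal{D}(f^\e)\,ds\lesssim\e$, an $L^1_t$ bound on the dissipation; via the Gaussian log-Sobolev inequality and Csisz\'ar--Kullback this gives $\|f^\e-\rho^\e M_{u^\e}\|_{L^2_t L^1_{x,v}}\lesssim\sqrt\e$, not the ``$L^2_t$ smallness'' of the dissipation you invoke, and upgrading from $L^2_t$ to $L^\infty_t$ requires some temporal regularity of $\|f^\e-\rho^\e M_{u^\e}\|_{L^1}$ which you do not supply; the role of ``propagated higher $v$-moments'' in that step is unexplained and is not the natural mechanism. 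As a result the specific exponent $\e^{1/8}$ is asserted rather than derived.
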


%%%%%%%%%%%%%%%%%%%%%%%%%%%%%%%%%%%%%%%%%%%%%%%
%
%
%
%
%
%
%%%%%%%%%%%%%%%%%%%%%%%%%%%%%%%%%%%%%%%%%%%%%%%

\section*{Acknowledgments}
We thank Jos\'e A. Carrillo for helpful conversations about the singular integrals with a polynomial operator. Y.-P. Choi was supported by National Research Foundation of Korea(NRF) grant funded by the Korea government(MSIP) (No. 2017R1C1B2012918) and Yonsei University Research Fund of 2020-22-0505. The work of J. Jung was supported by NRF grant (No. 2019R1A6A1A10073437).

%%%%%%%%%%%%%%%%%%%%%%%%%%%%%%%%%%%%%%%%%%%%%%%
%
%
%
%
%
%
%%%%%%%%%%%%%%%%%%%%%%%%%%%%%%%%%%%%%%%%%%%%%%%

\appendix

\section{Proof of Lemma \ref{lem_leib}}\label{app:lem_leib}

Since the cases $\ell=0,1$ are proved above, we only consider the induction step. Assuming the induction hypothesis, we use \eqref{rel_gam} to get
\begin{align*}
&\Delta_\gamma^{m+1}((u,0)\cdot\nabla_{(x,\xi)}\K) \\
&= \Delta_\gamma\lt( \sum_{\ell_1+\ell_2+p = m}\frac{m!}{\ell_1!\ell_2!p!} 2^p\sum_{j_1,\dots,j_p=1}^d (\Delta_x^{\ell_1} \pa_{j_1} \cdots \pa_{j_p}u,0) \cdot \nabla_{(x,\xi)} (\pa_{j_1}\cdots\pa_{j_p} \Delta_\gamma^{\ell_2}\K)\rt)\\
&= \sum_{\ell_1+\ell_2+p = m}\frac{m!}{\ell_1!\ell_2!p!} 2^p\sum_{j_1,\dots,j_p=1}^d (\Delta_x^{\ell_1+1} \pa_{j_1} \cdots \pa_{j_p}u,0) \cdot \nabla_{(x,\xi)} (\pa_{j_1}\cdots\pa_{j_p} \Delta_\gamma^{\ell_2}\K)\\
&\quad+ \sum_{\ell_1+\ell_2+p = m}\frac{m!}{\ell_1!\ell_2!p!} 2^{p+1}\sum_{j_1,\dots,j_{p+1}=1}^d (\Delta_x^{\ell_1} \pa_{j_1} \cdots \pa_{j_{p+1}}u,0) \cdot \nabla_{(x,\xi)} (\pa_{j_1}\cdots\pa_{j_{p+1}} \Delta_\gamma^{\ell_2}\K)\\
&\quad + \sum_{\ell_1+\ell_2+p = m}\frac{m!}{\ell_1!\ell_2!p!} 2^p\sum_{j_1,\dots,j_p=1}^d (\Delta_x^{\ell_1} \pa_{j_1} \cdots \pa_{j_p}u,0) \cdot \nabla_{(x,\xi)} (\pa_{j_1}\cdots\pa_{j_p} \Delta_\gamma^{\ell_2+1}\K)\\
&= \sum_{\substack{\ell_1+\ell_2+p = m \\ \ell_2, p\ge 1}}\frac{m!}{\ell_1!\ell_2!p!} 2^p\sum_{j_1,\dots,j_p=1}^d (\Delta_x^{\ell_1+1} \pa_{j_1} \cdots \pa_{j_p}u,0) \cdot \nabla_{(x,\xi)} (\pa_{j_1}\cdots\pa_{j_p} \Delta_\gamma^{\ell_2}\K)\\
&\quad+ \sum_{\substack{\ell_1+\ell_2+p = m\\ \ell_1, \ell_2 \ge 1}}\frac{m!}{\ell_1!\ell_2!p!} 2^{p+1}\sum_{j_1,\dots,j_{p+1}=1}^d (\Delta_x^{\ell_1} \pa_{j_1} \cdots \pa_{j_{p+1}}u,0) \cdot \nabla_{(x,\xi)} (\pa_{j_1}\cdots\pa_{j_{p+1}} \Delta_\gamma^{\ell_2}\K)\\
&\quad + \sum_{\substack{\ell_1+\ell_2+p = m\\ \ell_1, p \ge 1}}\frac{m!}{\ell_1!\ell_2!p!} 2^p\sum_{j_1,\dots,j_p=1}^d (\Delta_x^{\ell_1} \pa_{j_1} \cdots \pa_{j_p}u,0) \cdot \nabla_{(x,\xi)} (\pa_{j_1}\cdots\pa_{j_p} \Delta_\gamma^{\ell_2+1}\K)\\
&\quad + \sum_{\substack{\ell_1+\ell_2+p = m \\ \ell_2=0 \vee p=0}}\frac{m!}{\ell_1!\ell_2!p!} 2^p\sum_{j_1,\dots,j_p=1}^d (\Delta_x^{\ell_1+1} \pa_{j_1} \cdots \pa_{j_p}u,0) \cdot \nabla_{(x,\xi)} (\pa_{j_1}\cdots\pa_{j_p} \Delta_\gamma^{\ell_2}\K)\\
&\quad+ \sum_{\substack{\ell_1+\ell_2+p = m\\ \ell_1=0 \vee \ell_2 =0}}\frac{m!}{\ell_1!\ell_2!p!} 2^{p+1}\sum_{j_1,\dots,j_{p+1}=1}^d (\Delta_x^{\ell_1} \pa_{j_1} \cdots \pa_{j_{p+1}}u,0) \cdot \nabla_{(x,\xi)} (\pa_{j_1}\cdots\pa_{j_{p+1}} \Delta_\gamma^{\ell_2}\K)\\
&\quad + \sum_{\substack{\ell_1+\ell_2+p = m\\ \ell_1=0 \vee  p =0}}\frac{m!}{\ell_1!\ell_2!p!} 2^p\sum_{j_1,\dots,j_p=1}^d (\Delta_x^{\ell_1} \pa_{j_1} \cdots \pa_{j_p}u,0) \cdot \nabla_{(x,\xi)} (\pa_{j_1}\cdots\pa_{j_p} \Delta_\gamma^{\ell_2+1}\K)\\
&= \sum_{\substack{\ell_1+\ell_2+p = m+1\\ \ell_1, \ell_2, p\ge 1}}\frac{(m+1)!}{\ell_1!\ell_2!p!} 2^p\sum_{j_1,\dots,j_p=1}^d (\Delta_x^{\ell_1} \pa_{j_1} \cdots \pa_{j_p}u,0) \cdot \nabla_{(x,\xi)} (\pa_{j_1}\cdots\pa_{j_p} \Delta_\gamma^{\ell_2}\K)\\
&\quad + \sum_{\substack{\ell_1+\ell_2+p = m+1 \\ (\ell_1\ge 1)\wedge( \ell_2=0 \vee p=0)}}\frac{m!}{(\ell_1-1)!\ell_2!p!} 2^p\sum_{j_1,\dots,j_p=1}^d (\Delta_x^{\ell_1} \pa_{j_1} \cdots \pa_{j_p}u,0) \cdot \nabla_{(x,\xi)} (\pa_{j_1}\cdots\pa_{j_p} \Delta_\gamma^{\ell_2}\K)\\
&\quad+ \sum_{\substack{\ell_1+\ell_2+p = m+1\\ (p\ge 1)\wedge(\ell_1=0 \vee \ell_2 =0)}}\frac{m!}{\ell_1!\ell_2!(p-1)!} 2^{p}\sum_{j_1,\dots,j_p=1}^d (\Delta_x^{\ell_1} \pa_{j_1} \cdots \pa_{j_p}u,0) \cdot \nabla_{(x,\xi)} (\pa_{j_1}\cdots\pa_{j_p} \Delta_\gamma^{\ell_2}\K)\\
&\quad + \sum_{\substack{\ell_1+\ell_2+p = m+1\\ (\ell_2\ge 1)\wedge( \ell_1=0 \vee  p =0)}}\frac{m!}{\ell_1!(\ell_2-1)!p!} 2^p\sum_{j_1,\dots,j_p=1}^d (\Delta_x^{\ell_1} \pa_{j_1} \cdots \pa_{j_p}u,0) \cdot \nabla_{(x,\xi)} (\pa_{j_1}\cdots\pa_{j_p} \Delta_\gamma^{\ell_2}\K)\\
&=: \sum_{i=1}^4 \sfI_i.
\end{align*}
We next estimate the terms $\sfI_i,i=2,3,4$ as
\begin{align*}
\sfI_2&= \sum_{\substack{\ell_1+p = m+1 \\ \ell_1\ge 1}}\frac{m!}{(\ell_1-1)!p!} 2^p\sum_{j_1,\dots,j_p=1}^d (\Delta_x^{\ell_1} \pa_{j_1} \cdots \pa_{j_p}u,0) \cdot \nabla_{(x,\xi)} (\pa_{j_1}\cdots\pa_{j_p} \K)\\
&\quad + \sum_{\substack{\ell_1+\ell_2 = m+1 \\ \ell_1\ge 1}}\frac{m!}{(\ell_1-1)!\ell_2!}  (\Delta_x^{\ell_1} u,0) \cdot \nabla_{(x,\xi)} \Delta_\gamma^{\ell_2}\K) - (\Delta_x^{(m+1)} u,0)\cdot\nabla_{(x,\xi)} \K\\
&=  \sum_{\substack{\ell_1+p = m+1 \\ \ell_1\ge 1}}\frac{m!}{(\ell_1-1)!p!} 2^p\sum_{j_1,\dots,j_p=1}^d (\Delta_x^{\ell_1} \pa_{j_1} \cdots \pa_{j_p}u,0) \cdot \nabla_{(x,\xi)} (\pa_{j_1}\cdots\pa_{j_p} \K)\\
&\quad + \sum_{\substack{\ell_1+\ell_2 = m+1 \\ \ell_1,\ell_2\ge 1}}\frac{m!}{(\ell_1-1)!\ell_2!}  (\Delta_x^{\ell_1} u,0) \cdot \nabla_{(x,\xi)} \Delta_\gamma^{\ell_2}\K),
\end{align*}
\begin{align*}
\sfI_3&= \sum_{\substack{\ell_2+p = m+1\\ p\ge 1}}\frac{m!}{\ell_2!(p-1)!} 2^{p}\sum_{j_1,\dots,j_p=1}^d ( \pa_{j_1} \cdots \pa_{j_p}u,0) \cdot \nabla_{(x,\xi)} (\pa_{j_1}\cdots\pa_{j_p} \Delta_\gamma^{\ell_2}\K)\\
&\quad + \sum_{\substack{\ell_1+p = m+1\\ p\ge 1}}\frac{m!}{\ell_1!(p-1)!} 2^{p}\sum_{j_1,\dots,j_p=1}^d (\Delta_x^{\ell_1} \pa_{j_1} \cdots \pa_{j_p}u,0) \cdot \nabla_{(x,\xi)} (\pa_{j_1}\cdots\pa_{j_p} \K)\\
&\quad - 2^{m+1} \sum_{j_1,\dots,j_{m+1}=1}^d ( \pa_{j_1} \cdots \pa_{j_{m+1}}u,0) \cdot \nabla_{(x,\xi)} (\pa_{j_1}\cdots\pa_{j_{m+1}} \K)\\
&= \sum_{\substack{\ell_2+p = m+1\\ p\ge 1}}\frac{m!}{\ell_2!(p-1)!} 2^{p}\sum_{j_1,\dots,j_p=1}^d ( \pa_{j_1} \cdots \pa_{j_p}u,0) \cdot \nabla_{(x,\xi)} (\pa_{j_1}\cdots\pa_{j_p} \Delta_\gamma^{\ell_2}\K)\\
&\quad + \sum_{\substack{\ell_1+p = m+1\\ p\ge 1, \ell_1\ge1}}\frac{m!}{\ell_1!(p-1)!} 2^{p}\sum_{j_1,\dots,j_p=1}^d (\Delta_x^{\ell_1} \pa_{j_1} \cdots \pa_{j_p}u,0) \cdot \nabla_{(x,\xi)} (\pa_{j_1}\cdots\pa_{j_p} \K),
\end{align*}
and
\begin{align*}
\sfI_4&= \sum_{\substack{\ell_2+p = m+1\\ \ell_2\ge 1}}\frac{m!}{(\ell_2-1)!p!} 2^p\sum_{j_1,\dots,j_p=1}^d ( \pa_{j_1} \cdots \pa_{j_p}u,0) \cdot \nabla_{(x,\xi)} (\pa_{j_1}\cdots\pa_{j_p} \Delta_\gamma^{\ell_2}\K)\\
&\quad +\sum_{\substack{\ell_1+\ell_2 = m+1\\ \ell_2\ge 1}}\frac{m!}{\ell_1!(\ell_2-1)!}  (\Delta_x^{\ell_1} u,0) \cdot \nabla_{(x,\xi)} ( \Delta_\gamma^{\ell_2}\K) - (u,0)\cdot\nabla_{(x,\xi)}\Delta_\gamma^{m+1}\K\\
&= \sum_{\substack{\ell_2+p = m+1\\ \ell_2\ge 1}}\frac{m!}{(\ell_2-1)!p!} 2^p\sum_{j_1,\dots,j_p=1}^d ( \pa_{j_1} \cdots \pa_{j_p}u,0) \cdot \nabla_{(x,\xi)} (\pa_{j_1}\cdots\pa_{j_p} \Delta_\gamma^{\ell_2}\K)\\
&\quad +\sum_{\substack{\ell_1+\ell_2 = m+1\\ \ell_2\ge 1, \ell_1\ge1}}\frac{m!}{\ell_1!(\ell_2-1)!}  (\Delta_x^{\ell_1} u,0) \cdot \nabla_{(x,\xi)} ( \Delta_\gamma^{\ell_2}\K).
\end{align*}
Combining those estimates yields
\begin{align*}
\sfI_2 + \sfI_3 + \sfI_4 &= \sum_{\substack{\ell_1+p = m+1 \\ \ell_1\ge 1, p\ge 1}}\frac{(m+1)!}{\ell_1!p!} 2^p\sum_{j_1,\dots,j_p=1}^d (\Delta_x^{\ell_1} \pa_{j_1} \cdots \pa_{j_p}u,0) \cdot \nabla_{(x,\xi)} (\pa_{j_1}\cdots\pa_{j_p} \K)\\
&\quad + (\Delta_x^{(m+1)} u,0)\cdot\nabla_{(x,\xi)} \K\\
&\quad + \sum_{\substack{\ell_1+\ell_2 = m+1 \\ \ell_1\ge 1, \ell_2 \ge 1}}\frac{(m+1)!}{\ell_1!\ell_2!}  (\Delta_x^{\ell_1} u,0) \cdot \nabla_{(x,\xi)} \Delta_\gamma^{\ell_2}\K) \\
&\quad +  (u,0)\cdot\nabla_{(x,\xi)} \Delta_\gamma^m \K\\
&\quad +\sum_{\substack{\ell_2+p = m+1\\ \ell_2\ge1, p\ge 1}}\frac{(m+1)!}{\ell_2!p!} 2^{p}\sum_{j_1,\dots,j_p=1}^d ( \pa_{j_1} \cdots \pa_{j_p}u,0) \cdot \nabla_{(x,\xi)} (\pa_{j_1}\cdots\pa_{j_p} \Delta_\gamma^{\ell_2}\K)\\
&\quad + 2^{m+1} \sum_{j_1,\cdots,j_{m+1}=1}^d ( \pa_{j_1} \cdots \pa_{j_{m+1}}u,0) \cdot \nabla_{(x,\xi)} (\pa_{j_1}\cdots\pa_{j_{m+1}} \K)\\
&= \sum_{\substack{\ell_1+\ell_2+p = m+1\\ \ell_1\ell_2 p =0}}\frac{(m+1)!}{\ell_1!\ell_2!p!} 2^p\sum_{j_1,\dots,j_p=1}^d (\Delta_x^{\ell_1} \pa_{j_1} \cdots \pa_{j_p}u,0) \cdot \nabla_{(x,\xi)} (\pa_{j_1}\cdots\pa_{j_p} \Delta_\gamma^{\ell_2}\K).
\end{align*}
Therefore, we can conclude that $\sum_{i=1}^4 \sfI_i$ gives the desired result.

%%%%%%%%%%%%%%%%%%%%%%%%%%%%%%%%%%%%%%%%%%%%%%%
%
%
%
%
%
%
%%%%%%%%%%%%%%%%%%%%%%%%%%%%%%%%%%%%%%%%%%%%%%%

\end{document}